\documentclass[a4paper,12pt]{article} 
\usepackage[T1]{fontenc}
\usepackage[latin2]{inputenc}
\usepackage[a4paper,left=2cm,right=2cm,top=2cm,bottom=2cm]{geometry}
\usepackage {times,authblk,url} 
\usepackage {amsmath}
\usepackage {amsfonts}
\usepackage {amssymb}
\usepackage {bm}
\usepackage{graphicx}
\usepackage {color}

\newtheorem{theorem}{Theorem}[section]

\newtheorem{lemma}[theorem]{Lemma}

\def\Box{\raisebox{3pt}{\framebox{\hbox to 3pt{\vbox to 3pt{}}}}}
\newenvironment{proof}{\medskip \noindent{\sc Proof:}}{\quad$\Box$\par\medskip} 
\newenvironment{proofof}[1]{\medskip \noindent{\sc Proof of #1:}}{\quad$\Box$\par\medskip} 
\newenvironment{definition}[1][Definition]{\begin{trivlist}
\item[\hskip \labelsep {\bfseries #1}\stepcounter{theorem}]}{\end{trivlist}}
\newtheorem{corollary}[theorem]{Corollary}
\newenvironment{statement2}[1][Claim]{\begin{trivlist}
\item[\hskip \labelsep {\bfseries #1}\stepcounter{theorem}]}{\end{trivlist}}

\newtheorem{prop}[theorem]{Proposition}
\setcounter{tocdepth}{1}
\newcommand{\kisnyil}{\text{$\to$}}
\DeclareMathOperator{\opt}{opt}

\title{The Optimal Rubbling Number of Ladders, Prisms and M\"obius-ladders}
\author[1]{Gyula Y. Katona\thanks{kiskat@cs.bme.hu}}
\author[2]{L\'aszl\'o F. Papp\thanks{lazsa88@gmail.com}}
\affil[1]{Department of Computer Science and
Information Theory,
  Budapest University of Technology and Economics /
	MTA-ELTE Numerical Analysis and Large Networks   Research Group
}
\affil[2]{Department of Computer Science and
Information Theory,
  Budapest University of Technology and Economics}

\begin{document}
\maketitle

\begin{abstract}
  A pebbling move on a graph removes two pebbles at a vertex and adds
  one pebble at an adjacent vertex. Rubbling is a version of pebbling
  where an additional move is allowed. In this new move, one pebble
  each is removed at vertices $v$ and $w$ adjacent to a vertex $u$,
  and an extra pebble is added at vertex $u$. A vertex is reachable
  from a pebble distribution if it is possible to move a pebble to
  that vertex using rubbling moves.  The optimal rubbling number is
  the smallest number $m$ needed to guarantee a pebble distribution of
  $m$ pebbles from which any vertex is reachable. We determine the
  optimal rubbling number of ladders ($P_n\square P_2$), prisms
  ($C_n\square P_2$) and M\"oblus-ladders.
\end{abstract}


\section{Introduction}

Graph pebbling has its origin in number theory. It is a model for the
transportation of resources. Starting with a pebble distribution on
the vertices of a simple connected graph, a \emph{pebbling move}
removes two pebbles from a vertex and adds one pebble at an adjacent
vertex. We can think of the pebbles as fuel containers. Then the loss
of the pebble during a move is the cost of transportation. A vertex is
called \emph{reachable} if a pebble can be moved to that vertex using
pebbling moves. There are several questions we can ask about
pebbling. One of them is: How can we place the smallest number of
pebbles such that every vertex is reachable (\emph{optimal pebbling
  number})? For a comprehensive list of references for the extensive
literature see the survey papers
\cite{Hurlbert_survey1,Hurlbert_survey2, Hurlbert_survey3}.

\emph{Graph rubbling} is an extension of graph pebbling. In this
version, we also allow a move that removes a pebble each from the
vertices $v$ and $w$ that are adjacent to a vertex $u$, and adds a
pebble at vertex $u$. The basic theory of rubbling and optimal
rubbling is developed in \cite{BelSie}. The rubbling number of
complete $m$-ary trees are studied in \cite{Danz}, while the rubbling
number of caterpillars are determined in \cite{Papp}. In \cite{KaSi}
the authors give upper and lower bounds for the rubbling number of
diameter 2 graphs.

In the present paper we determine the optimal rubbling number of
ladders ($P_n\square P_2$), prisms ($C_n\square P_2$) and
M\"oblus-ladders.

\section{Definitions}

Throughout the paper, let $G$ be a simple connected graph. We use the
notation $V(G)$ for the vertex set and $E(G)$ for the edge set.  A
\emph{pebble function} on a graph $G$ is a function
$p:V(G)\to\mathbb{Z}$ where $p(v)$ is the number of pebbles placed at
$v$. A \emph{pebble distribution} is a nonnegative pebble
function. The \emph{size} of a pebble distribution $p$ is the total
number of pebbles $\sum_{v\in V(G)}p(v)$. If $H$ is a subgraph of $G$, then $p(H)=\sum_{v\in V(H)}p(v)$.  We say that a vertex $v$ is
\emph{occupied} if $p(v)>1$, else it is \emph{unoccupied}. 

Consider a pebble function $p$ on the graph $G$. If $\{v,u\}\in E(G)$
then the \emph{pebbling move} $(v,v\kisnyil u)$ removes two pebbles at
vertex $v$, and adds one pebble at vertex $u$ to create a new pebble
function $p'$, so $p'(v)=p(v)-2$ and $p'(u)=p(u)+1$. 
If $\{w,u\}\in E(G)$ and $v\not=w$, then the \emph{strict rubbling
move} $(v,w\kisnyil u)$ removes one pebble each at vertices $v$ and $w$,
and adds one pebble at vertex $u$ to create a new pebble function
$p'$, so $p'(v)=p(v)-1$, $p'(w)=p(w)-1$   and $p'(u)=p(u)+1$.

A \emph{rubbling move is} either a pebbling move or a strict rubbling
move. A \emph{rubbling sequence} is a finite sequence $T=(t_{1},\ldots,t_{k})$
of rubbling moves. The pebble function obtained from the pebble function
$p$ after applying the moves in $T$ is denoted by $p_{T}$. The concatenation
of the rubbling sequences $R=(r_{1},\ldots,r_{k})$ and $S=(s_{1},\ldots,s_{l})$
is denoted by $RS=(r_{1},\ldots,r_{k},s_{1},\ldots,s_{l})$.

A rubbling sequence $T$ is \emph{executable} from the pebble distribution
$p$ if $p_{(t_{1},\ldots,t_{i})}$ is nonnegative for all $i$. A
vertex $v$ of $G$ is \emph{reachable} from the pebble distribution
$p$ if there is an executable rubbling sequence $T$ such that $p_{T}(v)\ge1$.
$p$ is a solvable distribution when each vertex is
reachable. Correspondingly, $v$ is \textit{$k$-reachable} under $p$ if
there is an executable $T$, that $p_T(v)\geq k$, and $p$ is
\textit{k-solvable} when every vertex is $k$-reachable. An $H$ subgraph
is \textit{$k$-reachable} if there is an executable rubbling sequence
$T$ such that $p_T(H)=\sum_{v\in V(H)}p_T(v)\geq k$. We say that vertices $u$ and $v$ are
\textit{independently reachable} if there is an executable rubbling
sequence $T$ such that $p_T(u)\geq 1$ and $p_T(v)\geq 1$.

The \emph{optimal rubbling number} $\varrho_{\opt}(G)$ of a graph
$G$ is the size of a distribution with the least number of pebbles
from which every vertex is reachable. A solvable pebbling distribution is \emph{optimal} if its size equals to the optimal rubbling number.

Let $G$ and $H$ be simple graphs. Then the \textit{Cartesian
    product} of graphs $G$ and $H$ is the graph whose vertex set is
  $V(G)\times V(H)$ and $(g,h)$ is adjacent to $(g',h')$ if and only
  if $g=g'$ and $(h,h')\in E(H)$ or if $h=h'$ and $(g,g')\in E(G)$. This graph
  is denoted by $G\square H$.

  $P_n$ and $C_n$ denotes the path and the cycle containing $n$
  distinct vertices, respectively. We call $P_n\square P_2$ a \emph{ladder}
  and $C_n\square P_2$ a \emph{prism}. It is clear that the prism can be
  obtained from the ladder by joining the 4 endvertices by two edges
  to form two vertex disjoint $C_n$ subgraphs. If the four endvertices
  are joined by two new edges in a switched way to get a $C_{2n}$
  subgraph, then a \emph{M\"obius-ladder} is obtained.

  We imagine the $P_n\square P_2$ ladder laid horizontally, so there
  is an upper $P_n$ path, and a lower $P_n$ path, which are connected
  by ``parallel'' edges, called \emph{rungs} of the ladder. Vertices
  on the upper path will be usually denoted by $v_i$, while vertices of
  the lower path by $w_i$. Also, if $A$ is a rung (a vertical edge of
  the graph), then $\overline{A}$ denotes the upper, and
  $\underline{A}$ the lower endvertex of this rung. This arrangement
  also defines a natural left and right direction on the horizontal
  paths, and between the rungs.

\section{Optimal rubbling number of the ladder}

In this section we give a formula for the optimal rubbling number of ladders: 

\begin{theorem}
  Let $n=3k+r$ such that $0\leq r<3$ and $n,r\in \mathbb{N}$, so
  $k=\left\lfloor\frac{n}{3}\right\rfloor$. Then
$$\varrho_{\opt}(P_n \square P_2)=\left \{ \begin{array}{ll}
1+2k & \text{if}\ r=0,\\
2+2k& \text{if}\ r=1,\\
2+2k & \text{if}\ r=2.
 \end{array}\right.$$
\label{letratetel}
\end{theorem}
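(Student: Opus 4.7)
My plan is to prove the theorem in two matching halves: an upper bound by explicit construction, and a lower bound by a weight/averaging argument combined with a boundary case analysis.

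For the upper bound, my construction places pebbles roughly every three rungs, alternating between the top and bottom paths. For $n = 3k$ I would place one pebble on $v_1$ and two pebbles on each vertex of the alternating sequence $w_3, v_6, w_9, v_{12}, \ldots$ (at multiples of three, ending at position $3k$), totalling $1 + 2k$. For $r \in \{1, 2\}$ I use the same backbone and add one extra pebble near the right end, giving $2 + 2k$. Solvability is verified by case analysis on the target $t$: a vertex on (or directly adjacent to) a rung with a pebble pair is reached by a pebbling move; an off-diagonal vertex between two consecutive pebble pairs is reached by producing one pebble from each pair (or, near the left end, using the pebble at $v_1$) and combining them through their common neighbour via a strict rubbling move.

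For the lower bound, I argue by contradiction: suppose $p$ is solvable with $|p|$ strictly less than the claim. The main tool is the standard weight inequality for rubbling: if $t$ is reachable from $p$, then $\sum_v 2^{-d(v,t)} p(v) \geq 1$. A single application is loose, so I plan to aggregate it over a well-chosen family of targets (for example one per three-rung block), together with a local observation that a rung whose distance-$1$ neighbourhood contains no pebbles forces at least two pebble-units to be supplied from outside. These ingredients should force pebbles to appear with density at least $\tfrac{2}{3}$ per rung, accounting for the asymptotic term $2k$.

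The main obstacle is pinning down the additive constant separating $1 + 2k$ (case $r = 0$) from $2 + 2k$ (cases $r = 1, 2$). The density part is routine; the extra constant depends on what happens at the two ends of the ladder, where the distance function behaves asymmetrically and short-range effects dominate. I therefore expect the lower bound proof to split into a uniform interior argument plus a short boundary case analysis on the pebble pattern in the first and last two rungs, organised either as an explicit averaging computation or as an induction on $n$ whose inductive step removes three rungs and two pebbles while preserving solvability.
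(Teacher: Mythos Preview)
Your upper bound is essentially the paper's construction and is fine. The lower bound, however, has a genuine gap.

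First, the weight aggregation you propose is too weak to yield density $2/3$. In $P_n\square P_2$, an interior vertex $v$ satisfies
\[
\sum_{t}2^{-d(v,t)}\le 1+\tfrac{3}{2}+4\sum_{d\ge 2}2^{-d}=\tfrac{9}{2},
\]
so summing the reachability inequality $\sum_v 2^{-d(v,t)}p(v)\ge 1$ over all $2n$ targets only gives $|p|\ge 4n/9$, well short of $2n/3$. Restricting to one target per three-rung block does not help: each pebble still contributes a geometric series of weights to the chosen targets, and the resulting bound stays in the same range. Your ``two pebble-units from outside'' observation is a local strengthening, but it does not interact with the global summation in a way that recovers the missing factor $3/2$; you would need a genuinely new counting scheme, and none is indicated.

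Second, your fallback---induction removing three rungs and two pebbles---is exactly the paper's method, but you have mislocated the difficulty. It is \emph{not} a boundary phenomenon that can be handled by examining the first and last two rungs. The hard part is entirely interior: given an arbitrary solvable distribution, you must choose a $P_3\square P_2$ block $R$, delete it, glue the ends, redistribute $p(R)-2$ pebbles onto the cut rungs, and prove the result is still solvable. Rubbling sequences on a ladder can weave back and forth across the cut in complicated ways, so one cannot simply ``replay'' old sequences on the reduced graph. The paper handles this by introducing a normal form for rubbling sequences (the \emph{$A$-biased} sequences), reducing solvability to four inequalities comparing left/right weight functions at the cut rungs, and then performing a case analysis with over twenty essentially different pebble patterns on $R$ (with several exceptional patterns requiring a shifted block or a separate argument). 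None of this machinery is hinted at in your outline, and without it the inductive step does not go through.
\medskip
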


In the rest of the section we are going to prove the above
theorem. The proof is fairly long and complex, so it will be divided
to several lemmas. First, we prove that the function given in the
theorem is an upper bound, by giving solvable distributions.

\begin{lemma}
  Let $n=3k+r$ such that $0\leq r<3$ and $n,r\in \mathbb{N}$, so
  $k=\left\lfloor\frac{n}{3}\right\rfloor$.
$$\varrho_{\opt}(P_n \square P_2)\leq \left \{ \begin{array}{ll}
1+2k & if\ r=0,\\
2+2k& if\ r=1,\\
2+2k & if\ r=2.
 \end{array}\right.$$
\label{letratetelfelso}
\end{lemma}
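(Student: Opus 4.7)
The plan is to prove this upper bound constructively: for each residue $r$ I will exhibit a distribution of the claimed size and verify that every vertex is reachable from it. A single uniform family of distributions covers all three cases. Place one pebble at $v_1$; for each $i = 1, \ldots, k$ place two pebbles at $w_{3i}$ if $i$ is odd and at $v_{3i}$ if $i$ is even; and when $r > 0$, place one further pebble at column $n$ in the row opposite to that of the $k$-th doubly-occupied rung (or opposite to $v_1$ when $k = 0$). A direct count gives sizes $1 + 2k$, $2 + 2k$ and $2 + 2k$ for $r = 0, 1, 2$, matching the claimed bounds.

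To check solvability I plan to partition the vertex set into \emph{interior slices}, consisting of the two columns $3i + 1$ and $3i + 2$ strictly between two consecutive 2-pebble columns at $3i$ and $3(i+1)$, together with the two \emph{end zones} near columns $1$ and $n$. The crucial structural feature is that consecutive 2-pebble columns sit on opposite rows. Hence inside each interior slice a single pebbling move from each of the two bracketing columns deposits one pebble on each of two vertices that are common neighbours of the intended target; a single strict rubbling move then delivers a pebble to the target. Vertices in a 2-pebble column itself and their rung partners are reached trivially.

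The end zones work by the same recipe, with the single pebble at $v_1$ (or at column $n$) playing the role of one of the two auxiliary neighbours. For example, on the left I would reach $v_2$ by pebbling $w_3 \kisnyil v_3$ and then applying strict rubbling $(v_1, v_3 \kisnyil v_2)$, and $w_1$ by pebbling $w_3 \kisnyil w_2$ and applying $(v_1, w_2 \kisnyil w_1)$. When $r \ne 0$ the right end is handled by the mirror-image argument: the last 2-pebble column at $3k$ and the extra pebble at column $n$ again lie on opposite rows, and between them they supply the required auxiliary pebbles for each remaining vertex.

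The main obstacle I anticipate is simply the amount of case analysis, not any individual move: inside each slice the ``direct'' row and the ``rubbled'' row swap with the parity of $i$, and at the right end the row of the extra pebble depends jointly on $r$ and on the parity of $k$. Each individual verification reduces to identifying a pair of common neighbours of the target and performing one pebble move from each adjacent 2-pebble column followed by a strict rubbling move. The tiny cases $n \in \{1, 2\}$, where $k = 0$ and the distribution consists of only the two end pebbles, should be disposed of by a short direct check before the general interior-slice pattern takes over.
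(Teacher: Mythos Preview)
Your proposal is correct and follows essentially the same approach as the paper: exhibit an explicit distribution of the claimed size and verify solvability. The paper's proof consists of a single sentence pointing to a figure with the optimal distributions; your alternating pattern of a single pebble at column $1$, two pebbles at every third column on alternating rows, and (when $r>0$) a single pebble at column $n$ on the opposite row is exactly the kind of construction such a figure encodes, and your slice-by-slice verification simply spells out what the paper leaves to the reader.
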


\begin{proof}
A solvable distribution with adequate size is shown in Fig. \ref{felso} for each case.
\end{proof}

\begin{figure}[htb]
\centering
\includegraphics[scale=0.6]{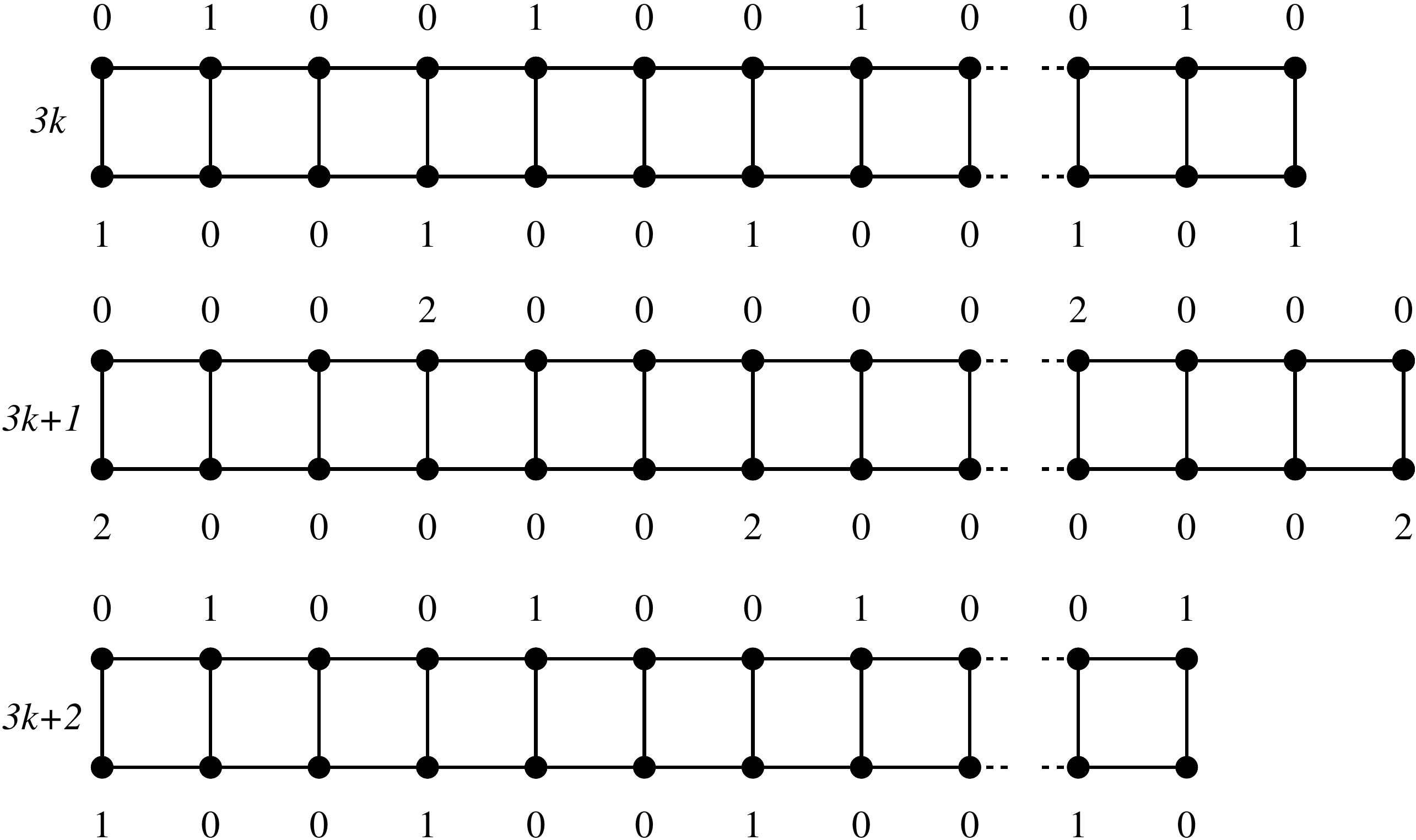}
\caption{Optimal distributions.}
\label{felso}
\end{figure}

\label{bizkezdet}
Now we need to prove that the function is a lower bound as well. This
part is unfortunately much harder. Before we start the rigorous proof,
a summary of the proof is given. Then the necessary definitions and
proofs of several Lemmas will follow.\medskip

\noindent\textsc{Summary of the proof:} 
We prove by induction on $n$. First we deal with the base cases in
Lemma \ref{lem:optimalis}.  For the induction step, consider an 
optimal distribution $p$ on $P_n \square P_2$. Choose an appropriate
$R=P_3\square P_2$ subgraph which contains maximum number of pebbles,
delete the vertices of $R$ and reconnect the remaining two parts to
obtain $G^R=P_{n-3} \square P_2$, called the \emph{reduced graph}, see
Fig. \ref{terv}.
\begin{figure}[htb]
\centering
\scalebox{0.6}{\input{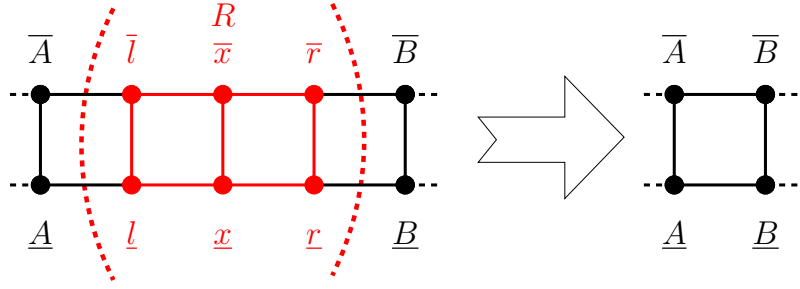}}
\caption{Deleting an $R=P_3\square P_2$ subgraph lying between the two dashed brackets.}
\label{terv}
\end{figure} 

Now construct a solvable $p^R$ distribution for the new $P_{n-3}\square
P_2$ graph in the following way: $p$ induces a distribution on the
vertices which we have not deleted. In most of the cases we simply
place $p(v)$ pebbles to all $v\in V(G)\backslash V(R)$, (i.e. do not
change the original distribution), in some other cases we apply a
simple operation on the original distribution.
 Finally, distribute and place $p(R)-2$ pebbles
at vertices $\overline{A}$, $\underline{A}$, $\overline{B}$ and
$\underline{B}$ in an appropriate way so that the new distribution on
$P_{n-3} \square P_2$ is solvable. Our aim is to show that it is
always possible to find such a new distribution. This will be proved in
several lemmas. These will imply
$$\varrho_{\opt}(P_n\square P_2)\geq \varrho_{\opt}(P_{n-3}\square P_2)+2,$$
which implies the theorem.
The most challenging part of the proof is to show that  the reduced distribution is solvable on the reduced graph. The obvious idea to do this is to show that it is possible to change any rubbling sequence of the original graph such that it reaches the same vertices. However, there are way too many and very tricky rubbling sequences on the ladder (the path and the cycle was much easier in this sense). So first we show that for a solvable distribution and any vertex $v$ there exist a ``nice'' (so called $A$-biased) rubbling sequence that reaches $v$ (see Lemma~\ref{A-max}). Next we invent a new way to show that the reduced distribution remains solvable using the existence of these ``nice'' rubbling sequences (see  Corollary~\ref{p-kov}).

To complete the proof a detailed case analysis is needed to treat all the essentially different pebble distributions of the deleted subgraph $R$. Lemma~\ref{lem:legalabbnegy} show why this method is useful. The case when $R$ contains at least 4 pebbles is treated in Subsection~\ref{minnegy}, the cases when $R$ it contains three or two pebbles are handled in Subsections~\ref{maxharom} and \ref{maxketto}.
Finally, the proof is put together on page~\pageref{fobiz}.
\medskip

Now we start the detailed proof with some lemmas.

\begin{lemma}\label{lem:optimalis}
$$\varrho_{\opt}(P_2)=2$$
$$\varrho_{\opt}(P_2\square P_2)=2$$
$$\varrho_{\opt}(P_3\square P_2)=3$$
\end{lemma}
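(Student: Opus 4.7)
The plan is to prove each of the three equalities by matching upper and lower bounds. The upper bounds are obtained by exhibiting explicit solvable distributions. For $P_2$, place both pebbles on one endpoint; a single pebbling move reaches the other vertex. For $P_2 \square P_2 \cong C_4$, place one pebble on each of two antipodal vertices $u, v$; the two remaining vertices are common neighbors of $u$ and $v$, and each is reached by one strict rubbling move of the form $(u, v \kisnyil \cdot)$. For $P_3 \square P_2$, place $2$ pebbles on the middle upper vertex $v_2$ and $1$ on the middle lower vertex $w_2$; then $v_2, w_2$ are trivially reachable, pebbling moves from $v_2$ reach $v_1$ and $v_3$, and $w_1$ (symmetrically $w_3$) is reached by first pebbling $v_2 \kisnyil v_1$ and then applying the strict rubbling move $(v_1, w_2 \kisnyil w_1)$.

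For the lower bounds, the cases $\varrho_{\opt}(P_2) \geq 2$ and $\varrho_{\opt}(P_2 \square P_2) \geq 2$ are immediate: a size-$1$ distribution places its unique pebble at a single vertex and admits no rubbling move at all, so only that vertex is reachable, while both graphs contain more than one vertex.

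The hard part is the bound $\varrho_{\opt}(P_3 \square P_2) \geq 3$, which I would handle by a short case analysis on a hypothetical solvable distribution $p$ of size $2$. The key observation is that every rubbling move consumes two pebbles and produces one, so starting from only two pebbles at most a single move can ever be executed, and after it only one pebble remains on the board, precluding any further move. Consequently, if both pebbles are on a single vertex $u$, then the reachable set is exactly $\{u\} \cup N(u)$; and if the pebbles sit on distinct vertices $u, v$ (so no pebbling move is available), then the reachable set is $\{u, v\} \cup (N(u) \cap N(v))$. Because $\Delta(P_3 \square P_2) = 3$ and a direct inspection of the six neighborhoods shows that no two vertices share more than two common neighbors, the reachable set has size at most $4$, whereas $|V(P_3 \square P_2)| = 6$. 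Hence no size-$2$ distribution is solvable, contradicting the assumption and completing the lemma.
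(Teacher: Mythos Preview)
Your proof is correct and essentially follows the same approach as the paper: exhibit solvable distributions for the upper bounds and verify by direct inspection that no smaller distribution can be solvable. The paper's own proof merely points to a figure of optimal distributions and declares the verification ``an easy exercise,'' so your writeup is in fact more detailed than the original; in particular, your observation that from a size-$2$ distribution at most one rubbling move can ever be executed gives a clean and efficient way to dispatch the lower bound for $P_3\square P_2$.
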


\begin{proof}
  The optimal distributions are shown in Fig. \ref{optimalis}. It is
  an easy exercise to check that these distributions are optimal.
\end{proof}
\begin{figure}[htb]
\center
\includegraphics[scale=0.6]{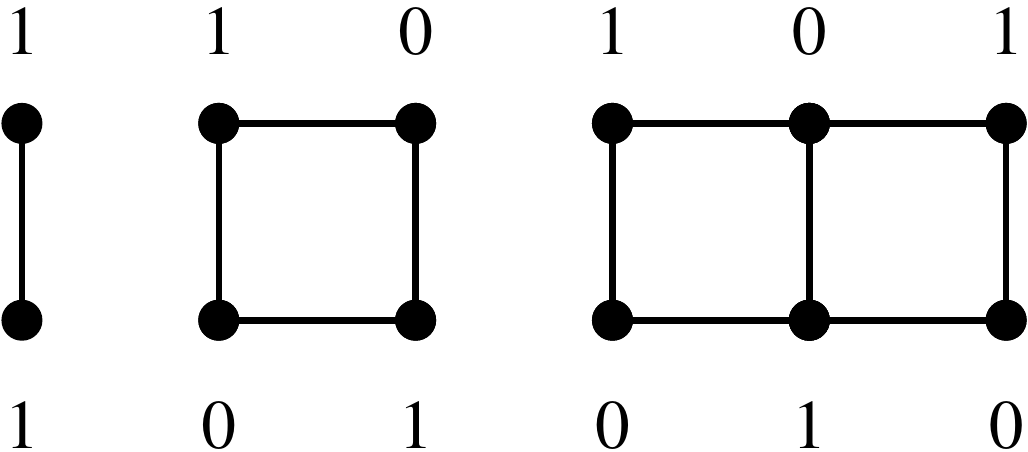}
\caption{Optimal distributions for $P_2$, $P_2\square P_2$ and $P_3\square P_2$.}\label{optimalis}
\end{figure}

Now we make some preparations to prove the lower bound. As mentioned
in the summary, we need to give a new distribution $p^R$ on the
reduced graph $G^R$. Next we give some properties that all such
distributions needs to satisfy.
\begin{definition}
  The distribution we need to construct on $G^R$ (mentioned on Page \pageref{bizkezdet} in the proof summary) is denoted by
  $p^R$, called \emph{reduced distribution}.  It needs to satisfy
  the following conditions:
\begin{itemize}
\item $p^R(v)=p(v)$ or $p^R(v)=p^R(w)$, if $v$ is not contained by rung $A$ or $B$ and $v$ and $w$ contained in the same rung.
\item $p^R(R)=p(R)$ if $R$ is a rung other than $A$ and $B$.
\item $p^R(w)\geq p(w)$, if $w$ is contained by rung $A$ or $B$.
\item $p^R(\overline{A})+p^R(\underline{A})+p^R(\overline{B})
+p^R(\underline{B})=p(\overline{A})+p(\underline{A})+p(\overline{B})
+p(\underline{B})+p(R)-2$.
\end{itemize}

\end{definition}

One of the tools that is used in the proof is the ``weight
argument''. This was introduced by Moews in \cite{Moews}, now it is
extended for our situation.
\begin{definition}
  Let $d(x,v)$ denote the distance between vertices $x$ and $v$,
  i.e.~ the length of the shortest path which connects them.  The
  \emph{weight-function} of a vertex $x$ with respect to pebble
  distribution $p$ is:
  \[w_{p}(x)=\sum_{v\in V(G)}\left(\frac{1}{2}\right)^{d(x,v)}p (v).\]
  The \emph{left weight-function}, denoted by $Lw_{p}(x)$, is a similar
  function, the difference is that the summation is taken only for
  vertices that do not lie to the right of $x$ (i.e. for vertices lying
  left and the other vertex of the rung containing $x$).  The
  \emph{right weight-function}, denoted by $Rw_{p}(x)$ is defined
  similarly.
\end{definition}

\begin{definition}
 Let $p$ be a distribution on the graph $G=P_n\square P_2$. Fix a
  vertex $v$ and delete every vertex  located to the right of it. We get a
  shorter $G'=P_{m}\square P_2$ graph, $m\leq n$, which does not contain vertices located to the right
  of $v$. Let $p'$ be a pebble distribution on $G'$ such that
  $p'(v)=p(v)$ for each vertex of $G'$. We say that $v$ is \textit{left
    $k$-reachable} in $G$ if it is $k$-reachable in  $G'$ under the
  distribution $p'$. \emph{Right $k$-reachability} is defined similarly.
  Let $L_p(v)$ (and $R_p(v)$) denote the maximum $k$ for which $v$ is
  left-$k$-reachable (right-$k$-reachable).
\end{definition}

The following two lemmas show the connection between the left (right)
weight-function and $L_p$ ($R_p$). These show us that we can
approximate $L_p$ with $Lw_p$ and $R_p$ with $Rw_p$. This will be used
on page \pageref{modified}.

\begin{lemma}\label{weight-lemma}
 $L_p(v)\leq Lw_p(v)$
and $R_p(v)\leq Rw_p(v)$
hold for any vertex $v$.
\end{lemma}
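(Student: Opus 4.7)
The plan is to adapt Moews's classical weight argument to the rubbling setting and to the one-sided weight functions defined for the ladder. I would prove both inequalities by the same method, so I describe the argument for $L_p(v)\le Lw_p(v)$ and note that the right-handed case is symmetric.

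First I would establish the key monotonicity fact: for any vertex $x$, the weight $w_p(x)$ cannot increase under a rubbling move. I would check the two types of moves separately. For a pebbling move $(u,u\kisnyil u')$, writing $d=d(x,u)$, the change in weight is $\left(\tfrac12\right)^{d(x,u')}-2\left(\tfrac12\right)^{d}$; since $u$ and $u'$ are adjacent, $d(x,u')\in\{d-1,d,d+1\}$, and a one-line case check shows this quantity is always $\le 0$ (it is exactly $0$ only when $d(x,u')=d-1$). For a strict rubbling move $(v,w\kisnyil u)$, the change is $\left(\tfrac12\right)^{d(x,u)}-\left(\tfrac12\right)^{d(x,v)}-\left(\tfrac12\right)^{d(x,w)}$; since both $v$ and $w$ are neighbors of $u$, each of $d(x,v),d(x,w)$ is one of $d(x,u)\pm 1$ or $d(x,u)$, and in every combination the sum of the two negative terms dominates, with equality in the worst case when both neighbors lie one step further from $x$ than $u$.

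Next I would move to the subgraph $G'=P_m\square P_2$ obtained by deleting the vertices strictly to the right of the column of $v$, with the induced distribution $p'$. The crucial metric observation is that for any two vertices $u_1,u_2$ in $G'$, their distance in $G'$ equals their distance in $G$: a shortest $u_1u_2$-path in the ladder never needs to enter a column to the right of $v$ when both endpoints lie in columns at or left of $v$'s column. Consequently $w_{p'}(v)$ computed in $G'$ coincides with $Lw_p(v)$ computed in $G$.

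Finally I would combine the two. By definition of $L_p(v)$, there is an executable rubbling sequence $T$ on $G'$ with $p'_T(v)\ge L_p(v)$. Applying the monotonicity iteratively along $T$ gives $w_{p'_T}(v)\le w_{p'}(v)$, while $p'_T(v)=\left(\tfrac12\right)^{0}p'_T(v)\le w_{p'_T}(v)$ because all other terms in $w_{p'_T}(v)$ are nonnegative. Chaining these yields $L_p(v)\le p'_T(v)\le w_{p'_T}(v)\le w_{p'}(v)=Lw_p(v)$, as required. The symmetric argument on the subgraph to the right of $v$ gives $R_p(v)\le Rw_p(v)$. The only step that requires care is the strict rubbling case in the monotonicity check, since there one must verify that the two possible gains of weight (when the new pebble lands closer to $x$) are genuinely paid for by the losses at $v$ and $w$; I expect this to be the main, though still routine, obstacle.
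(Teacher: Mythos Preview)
Your proposal is correct and follows essentially the same route as the paper: establish that rubbling moves cannot increase the weight centered at $v$, then chain $L_p(v)\le p'_T(v)\le w_{p'_T}(v)\le w_{p'}(v)=Lw_p(v)$. The paper simply asserts the monotonicity of $Lw_p(v)$ under rubbling moves as ``clear'' and writes the same inequality chain in one line; your version supplies the case-check for pebbling and strict rubbling moves and makes explicit the passage to the truncated subgraph $G'$ (together with the observation that distances in $G'$ agree with those in $G$), which is exactly what underlies the paper's compressed argument.
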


\begin{proof}
It is clear that a rubbling step cannot increase the value of the left (right) weight-function at $v$. However, if a sequence $T$ of rubbling steps moved $k$ pebbles  to $v$ from the left, then \[k\leq Lw_{p_T}(v)\leq Lw_p(v)\] holds, proving the first claim. The second claim can be proved similarly. 
\end{proof}

In fact, a stronger statement can be proved.

\begin{lemma}
\label{egeszresz}
$L_{p}(v)=\left\lfloor Lw_{p}(v)\right\rfloor$ and
$R_{p}(v)=\left\lfloor Rw_{p}(v)\right\rfloor$ hold for any vertex $v$.
\end{lemma}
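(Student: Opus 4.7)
The plan is to prove the nontrivial direction $L_p(v)\geq \lfloor Lw_p(v)\rfloor$; the reverse inequality is Lemma~\ref{weight-lemma} combined with the integrality of $L_p(v)$, and the right-weight statement follows by horizontal symmetry. I would proceed by induction on the total number of pebbles $|p|=\sum_u p(u)$ in the sub-ladder $G'$; the case $|p|=0$ is immediate. For the inductive step, index the columns of $G'$ from the right so that $v$ sits on column $1$, and assume by vertical symmetry that $v=(1,U)$. Then $d(v,(i,U))=i-1$ and $d(v,(i,L))=i$, so the vertices at distance exactly $d\geq 1$ from $v$ are $(d,L)$ and $(d+1,U)$, whose common neighbor $(d,U)$ lies at distance $d-1$ (and equals $v$ when $d=1$).

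The core idea is to find, whenever possible, a single rubbling move that preserves $Lw_p(v)$ while strictly decreasing $|p|$; the induction hypothesis then completes the step. Two such moves suffice. First, if some $u\neq v$ satisfies $p(u)\geq 2$, then the pebbling move from $u$ toward $v$ along a shortest path changes the weight by $-2\cdot 2^{-d(v,u)}+2^{-(d(v,u)-1)}=0$. Second, if every $u\neq v$ has $p(u)\leq 1$ but for some $d\in\{1,\dots,m-1\}$ both $(d,L)$ and $(d+1,U)$ carry a pebble, the strict rubbling move $((d,L),(d+1,U)\kisnyil(d,U))$ changes the weight by $-2\cdot 2^{-d}+2^{-(d-1)}=0$. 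In either case $|p|$ drops by one, so the induction proceeds.

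When neither reduction is available — the sparse regime — every $u\neq v$ has $p(u)\leq 1$ and each distance level $d\geq 1$ contributes at most one pebble. Hence the weight off $v$ is at most $\sum_{d=1}^{m}2^{-d}<1$, giving $\lfloor Lw_p(v)\rfloor=p(v)$, which is trivially witnessed by the $p(v)$ pebbles already at $v$. The main obstacle is pinpointing this wedge-shaped strict rubbling move: it is the unique move on two equidistant singletons that is neither weight-wasting nor pebbling, and the observation that the only configurations obstructing it are those whose residual weight is sub-integral is exactly what allows the induction to close without a further case split.
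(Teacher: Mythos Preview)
Your argument is correct and mirrors the paper's proof closely. The paper packages the same weight-preserving moves into a single \emph{greedy rubbling} procedure---process distance levels from largest to smallest, at each level moving as many pebbles as possible from the two equidistant vertices to their closer common neighbor---which leaves at most one pebble per positive distance level and yields the identical sub-integral bound $\sum_{d\geq 1}2^{-d}<1$; your induction on $|p|$ simply performs the same moves one at a time.
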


\begin{proof}
We only verify the first claim; the second claim can be handled similarly.
In the following, we only consider pebbles and vertices that are not on the right hand side of $v$.  

There are at most two vertices $w$ and $w'$ in the graph whose
distance from $v$ is $d$ and they are not located to the left of
$v$ (if $d$ is too large, then one or zero). Vertices $w$ and $w'$ have at most two common neighbours. Let $u$
be the one which is closest to $v$. Move as many pebbles as possible
from $w$ and $w'$ to this neighbour $u$ by rubbling moves. Use the
same moves for every $d$ in decreasing order, to obtain a distribution
$p'$. Let us call this \emph{greedy rubbling}. As a result, in $p'$,
the two vertices at distance $d>0$ from $v$ contains at most one
pebble together. It is easy to see that
$Lw_{p'}(v)=Lw_{p}(v)$. Therefore
\begin{align*}
  Lw_{p}(v)&=Lw_{p'}(v)=\sum_{x\in V(G)}\frac{1}{2^{d(x,v)}}p'(x)=\\
  &=p'(v)+\sum_{x\neq v}\frac{1}{2^{d(x,v)}}p'(x)\leq
  p'(v)+\sum_{d=1}^k \frac{1}{2^d}<L_{p'}(v)+1=L_{p}(v)+1.
\end{align*}
By Lemma \ref{weight-lemma} and the fact that $L_{p}(v)$ is an integer, the claim is proved.
\end{proof}

\begin{definition}
  Let $p$ be a distribution on $P_n\square P_2$ and let $A$ be a
  rung. An executable rubbling sequence $S$ is called
  \textit{$A$-biased} if each rubbling move that takes a pebble from
  $A$ to another rung only use pebbles from the same vertex of
  $A$. So, if $S$ is $A$-biased and $(\underline{A},v\kisnyil w)\in S$
  where $w \notin V(A)$, then $(\overline{A},v'\kisnyil w') \notin S$
  except in the case when $w'=\underline{A}$. This also holds if we swap
  $\overline{A}$ and $\underline{A}$.
\end{definition}

We invent this notion for the following reason.
Assume that a vertex $v$ located to the left from $A$ is reachable by
an $A$-biased sequence $S$ under distribution $p$. Furthermore, assume
that all moves of $S$ taking a pebble from $A$ to another rung use
only pebbles from $\underline{A}$. Let $q$ be a modification of $p$
such that $q(u)=p(u)$ where $u\neq \underline{A}$ and
$q(\underline{A})=R_p(\underline{A})$. We can make a new sequence
which acts only on $\underline{A}$ and vertices to the left from $A$
and still reaches $v$ under $q$. Finally, if we modify the graph or
the distribution on the right hand side of $A$, then $v$ remains
reachable if $\underline{A}$ remains right
$R_p(\underline{A})$-reachable.

\par
It will be done in several steps. In Lemma \ref{kiveteleslemma} we
show that all but one vertex lying to the left of $A$ is reachable by
an $A$-biased sequence. In the next lemma we show that we can modify
$p$ to $q$ such that their size is the same, each vertex located to
the left of $A$ is reachable by an $A$-biased sequence and every vertex not located
 to the left of $A$ is remains
reachable. 
The first corollary shows that we can do it for two different rungs
simultaneously if we consider opposite orientations of the graph. In
Corollary \ref{p-kov} the notion of $A$-biased sequence pays off;
we get four inequalities which are sufficient conditions for the
reachability of any vertex not contained in rung $A$ or $B$.  This
makes the rest of the proof substantially easier.

\begin{lemma} \ 
\begin{itemize}
\item[i)] When $S$ is an $A$-biased sequence, $T$ is a sequence which does not contain a move acting on rung $A$, and $ST$ is executable, then $ST$ is $A$-biased.  
\label{A-maxtriv}
\item[ii)]
The greedy rubbling sequence is A-biased.
\end{itemize}
\end{lemma}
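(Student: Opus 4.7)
My plan is to handle (i) by a direct appeal to the definition and (ii) by inspecting the structure of the greedy rubbling moves.

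For (i), I would first observe that being $A$-biased is a property that only constrains those moves whose removal occurs at a vertex of rung $A$ and whose addition occurs at a vertex outside $V(A)$ (the ``out-of-$A$'' moves). By hypothesis, every move in $T$ touches neither $\overline{A}$ nor $\underline{A}$, so the collection of out-of-$A$ moves appearing in $ST$ coincides exactly with the corresponding collection inside $S$. Since $S$ is $A$-biased, all these moves draw their pebble from $A$ at a common vertex, and therefore the same is true when the moves are viewed as a sub-sequence of $ST$. Executability is given in the hypothesis, so nothing further is needed.

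For (ii), I would fix an arbitrary rung $A$ at column $j$ and a target vertex $v=v_{i}$, and recall the geometry of greedy rubbling on the left half of the ladder: for each $d\ge 1$ the at-most-two non-right vertices at distance $d$ from $v$ are $v_{i-d}$ on the top row and $w_{i-(d-1)}$ on the bottom row, and their common neighbor closest to $v$ is $u_{d}=v_{i-(d-1)}$. Every greedy move at level $d$ is therefore one of: a pebbling move $(v_{i-d},v_{i-d}\kisnyil v_{i-(d-1)})$, a pebbling move $(w_{i-(d-1)},w_{i-(d-1)}\kisnyil v_{i-(d-1)})$, or a strict rubbling $(v_{i-d},w_{i-(d-1)}\kisnyil v_{i-(d-1)})$. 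I would then check which of these are out-of-$A$ moves when $A$ sits at column $j$. The middle type has both source and target at column $i-(d-1)$, so it is a within-rung move and never relevant for the bias. The first and third types have their top source $v_{i-d}$ in column $i-d$ and their target in column $i-(d-1)$; when $j=i-d$ these are out-of-$A$ moves, and in both cases the only source lying on $A$ is $\overline{A}=v_{i-d}$. Hence every out-of-$A$ move at $A$ uses the same vertex $\overline{A}$, which is precisely the $A$-bias condition. The right-weight greedy procedure is symmetric.

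The one slightly delicate point I anticipate, and what I would flag as the main obstacle, is keeping track of the bottom vertex $w_{j}$: it does appear as a source in the greedy sequence (at level $d+1$, where $j=i-d$), but the corresponding target is $v_{i-d}$, which lies on rung $A$ itself, so that move is within-rung and does not enter the bias condition. Once this observation is explicit, part (ii) reduces to the case analysis above, and together with (i) the lemma is established.
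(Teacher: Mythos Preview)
Your argument is correct and follows the same approach as the paper, which simply states that both parts are direct consequences of the definition of an $A$-biased sequence. You have spelled out the verification in detail---tracking the out-of-$A$ moves for (i) and the column structure of the greedy moves for (ii)---and your observation that $w_j$ only ever appears as a source with target $v_j\in V(A)$ is exactly the point that makes (ii) go through.
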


\begin{proof} These statements are direct consequences of the definition of $A$-biased sequences.
\end{proof}

\begin{lemma}
Let $p$ be a solvable distribution of $P_n\square P_2$. Let $A$ be an arbitrary rung in the graph, and let $S_s$ denote an $A$-biased sequence that reaches vertex $s$. Such an $S_s$ exists for all but one vertex located left from $A$.
Furthermore, if there is an exception then we must see the distribution shown in Fig. \ref{exceptionfig} during the execution of the rubbling sequence reaching $s$. (We have assumed on the figure that the exception vertex is $v_i$. 
This exception vertex can be different for different rungs.)
\label{kiveteleslemma}
\end{lemma}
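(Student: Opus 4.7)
\medskip
\noindent\textbf{Proof plan.} Since $p$ is solvable, for any vertex $s$ left of $A$ pick an executable rubbling sequence $T$ with $p_{T}(s)\ge 1$; the plan is to modify $T$ into an $A$-biased sequence that still reaches $s$. The starting observation is that in the ladder the two endvertices $\overline{A}$ and $\underline{A}$ share no common neighbour outside $A$, so every rubbling move that takes a pebble from $A$ to another rung must use exactly one of $\overline{A},\underline{A}$ as a source. Hence making $T$ $A$-biased is the same as forcing all outgoing-from-$A$ moves to use a single chosen side of $A$. First I would prune $T$ by deleting every outgoing-from-$A$ move that pushes a pebble to the right: such a pebble cannot reach $s$ without re-crossing $A$ at a net loss, so its removal cannot harm reachability of $s$. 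After pruning, all outgoing moves from $A$ in $T$ point to the left.

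Assume both $\overline{A}$ and $\underline{A}$ are used as sources of outgoing-to-the-left moves in the pruned sequence. I would reroute each outgoing move of the form $(\underline{A},v\kisnyil w)$ with $w\notin V(A)$ by first shifting the relevant pebble onto $\overline{A}$ via $(\underline{A},\underline{A}\kisnyil\overline{A})$, then performing the analogous outgoing move from $\overline{A}$, and, if the original target was $\underline{x}$ rather than $\overline{x}$, inserting a corrective move $(\overline{x},\overline{x}\kisnyil\underline{x})$ on the left side. By Lemma~\ref{A-maxtriv}, once every outgoing-to-left move uses only $\overline{A}$ the whole sequence is automatically $A$-biased. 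The rerouted sequence remains executable as long as the side we keep has enough pebbles to absorb the transfer losses, and it still reaches $s$ as long as the later moves on the left do not depend critically on which of $\overline{x},\underline{x}$ received the rerouted pebble.

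The main obstacle is to prove that the rerouting can fail for \emph{at most one} vertex $s$ left of $A$ and to pin down the snapshot that any sequence reaching that $s$ must traverse. The failure mechanism I expect is marginal: the only way to reach $s$ forces a strict rubbling move that couples both sides of $A$ via their left neighbours, while the pebble counts on $\overline{A}$ and $\underline{A}$ are so tight that no single-side substitution can pay the transfer loss. Tracing the execution at the instant this mixed move is fired produces exactly the configuration drawn in Fig.~\ref{exceptionfig}. The hard part of the proof is the case analysis showing that this snapshot characterises a unique exception vertex: if two distinct vertices $s,s'$ left of $A$ both failed, the two forced snapshots would together supply enough pebbles on one side of $A$ to rebuild an $A$-biased sequence reaching at least one of them, contradicting the assumed failure.
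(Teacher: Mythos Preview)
Your rerouting step is the heart of the plan, and it does not work as stated. The move $(\underline{A},\underline{A}\kisnyil\overline{A})$ is a pebbling move: it consumes \emph{two} pebbles at $\underline{A}$ to deposit one at $\overline{A}$. So replacing a single outgoing move from $\underline{A}$ by ``shift up, then go left from $\overline{A}$'' costs one extra pebble per rerouted move. There is no reason the pruned sequence should carry this slack; in general it will not, and your sentence ``remains executable as long as the side we keep has enough pebbles to absorb the transfer losses'' is precisely the claim that needs proof and is typically false. You also never justify why the failure of this lossy rerouting forces the \emph{specific} snapshot of Fig.~\ref{exceptionfig}, nor why two failures would ``together supply enough pebbles''---that last assertion is just a hope, not an argument.

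The paper takes a completely different route that avoids modifying an arbitrary sequence. The key tool is the greedy rubbling sequence toward a target, which is automatically $A$-biased (Lemma~\ref{A-maxtriv}(ii)). Working from right to left, assume every vertex strictly between $v_i$ and $A$ already has an $A$-biased sequence. If $R_p(v_i)\ge 1$, greedy rubbling from the right reaches $v_i$ and is $A$-biased; if $L_p(v_i)\ge 1$, no pebble from $A$ is needed at all. In the remaining case $R_p(v_i)=L_p(v_i)=0$, the paper isolates the part of any reaching sequence that acts only on vertices right of $v_i$, enumerates the five possible pebble patterns it can leave on the rung $\{v_{i-1},w_{i-1}\}$, and in all but one pattern \emph{replaces} that right part by a greedy (hence $A$-biased) sequence toward the appropriate vertex of that rung. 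The one pattern that resists this replacement is exactly what pins down the configuration of Fig.~\ref{exceptionfig}, and uniqueness is then shown directly by exhibiting $A$-biased sequences for every other vertex once $v_i$ is the exception. The decisive idea you are missing is this wholesale substitution by greedy rubbling, rather than local surgery on the given sequence.
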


\begin{figure}[bht]
\centering
\scalebox{0.6}{\input{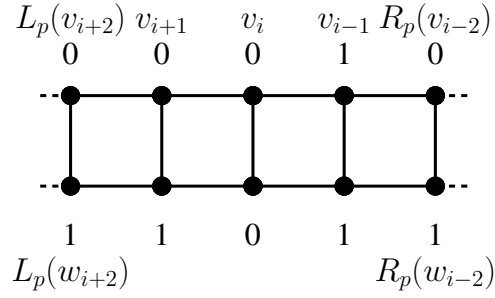}}
\caption{The only possible exception.}
\label{exceptionfig}
\end{figure}

\begin{proof}
  Assume that for each $s$ which is located to the right from $v_i$ and
  located to the left from rung $A$, there exist a suitable $A$-biased
  sequence $S_s$. Now we show that either some $S_{v_i}$ exists or
  $v_i$ is the single exception.  Let $S$ be a rubbling sequence such
  that $p_S(v_i)=1$. If $R_p(v_i)\geq 1$ then the greedy rubbling
  sequence towards $v_i$ from the right is executable and
  $A$-biased. $L_p(v_i)\geq 1$ means that $v_i$ is reachable without
  any pebble of rung $A$, hence the statement holds trivially in this
  case. Thus we have to check cases where $R_p(v_i)=0$ and
  $L_p(v_i)=0$.
\par Our assumption implies that $v_{i-1}$ is reachable with an $A$-biased sequence. 
 Let $T$ be a subsequence of $S$ such that $T$ contains only rubbling moves which act on vertices located right from $v_i$, and $T$ is maximal. 
 $R_p(v_i)=0$ implies that $R_{p_T}(v_i)=0$ which means that one of the following cases holds:
\begin{itemize}
\item \textbf{Case 1.} $p_T(v_{i-1})=1$, $p_T(w_{i-1})=1$
\item \textbf{Case 2.} $p_T(v_{i-1})=1$, $p_T(w_{i-1})=0$
\item \textbf{Case 3.} $p_T(v_{i-1})=0$, $p_T(w_{i-1})=1$
\item \textbf{Case 4.} $p_T(v_{i-1})=0$, $p_T(w_{i-1})=2$
\item \textbf{Case 5.} $p_T(v_{i-1})=0$, $p_T(w_{i-1})=3$
\end{itemize}    
\par 

In Cases 3--5  we can replace $T$ with a greedy sequence towards $w_{i-1}$, denoted by $Z$, its moves also act only on vertices located right from $v_i$, so $p_T(w_{i-1})\leq p_Z(w_{i-1})$. In Case 2 we replace $T$ with a similar greedy $Z$ that reaches $v_{i-1}$. 
 $v_i$ is reachable by the executable sequence $Z(S\backslash T)$, hence this sequence is $A$-biased by Lemma \ref{A-maxtriv}. So we completed the proof for Cases 2--5. 

\par Now let us prove Case 1.

\par If $L_p(v_{i+1})\geq 1$ then $(v_{i-1},v_{i+1}\kisnyil v_i)$ can move a pebble to $v_i$ after we apply $T$ and some moves which act only on vertices not to the right from $v_{i+1}$. Thus we do not need a pebble at $w_{i-1}$, so $T$ can be replaced again by a greedy sequence towards $v_{i-1}$.  Now we show that if $L_p(v_{i+1})=0$, then we see the distribution during the execution of $S$ shown in Fig. \ref{exceptionfig}.

$R_p(v_i)=0$ implies $R_p(v_{i-1})\leq 1 $. The reachability of $v_i$, $L_p(v_{i+1})=0$ and $R_p(v_{i-1})\leq 1 $ imply that we can move a pebble to $w_{i-1}$, and for the same reasons it can be done only by the execution of a $(w_{i+1},w_{i-1}\kisnyil w_i)$ move. Thus $Lp(w_{i+1})=1$. 
The conditions $R_p(v_i)=0$, $L_p(v_{i+1})=0$, $L_p(w_{i+1})=1$, $p(w_i)=0$  imply that the distribution shown in Fig. \ref{exceptionfig} has to be seen during the reach of $v_i$. 

\par
Finally we prove that at most one exception may exist. Assume that
$v_i$ is an exception. It is easy to see that if $v_i$ is an exception
then $w_i$ can not be. Also, no vertex located to the right from $v_i$
can be an exception. We can reach $v_{i+1}$ with the
$(w_{i},w_{i+2}\kisnyil w_{i+1}),(w_{i+1},w_{i+1}\kisnyil v_{i+1})$
sequence after we reach $w_{i}$ with an $A$-biased sequence. Any other
vertex located to the left from $v_i$ can not use a pebble at $v_i$ or
$w_i$, hence we do not need to use any pebbles of rung $A$ to reach
them.
\end{proof}

\begin{lemma}
  Let $p$ be a solvable distribution of $P_n\square P_2$, and let $A$
  be an arbitrary rung in the graph. Then there exists a solvable
  distribution $q$ satisfying the following conditions:
\begin{enumerate}
\item $|q|=|p|$
\item $q(v)=p(v)$ for all vertices $v$ not located to the left of $A$.
\item There exists a sequence $S_s$ for all vertices $s$ located on
  the left  side of $A$ which is $A$-biased and reaches $s$ from
  $q$.
\item If $T$ is an executable sequence under $p$ then there is an
  executable sequence $T'$ under $q$ such that
  $p_T(\overline{A})=q_{T'}(\overline{A})$ and
  $p_T(\underline{A})=q_{T'}(\underline{A})$.
\end{enumerate}
\label{A-max} 
\end{lemma}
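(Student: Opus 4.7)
The plan is to treat separately the case when $p$ has no exception (in the sense of Lemma~\ref{kiveteleslemma}) and the case with one exception; in the latter case I would modify $p$ by shifting a single pebble on the left of $A$ to eliminate the exception, and then verify the four conditions.

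\textbf{Easy case.} By Lemma~\ref{kiveteleslemma}, at most one vertex left of $A$ fails to admit an $A$-biased reaching sequence under $p$. If there is no such exception, set $q=p$: conditions (1), (2), (4) are trivial and (3) is the hypothesis.

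\textbf{Construction in the hard case.} Assume the exception exists. Using the up/down symmetry of the ladder, I may assume the exception is $v_i$, so that by Lemma~\ref{kiveteleslemma} the configuration of Fig.~\ref{exceptionfig} is forced: $L_p(v_{i+1})=0$, $L_p(w_{i+1})=1$, and $R_p(v_{i-2})\ge 1$, $R_p(w_{i-2})\ge 1$. Since $Lw_p(w_{i+1})\ge 1$ by Lemma~\ref{egeszresz}, there is an index $j\ge i+1$ with $p(w_j)\ge 1$; pick the smallest such $j$. Define
\[ q(w_j)=p(w_j)-1,\quad q(v_j)=p(v_j)+1,\quad q(u)=p(u)\text{ for every other }u. \]
Conditions (1) and~(2) are immediate.

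\textbf{Verifying (3).} The key point is to show $L_q(v_{i+1})\ge 1$. The shift raises $Lw(v_{i+1})$ by $2^{-(j-i)}$ and lowers $Lw(w_{i+1})$ by the same amount. A short inspection of the configurations on the left of $A$ compatible with $L_p(v_{i+1})=0$ and $L_p(w_{i+1})=1$, using the minimality of $j$, shows that $Lw_q(v_{i+1})\ge 1$, whence $L_q(v_{i+1})\ge 1$ by Lemma~\ref{egeszresz}. In any remaining borderline configuration I would iterate the same shift on a smaller left subladder. Once $L_q(v_{i+1})\ge 1$, $v_i$ becomes $A$-biased reachable by the concatenation: (a) reach $v_{i+1}$ by an $A$-free sequence on the left-upper row; (b) reach $v_{i-1}$ by greedy rubbling from the right, which is $A$-biased by Lemma~\ref{A-maxtriv}; (c) apply $(v_{i+1},v_{i-1}\kisnyil v_i)$. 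The concatenation is $A$-biased by Lemma~\ref{A-maxtriv}. Every other vertex left of $A$ was already $A$-biased reachable under $p$ and stays so, because the extra pebble at $v_j$ can be pushed back down to $w_j$ by a short detour inside any old reaching sequence that relied on the now-missing pebble.

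\textbf{Main obstacle.} The subtle condition is~(4). Given any executable sequence $T$ under $p$, I would construct an executable $T'$ under $q$ achieving the same pebble counts at both $\overline{A}$ and $\underline{A}$ by simulating $T$ move by move and, whenever $T$ would consume the pebble at $w_j$ that is missing in $q$, substituting a short detour through the additional pebble at $v_j$. Verifying that this substitution always succeeds without a net loss of pebbles en route to $A$---using the minimality of $j$ and the row-swap symmetry of the ladder---is the most delicate technical step of the proof.
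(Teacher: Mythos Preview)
Your approach diverges from the paper's at the crucial construction. When the exception $v_i$ exists, the paper does not shift a single pebble; it \emph{reflects} the entire left part of the distribution across the horizontal axis, setting $q(v_j)=p(w_j)$ and $q(w_j)=p(v_j)$ for every $j>i$. Because this reflection is a graph automorphism of the left subladder, any rubbling sequence on rungs $>i$ under $p$ becomes (after swapping $v\leftrightarrow w$) a rubbling sequence under $q$, so condition~(3) holds automatically for all vertices except those on rungs $i$ and $i+1$, which are checked by hand. Condition~(4) is then almost trivial: the exception forces $L_p(v_i)=L_p(w_i)=0$, so pebbles on rungs $>i$ can never be delivered across rung $i$ toward $A$; one takes $T'$ to be the part of $T$ acting only to the right of rung $i$, which is executable under both $p$ and $q$.

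Your single-pebble shift creates real difficulties that your sketch does not resolve. First, ``$Lw_p(w_{i+1})\ge 1$ implies some $p(w_j)\ge 1$'' is false as stated, since the left weight sums over both rows; the conclusion happens to be true, but for a symmetry reason you do not give. Second, the assertion that other vertices remain $A$-biased reachable because ``the extra pebble at $v_j$ can be pushed back down to $w_j$'' is wrong: a lone pebble at $v_j$ cannot be converted into a pebble at $w_j$ without consuming an additional pebble, so sequences that relied on the missing pebble at $w_j$ need not survive. Third, the move-by-move simulation you propose for~(4) is unnecessary and unjustified as written; the isolation argument above applies equally well to your $q$ (you only alter a rung $>i$), which would give~(4) in one line --- so what you call the ``main obstacle'' is in fact the easy part. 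The paper's reflection avoids all three issues simultaneously.
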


\begin{proof}
  If we do not get an exception while applying Lemma
  \ref{kiveteleslemma} then $q= p$ trivially satisfies all conditions,
  so we are done. Otherwise, we have to change $p$. Assume that $v_i$
  is the exceptional vertex. Let $q$ be the following distribution:
  $q(s)=p(s)$ for all vertices not located to the left from $v_i$ and
  $q(w_j)=p(v_j)$, $q(v_j)=p(w_j)$ when $j>i$. (In other words, we
  just reflect the vertices located to the left from $v_i$ on a horizontal
  axis.) Conditions 1 and 2 trivially hold again, as well as Condition
  3 for all vertices except
  ${v_i,v_{i+1},w_i,w_{i+1}}$. $L_q(v_{i+1})\geq 1$ so $v_i$ is not an
  exception under $q$. $R_p(w_i)\geq 1$ and nothing has changed at
  vertices which are not to the left of $w_i$, so $R_q(w_i)\geq
  1$. This means that Condition 3 holds for these vertices, too.

  The last condition is trivial if $p= q$, otherwise none of the
  pebbles placed on the reflected vertices can be moved to rung $A$,
  because of the definition of the exception
  ($L_p(v_i)=L_p(w_i)=0$). So let $T'$ be the part of $T$ which acts
  only on vertices located to the right from $v_i$. The fact that $p$ and $q$
  are the same on these vertices implies that $T'$ is executable. Thus
  the vertices of rung $A$ and the other vertices located to the right from
  $A$ can be reached from $q$.
\end{proof}

Naturally, the ``right-sided'' version of the above ''left-sided''
lemma can be proved similarly. Now, since the distribution in the
left-sided version is not changed on the right side, and in the
right-sided version it is not changed on the left side, we can apply
both versions simultaneously.

\begin{corollary}
  Fix rungs $A$ and $B$ such that $B$ lies to the right of $A$. Then
  there is a $q$ which fulfills the conditions of Lemma \ref{A-max}~
  for $A$ on the left side and for $B$ on the right side.
\label{A-maxkov}
\end{corollary}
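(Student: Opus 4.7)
My plan is to obtain $q$ by applying Lemma~\ref{A-max} twice in succession. First, invoke Lemma~\ref{A-max} on $p$ with rung $A$ to obtain a distribution $q_1$. By that lemma, $q_1$ is solvable, has the same total number of pebbles as $p$, agrees with $p$ on every vertex not strictly to the left of $A$, and satisfies the $A$-biased reachability property on the left together with condition 4 relative to $A$. Second, invoke the right-sided analog of Lemma~\ref{A-max} on $q_1$ with rung $B$ to produce $q$. This second step modifies $q_1$ only on vertices strictly to the right of $B$. Because $B$ lies to the right of $A$, the two regions of change, strictly left of $A$ and strictly right of $B$, are disjoint, and both sets of guarantees coexist.

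Conditions 1 and 2 (with the natural interpretation that $q$ agrees with $p$ on the common middle region from $A$ to $B$ inclusive) are immediate from the construction. The right-side versions of conditions 3 and 4 follow directly from the right-sided analog applied to $q_1$. For the left-side version of condition 3, take any vertex $s$ strictly to the left of $A$. By Lemma~\ref{A-max}, there is an $A$-biased sequence $S^{(1)}_s$ reaching $s$ from $q_1$. Its moves acting entirely on vertices strictly right of $B$ have no effect on the left part of the graph except through the pebbles they deliver to rung $B$. By condition 4 of the right-sided analog, the same pebble counts at $\overline{B}$ and $\underline{B}$ can be realized under $q$ by a sequence acting only on the right of $B$; splicing this replacement into $S^{(1)}_s$ in place of its right-of-$B$ moves yields an executable sequence under $q$ reaching $s$. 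Since the splice introduces no moves touching rung $A$, the resulting sequence is still $A$-biased. The left-side version of condition 4 is obtained by the same splicing recipe: take the $T_1$ executable under $q_1$ produced by condition 4 of Lemma~\ref{A-max}, replace its right-of-$B$ portion using condition 4 of the right-sided analog, and verify that the count on rung $A$ is unchanged.

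The main obstacle I expect is making the splicing step rigorous. What has to be checked is that after replacing the right-of-$B$ moves of $S^{(1)}_s$ (or of $T_1$) by a new sequence guaranteed by condition 4, the remaining moves, which act only on rungs from $B$ leftward, are still executable. These remaining moves consume only pebbles from rungs left of $B$ together with pebbles that have arrived at rung $B$ itself; the former are unchanged because $q$ equals $q_1$ strictly left of $B$, and the latter are preserved by the right-sided condition 4. This justifies the concatenation and completes the verification of both left-side conditions, yielding the corollary.
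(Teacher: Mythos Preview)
Your approach is essentially the same as the paper's: apply Lemma~\ref{A-max} first for rung $A$, then its right-sided analog for rung $B$, and argue that the second application does not destroy what the first achieved. The paper's proof is terser, simply noting that conditions 2 and 4 of Lemma~\ref{A-max} guarantee compatibility; your splicing argument is a more explicit unpacking of exactly that claim, and it is correct.
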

\begin{proof} We apply Lemma \ref{A-max}\ first for rung $A$ then for rung $B$ with opposite orientation of the graph. The 2. and 4. condition of the lemma guarantee that we do not ruin what we got after the first application.   
\end{proof}

\begin{corollary}
Let $p$ be a solvable pebble distribution on the graph $G=P_n\square P_2$. If a distribution $p^R$ in the graph $G^R$ satisfies
\begin{itemize}
\item $R_p(\overline{A})\leq R_{p^R}(\overline{A})$,
\item $R_p(\underline{A})\leq R_{p^R}(\underline{A})$,
\item $L_p(\overline{B})\leq L_{p^R}(\overline{B})$ and
\item $L_p(\underline{B})\leq L_{p^R}(\underline{B})$  
\end{itemize} then
all vertices located to the left of $A$ and located to the right of $B$ are reachable from $p^R$.
\label{p-kov} 
\end{corollary}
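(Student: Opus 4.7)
By symmetry it suffices to prove reachability of a vertex $v$ strictly left of $A$ in $G^R$ from $p^R$. The plan is to realize in the reduced graph the informal scheme spelled out in the paragraph right after Corollary~\ref{A-maxkov}.

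First I apply Lemma~\ref{A-max} to $p$ for the rung $A$, obtaining a solvable distribution $q$ on $G$ that agrees with $p$ on $V(A)$ and on all vertices not strictly left of $A$ and admits an $A$-biased rubbling sequence $S$ reaching $v$. Since $q=p$ on $V(A)$ and to the right of $A$, we have $R_q(\overline{A})=R_p(\overline{A})$ and $R_q(\underline{A})=R_p(\underline{A})$. By the $A$-biased property one of $\overline{A}$ and $\underline{A}$, say $\underline{A}$ (the other case is symmetric, and the case when $S$ sends no pebble out of $A$ is trivial since then $v$ is reached using only pebbles on $L\cup V(A)$), is the unique vertex of $A$ through which $S$ pushes pebbles to other rungs.

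The heart of the proof is a claim implicit in the remark after Corollary~\ref{A-maxkov}: one can extract from $S$ a rubbling sequence $\widetilde S$ that reaches $v$ using only moves on $\underline{A}$ and vertices strictly left of $A$, provided $\underline{A}$ starts with $R_q(\underline{A})$ pebbles and every other vertex carries its $q$-value. The intuition is that by $A$-biasedness every pebble which eventually leaves $A$ to the left must pass through $\underline{A}$, so the right-of-$A$ activity of $S$ can only help by boosting $\underline{A}$, and the best possible such boost is exactly $R_q(\underline{A})$. I would construct $\widetilde S$ by scanning $S$ and replacing each move that touches $\overline{A}$ or a vertex right of $A$ by either nothing (if wasteful) or by a pre-delivery of a pebble to $\underline{A}$, while leaving untouched every move of $S$ lying on $\underline{A}$ or strictly left of $A$.

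The assembly in $G^R$ is then immediate: the hypothesis $R_{p^R}(\underline{A})\ge R_p(\underline{A})=R_q(\underline{A})$ provides a rubbling sequence $T$ under $p^R$ acting only on $\underline{A}$ and vertices strictly right of $A$ in $G^R$ that leaves $\underline{A}$ with at least $R_q(\underline{A})$ pebbles, and the concatenation $T\widetilde S$ is executable under $p^R$ and reaches $v$ because the two blocks touch disjoint vertex sets apart from $\underline{A}$, which is loaded only by $T$ and consumed only by $\widetilde S$. The main obstacle is the construction of $\widetilde S$, in particular the treatment of ``crossing'' rubbling moves such as $(v_{A-1},v_{A+1}\kisnyil\overline{A})$ or $(\overline{A},x\kisnyil\underline{A})$ with $x$ a neighbour of $\underline{A}$ outside $A$; each such move has to be either discarded as wasteful or absorbed into the pre-delivery at $\underline{A}$. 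A secondary subtlety is that when Lemma~\ref{kiveteleslemma}'s exceptional case is active, $q$ differs from $p$ on $L$ by a horizontal reflection while $p^R$ differs from $p$ on $L$ only by within-rung averaging; reconciling these modifications will require a brief reflection argument applied to $\widetilde S$.
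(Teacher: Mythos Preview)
Your approach is essentially the paper's: replace $p$ by $q$ via Lemma~\ref{A-max}, use the $A$-biased sequence $S$ reaching $v$, strip out the part of $S$ living on the biased vertex and the left side, and in $G^R$ pre-load the biased vertex using the hypothesis before running that stripped sequence. The paper does exactly this, writing $T$ for your $\widetilde S$ (the subsequence of $S$ consisting of moves acting only on the biased vertex of $A$ and on vertices left of $A$) and $Z$ for your $T$.

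Two remarks on the subtleties you flag. First, the paper does not build $\widetilde S$ by scanning and editing; it simply takes the subsequence $T$ of $S$ whose moves act only on $\{\text{biased vertex of }A\}\cup\{\text{left of }A\}$ and asserts that $T$ is executable once the biased vertex has been loaded with $R_q$ pebbles. Your worry about ``crossing'' moves such as $(\overline{A},x\kisnyil\underline{A})$ is legitimate but is absorbed into the paper's convention that such moves belong to $S\setminus T$, whose only net effect on the $T$-computation is to feed pebbles into the biased vertex. Second, on the $q$ versus $p^R$ mismatch on the left side: the paper does not reflect $\widetilde S$ as you suggest, but instead silently takes $p^R(v)=q(v)$ for $v$ left of $A$ (this is permitted by the within-rung flexibility in the standing definition of the reduced distribution). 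Your proposed reflection of $\widetilde S$ would work too, but the paper's route is shorter.
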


\begin{proof} By Corollary \ref{A-maxkov} we can replace $p$ with $q$
  which has the same size, and all vertices located to the left of $A$
  (to the right of $B$) are reachable with an $A$-biased ($B$-biased)
  sequence. It is easy to see that
  $R_p(\overline{A})=R_{q}(\overline{A})$,
  $R_p(\overline{A})=R_{q}(\overline{A})$,
  $L_p(\overline{B})=L_{q}(\overline{B})$ and
  $L_p(\underline{B})=L_{q}(\underline{B})$ hold.

\par   
Let $p^R$ be a distribution such that the conditions hold and
$p^R(v)=q(v)$ for all $v$ located to the left from $A$, or to the right from
$B$. Fix a $v$ to the left from $A$. It suffices to show the statement for a
$v$ that is on the left  side of
$A$.

\par An $A$-biased sequence $S$ reaches $v$ under $q$ in $G$. Assume
that $S$ does not contain a $(\underline{A},*\kisnyil w)$ move where
$w\neq \overline{A}$. Let $T$ be a subsequence of $S$ such that $T$
contains only moves that act on vertices located left from rung $A$
and on $\overline{A}$. $T$ uses only $R_p(\overline{A})$ pebbles at
$\overline{A}$ and reaches $v$ under $q_{S\backslash T}$. $S\backslash
T$ moves at most $R_q(\overline{A})=R_p(\overline{A})$ pebbles to
$\overline{A}$. Let $Z$ be an executable sequence under $p^R$, such
that $Z$ does not act on any vertex left from $A$ and moves
$R_{p^R}(\overline{A})$ pebbles on $\overline{A}$. $ZT$ is executable
under $p^R$ and reaches $v$.

\par The proof is similar in all other cases.   
\end{proof}

The combination of Lemma \ref{egeszresz} and Corollary \ref{p-kov} shows that it is enough to find a distribution $p^R$ on $G^R$ that satisfies the following inequalities:
$$\left\lfloor Rw_{p^R}(\overline{A})\right\rfloor-\left\lfloor Rw_{p}(\overline{A})\right\rfloor\geq 0$$
$$\left\lfloor Rw_{p^R}(\underline{A})\right\rfloor-\left\lfloor Rw_{p}(\underline{A})\right\rfloor\geq 0$$
$$\left\lfloor Lw_{p^R}(\overline{B})\right\rfloor-\left\lfloor Lw_{p}(\overline{B})\right\rfloor\geq 0$$
$$\left\lfloor Lw_{p^R}(\underline{B})\right\rfloor-\left\lfloor Lw_{p}(\underline{B})\right\rfloor\geq 0$$
For the sake of simplicity we call these inequalities \textit{original}. The original inequalities contain floor functions. Calculating without floor functions is much easier, hence we prefer to calculate with the following \textit{modified} inequalities:
\label{modified}
$$Rw_{p^R}(\overline{A})-Rw_{p}(\overline{A})\geq 0$$
$$Rw_{p^R}(\underline{A})-Rw_{p}(\underline{A})\geq 0$$
$$Lw_{p^R}(\overline{B})- Lw_{p}(\overline{B})\geq 0$$
$$Lw_{p^R}(\underline{B})-Lw_{p}(\underline{B})\geq 0$$
It is clear that if the modified inequalities hold then the original inequalities hold as well. On the other hand, the following lemma  shows that the modified inequalities with a weak additional property imply that $p^R$ is solvable.

\begin{lemma}
If $p(R)\geq 4$ and the modified inequalities are satisfied then $p^R$ is solvable.\label{lem:legalabbnegy}
\end{lemma}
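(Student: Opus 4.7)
The plan is to invoke Corollary~\ref{p-kov} for every vertex of $G^R$ except the four ``boundary'' vertices $\overline{A}$, $\underline{A}$, $\overline{B}$, $\underline{B}$, and then handle these four by a short case analysis. By Corollary~\ref{p-kov} together with Lemma~\ref{egeszresz} (which says the modified inequalities imply the original ones), every vertex of $G^R$ lying strictly to the left of $A$ or strictly to the right of $B$ is reachable under $p^R$; only the four boundary vertices remain to be dealt with.

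The crucial arithmetic input is that by the defining identity of $p^R$ and the hypothesis $p(R)\ge 4$,
\[
p^R(\overline{A})+p^R(\underline{A})+p^R(\overline{B})+p^R(\underline{B}) \;=\; p(\overline{A})+p(\underline{A})+p(\overline{B})+p(\underline{B})+p(R)-2 \;\ge\; 2.
\]
In $G^R$ the rungs $A$ and $B$ have become neighbouring rungs, so these four vertices induce a $C_4$ on which $p^R$ places at least two pebbles. Any boundary vertex already carrying a pebble of $p^R$ is reachable trivially, so it suffices to produce a pebble at every zero-pebble boundary vertex.

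I would then run a case analysis on how these at-least-two pebbles are distributed on the $C_4$, up to symmetry. If the two pebbles lie diagonally, a single strict rubbling move reaches each of the other two vertices, since on a $C_4$ the two diagonal vertices share both remaining vertices as common neighbours. If the two pebbles are adjacent or are concentrated on a single vertex, then either pebbling and rubbling moves internal to the $C_4$ suffice, or else, in the most adversarial subcases, we must import an extra pebble from outside $A\cup B$. In the latter situation I would appeal to the modified inequality at the unreached vertex $v$, combined with Lemma~\ref{egeszresz}, to extract nonzero left- or right-reachability at a suitable neighbour of $v$ in $G^R$, and use Corollary~\ref{A-maxkov} to realise that reach by an $A$- or $B$-biased rubbling sequence that does not touch the pebbles sitting on $A\cup B$; concatenating it with a single internal move on the $C_4$ then delivers a pebble to $v$.

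The step I expect to be the main obstacle is the bookkeeping of this last case: verifying that the modified inequalities, together with the $p(R)-2$ extra pebbles concentrated on $A\cup B$, really do force enough outside weight at $v$ whenever the internal $C_4$-moves alone cannot finish the job, and that the imported biased sequence and the internal move remain mutually compatible. Everything else reduces to elementary pebbling or rubbling moves on the four boundary vertices.
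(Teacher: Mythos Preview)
Your overall strategy matches the paper's: invoke Corollary~\ref{p-kov} via Lemma~\ref{egeszresz} to cover all vertices outside rungs $A$ and $B$, observe that $p(R)\ge 4$ forces at least two pebbles on the $C_4$ induced by $A\cup B$ in $G^R$, and then case-analyse the four boundary vertices. By symmetry the paper reduces to $\underline{A}$ and works with the same $C_4$ picture you describe.

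The gap is precisely where you flag ``the main obstacle''. Your plan is to extract outside weight at the unreached vertex $v$ from the modified inequality \emph{at $v$ itself}. That does not work: the modified inequality $Rw_{p^R}(\underline{A})\ge Rw_p(\underline{A})$ only compares $p^R$ to $p$, and if $R_p(\underline{A})$ was already $0$ this tells you nothing. What the paper actually uses at this point is the \emph{solvability of $p$} on the original graph: since $\underline{A}$ is reachable under $p$, one can deduce that its outside neighbour (the paper's $v_1$) satisfies $L_p(v_1)=1$ once $L_p(w_1)=0$ and $R_p(A)=0$; this imported pebble then finishes two of the three bad subcases. In the remaining subcase (two pebbles concentrated on $\overline{B}$), the paper does appeal to a modified inequality, but at $\overline{A}$, not at $\underline{A}$, and combines it with a chain of deductions from the solvability of $p$ to force $R_{p^R}(\overline{B})\ge 3$, which lets three pebbles flow to $\underline{A}$ through the $C_4$.

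So your outline is right in spirit, but to close it you need to replace ``use the modified inequality at $v$'' with ``use that $v$ was reachable under $p$ in $G$'', and be prepared in one subcase to run a short contradiction argument using the modified inequality at a \emph{different} boundary vertex.
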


 We use the notations of Fig. \ref{terv} in the next proof and in further sections.

\begin{proof}
By the above results we only need to check that $\overline{A}$, $\underline{A}$, $\overline{B}$, $\underline{B}$ are all reachable. By symmetry it is enough to do it for $\underline{A}$. 

Assume that $p^R(\underline{A})=0$ and $p^R(\overline{A})<2$, otherwise $\underline{A}$ is trivially reachable under $p^R$.  
 After the reduction, at least two pebbles will be placed somehow on the subgraph induced by rung A and B. Now it is easy to verify that if $L_p({w_1})\geq 1$ then $\underline{A}$ is reachable under $p^R$. 
\begin{figure}
\label{tcases}
\centering
\scalebox{0.6}{\input{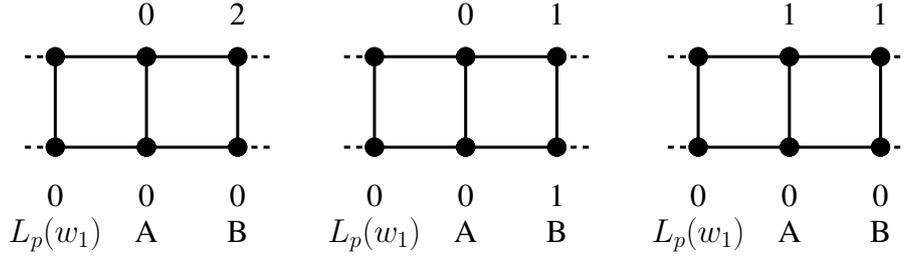}}

\caption {Three cases.}
\end{figure}

If $L_p({w_1})= 0$ then there are three remaining ways to distribute the two pebbles on $A$ and $B$, these are shown on Fig. 
5.

 $R_p(A)=0$, $L_p(w_1)=0$ and the fact that $\underline{A}$  is reachable under $p$ implies that $L_p(v_1)=1$. The reduction leaves $v_1$ reachable from the left. It is easy to see that $\underline{A}$ is reachable with the help of this pebble in the second and the third case.
In the first case, first we show that $R_{p}(\overline{l})\geq 3$ which will imply that $R_{p^R}(\overline{B})\geq 3$ and $\underline{A}$ is reachable again. 

$R_p(\overline{A})=1$, otherwise $w_1$ can not be reachable under $p$. This implies that $R_p(\overline{l})\geq 2$. The reachability of $\underline{A}$ under $p$ requires that $R_p(l)\geq 3$, but the fact that $R_p(\underline{A})=0$ and $R_p(l)\geq 3$ excludes that $R_p(\underline{l})> 0$, thus $R_p(\overline{l})\geq 3$. 

 Assume that $R_{p^R}(\overline{B})= 2$ and use the condition of $\overline{A}$ and Lemma \ref{egeszresz}. This results in the following contradiction:
$$\frac{3}{2}\leq Rw_p(\overline{A}) \leq Rw_{p^R}(\overline{A})=\frac{Rw_{p^R}(\overline{B})}{2}<\frac{R_{p^R}(\overline{B})+1}{2}=\frac{3}{2}$$

\end{proof}

\begin{corollary}
$p^R$ is solvable if one of the following statements holds:
\begin{enumerate}
\item The modified inequalities hold and $p(R)\geq 4$.
\item The original inequalities hold and the vertices of rung A and B are reachable from $p^R$.
\end{enumerate} 
\end{corollary}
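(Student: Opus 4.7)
The plan is to observe that this corollary is essentially a bookkeeping consequence of the three tools already built up in the subsection: Lemma \ref{egeszresz}, Corollary \ref{p-kov}, and Lemma \ref{lem:legalabbnegy}. In both statements I would partition the vertices of $G^R=P_{n-3}\square P_2$ into three groups: those strictly to the left of rung $A$, those strictly to the right of rung $B$, and the four vertices lying on rungs $A$ or $B$ themselves. Since $G^R$ arises from $G$ by removing the interior subgraph $R$ and gluing $A$ to $B$, these three groups exhaust $V(G^R)$, so it suffices to handle each group separately.

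For statement 1, I would first record the trivial monotonicity $\lfloor x\rfloor \geq \lfloor y\rfloor$ whenever $x\geq y$, which shows that the modified inequalities imply the original ones. Lemma \ref{egeszresz} then rewrites the original inequalities as $R_p(\overline{A})\le R_{p^R}(\overline{A})$, $R_p(\underline{A})\le R_{p^R}(\underline{A})$, $L_p(\overline{B})\le L_{p^R}(\overline{B})$, $L_p(\underline{B})\le L_{p^R}(\underline{B})$, so Corollary \ref{p-kov} delivers reachability of every vertex outside rungs $A$ and $B$. The remaining four rung vertices are handled by Lemma \ref{lem:legalabbnegy}, whose hypothesis $p(R)\geq 4$ is exactly the extra assumption in the statement.

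For statement 2, I would skip the modified-to-original passage and apply Lemma \ref{egeszresz} and Corollary \ref{p-kov} directly to obtain reachability of all vertices outside rungs $A$ and $B$ from the original inequalities. The reachability of the four rung vertices is now part of the hypothesis, so nothing else is required.

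I do not expect any genuine obstacle: all the hard work is packaged in Lemma \ref{lem:legalabbnegy} and Corollary \ref{p-kov}, and this corollary merely records two convenient checklists that the later case analysis can invoke. The only point I would be careful about is to make the three-way partition of $V(G^R)$ explicit, so that the reader sees that no vertex of $G^R$ is missed when we stitch together reachability outside the rungs with reachability on the rungs themselves.
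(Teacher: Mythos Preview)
Your proposal is correct and matches the paper's implicit reasoning; the paper states this corollary without proof, treating it as an immediate consequence of the preceding results. Note only that statement~1 is literally the conclusion of Lemma~\ref{lem:legalabbnegy}, so there is no need to re-derive the reachability of the off-rung vertices separately---you can simply cite that lemma; your decomposition is really only needed for statement~2, where it is exactly right.
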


\par
The elements of the graph family $P_n\square P_2$ have several
symmetries. Hence we can assume without loss of generality that
$p(\overline{l})+p(\underline{l})\geq
p(\overline{r})+p(\underline{r})$ and
$p(\overline{l})+p(\overline{x})\geq
p(\underline{l})+p(\underline{x})$.  This assumption reduces the number of cases when
we enumerate and check possibilities in the appendix
and in Fig. \ref{kivetelek} and Fig. \ref{3asok}.

\par 
In the next sections we show that a solvable reduced distribution
always exist. This requires a case analysis. In most of the cases it is
enough to use the modified inequalities, however, in a few cases the
original inequalities are needed.

\subsection{The difference between the old and the new reachability}\label{minnegy}
\par
In this section we prove that we can find a reduction method for all
graphs and for each pebbling distribution if the graph contains a $P_3
\square P_2$ subgraph which has at least four pebbles. Usually we fix
the distribution $p$ and consider subgraph $R=P_3 \square P_2$ which
has the most pebbles. So if $R'$ is also a $P_3\square P_2$ subgraph
of $G$, then $p(R)\geq p(R')$.

First, we show how can we prove the solvability of a reduced
distribution by calculation. We next show how to do this is one of the 
cases. All other ones are handled in a similar way. These calculations
are contained in the appendix.

Let $p$ be a solvable pebbling distribution which satisfies
$p(\overline{l})+p(\overline{x})\geq 4$. A proper reduction method in
this case is the following: Take the pebbles from vertices
$\overline{l}$ and $\overline{x}$, throw away two of these pebbles and
place the remaining ones to $\overline{A}$. Place the other pebbles of
$R$ at vertices of rung $A$ and $B$ as shown in Fig. \ref{abra}.

\begin{figure}[htb]
\centering
\scalebox{0.6}{\input{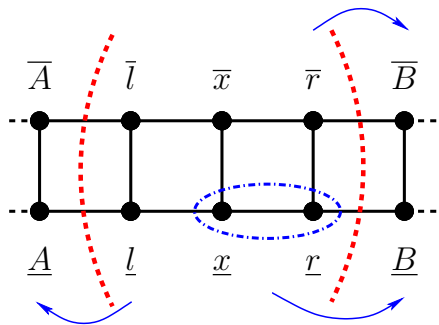}}
\caption{An example for a proper reduction method. We move the pebbles of $\underline{l}$ to $\underline{A}$, the pebbles of $\overline{r}$ to $\overline{B}$ and finally the pebbles of $\underline{x}$ and $\underline{r}$ to $\underline{B}$.}
\label{abra}
\end{figure}        
$p^R$ is not uniquely defined, but we can show that $p^R$ will be always solvable. 
We prove this by the following  calculations: 

\begin{align*}
Rw_{p^R}(\overline{A})&-Rw_{p}(\overline{A})=(p(\overline{A})+p(\overline{l})+p({\overline{x}})-2+\frac{1}{2}(p(\underline{A})+p(\overline{r})+p(\underline{l}))+\\
&+\frac{1}{4}(p(\underline{x})+p(\underline{r}))+\underbrace{\frac{1}{2}p(\overline{B})+\frac{1}{4}p(\underline{B})+\cdots}_{\Delta_1})-\\
&-(p(\overline{A})+\frac{1}{2}(p(\underline{B})+p(\overline{l}))+\frac{1}{4}(p(\underline{l})+p(\overline{x}))+\frac{1}{8}(p(\underline{x})+p(\overline{r}))+\\
&+\frac{1}{16}p(\underline{r})+\underbrace{\frac{1}{16}p(\overline{B})+\frac{1}{32}p(\underline{B})+\cdots}_{\Delta_2})=\\
&=\underbrace{\frac{1}{2}p(\overline{l})+\frac{3}{4}p(\overline{x})-2}_{\geq 0}+\underbrace{\frac{3}{8}p(\overline{r})+\frac{1}{4}p(\underline{l})+\frac{1}{8}p(\underline{x})+\frac{3}{16}p(\underline{r})}_{\geq 0} +\underbrace{\Delta_1-\Delta_2}_{\Delta}\geq 0
\end{align*}
We assumed that $p(\overline{l})+p(\overline{x})\ge 4$,  which implies $\frac{1}{2}p(\overline{l})+\frac{3}{4}p(\overline{x})-2\ge \frac{1}{2}p(\overline{l})+\frac{1}{2}p(\overline{x})-2\ge 0$.
$\Delta\geq 0$ because each vertex and its pebbles right from $B$ come closer to $\overline{A}$, even if its rung have been reflected.   
\par
Similar calculations are needed for $\overline{B}, \underline{A}$ and $\underline{B}$. The details are left to the reader, we only give here the crucial parts:
$$Lw_{p^R}(\overline{B})-Lw_{p}(\overline{B})=\underbrace{\frac{3}{8}p(\overline{l})+\frac{1}{4}p(\overline{x})}_{\geq 1}-1+\mbox{nonnegative}+\Delta\geq 0,$$
$$Rw_{p^R}(\underline{A})-Rw_{p}(\underline{A})=\underbrace{\frac{1}{4}p(\overline{l})+\frac{3}{8}p(\overline{x})}_{\geq1}-1+\mbox{nonnegative}+\Delta\geq 0,$$
$$Lw_{p^R}(\underline{B})-Lw_{p}(\underline{B})=\underbrace{\frac{3}{16}p(\overline{l})+\frac{1}{8}p(\overline{x})}_{\geq\frac{1}{2}}-\frac{1}{2}+\mbox{nonnegative}+\Delta\geq 0.$$
This implies that this is a proper reduction method.

\par
There are 20 essentially different ways to place at least 4 pebbles to $R$. In most of these cases one can find a proper reduction method and a similar argument to verify it. For completeness these are listed in the appendix.
Unfortunately, a universal reducing method which is proper for every pebbling distribution does not exist. 
The cases where a proper reducing method does not exist can be seen in Fig. \ref{kivetelek}.

\begin{figure}[htb]
\centering
\scalebox{0.6}{\input{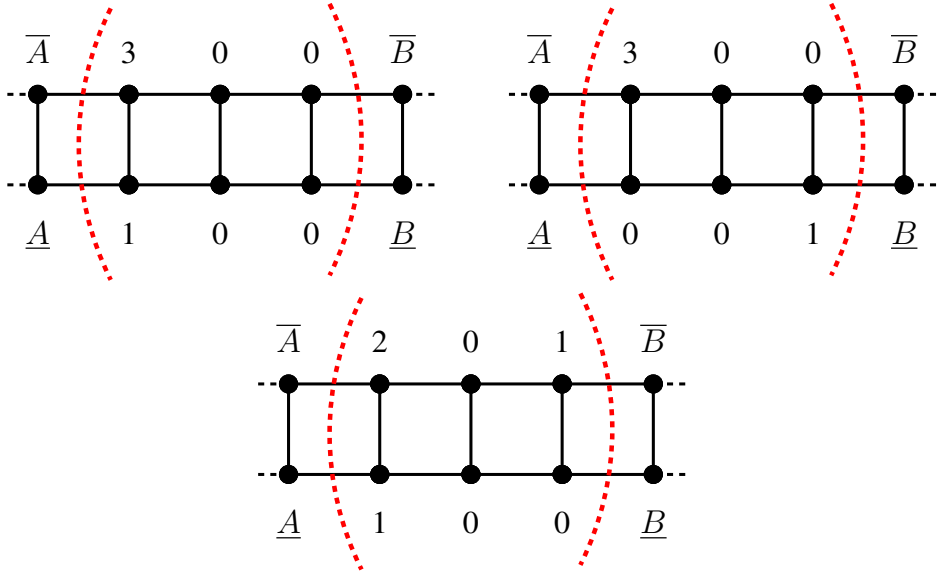}}
\caption{Exceptions when the modified inequalities do not hold.}
\label{kivetelek}
\end{figure}

The first case can be solved easily. If $R$ is at the left end of the graph then put two pebbles at the vertices of rung $B$. Otherwise, we can choose another subgraph $R'$ which contains rung $A$, $l$ and $x$ and it is not an exception, so we have got a proper reducing method for it. We call this idea the \textit{shifting technique}.\par

In the second case, there is no reduction method for this $R$ unless we use the assumption 
 that  $R$  contains the maximum number of pebbles from the set of $P_3\square P_2$ subgraphs. 
 So  assume that each of the $P_3\square P_2$ subgraphs contain at most four pebbles. The reducing method is the following: Put one pebble onto $\overline{A}$ and the other onto $\underline{B}$. It is clear that the vertices of rung $A$ and $B$ are reachable. The modified inequalities can be shown to hold for vertices $\underline{A}$, $\overline{B}$, $\underline{B}$, hence the original inequalities hold for these vertices, too. Now we need to show that $R_p(\overline{A})=1$, which implies that the original inequality holds for vertex $\overline{A}$. 

\par     
Partition $G$ to disjoint $P_3\square P_2$ subgraphs. The maximality of $R$ means that these subgraphs contain at most four pebbles. This gives an upper bound for $R_p(\overline{A})$. Thus
$$R_p(\overline{A})\leq \left\lfloor  \frac{3}{2}+\frac{1}{16}+\frac{4}{16}\sum_{i=0}^{\infty}\left(\frac{1}{8}\right )^i\right\rfloor=\left \lfloor 1+\frac{9}{16}+\frac{2}{7}\right \rfloor=1.$$
Therefore the original inequalities hold, hence this reduction method is proper.

\par Finally, consider the third case and put a pebble onto $\underline{A}$ and $\overline{B}$. The modified inequalities hold for $\underline{A}$, $\overline{B}$ and $\underline{B}$. We do what we have done in the second case. We can make an estimation again:
$$R_p(\overline{A})\leq \left\lfloor  1+\frac{1}{4}+\frac{1}{8}+\frac{4}{16}\sum_{i=0}^{\infty}\left(\frac{1}{8}\right )^i\right\rfloor=\left \lfloor 1+\frac{3}{8}+\frac{2}{7}\right \rfloor=1.$$ 
So the original inequalities also hold.
\par
We have shown that there exist a solvable reduced distribution for every distribution which has a $P_3\square P_2$ subgraph that contains at least four vertices.

\subsection{Distributions where the maximal number of pebbles on a $P_3\square P_2$ subgraph is three}\label{maxharom}

In this case there are several constraints on $p$. These lead us to some useful observations. The most important is that none of the rungs can get more than three pebbles from a fixed direction. The next lemma formulates this. 

\begin{lemma}
Let $p$ be a distribution and fix a subgraph $R$. Let T be an executable rubbling sequence which uses vertices located to the left from $R$. If every $P_3\square P_2$ subgraph has at most three pebbles then $p_T(\overline{A})+p_T(\underline{A})\leq 3$. Furthermore, if equality holds then $p(A)=3$. 
\label{szummalemma}
\end{lemma}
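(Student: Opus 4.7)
The plan is to adapt the weight-function technique of Lemma~\ref{egeszresz} so that it bounds the \emph{sum} $p_T(\overline{A})+p_T(\underline{A})$ directly, rather than each term separately. Concretely, I would define a potential $\phi$ on the vertices lying on rung $A$ or to its left by letting $\phi(\overline{A})=\phi(\underline{A})=1$ and, for $i\geq 1$, $\phi(\overline{A-i})=\phi(\underline{A-i})=2^{-i}$, where $A-i$ denotes the $i$-th rung to the left of $A$. The crucial property is the standard Moews-type inequality $\phi(u)\leq 2\phi(v)$ for every edge $uv$ of the left-hand subgraph, which implies that $\sum_v \phi(v)p(v)$ is nonincreasing under any pebbling move and, a fortiori, under any strict rubbling move acting on left-side vertices. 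Since $T$ uses only such vertices and $\phi\geq 0$, I would then get
\[
p_T(\overline{A})+p_T(\underline{A}) \;\leq\; \sum_{v}\phi(v)p_T(v) \;\leq\; \sum_{v}\phi(v)p(v).
\]

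Next I would partition the left-hand rungs into consecutive triples $G_0=\{A,A-1,A-2\}$, $G_1=\{A-3,A-4,A-5\}$, and so on (the last group may be truncated by the end of the graph, which only helps). Each $G_i$ is contained in a $P_3\square P_2$ subgraph and therefore, by hypothesis, carries at most three pebbles. On $G_i$ with $i\geq 1$ the maximum value of $\phi$ is $2^{-3i}$, attained on the rightmost rung of the block, so the weighted contribution of $G_i$ is at most $3\cdot 8^{-i}$, and the tail $\sum_{i\geq 1}3\cdot 8^{-i}$ totals $3/7$. For $G_0$ a small case split on $p(A)$ is needed: if $p(A)=3$ all three pebbles of $G_0$ must sit on $A$, giving contribution $3$; if $p(A)\leq 2$, then putting the remaining pebbles on the adjacent rung $A-1$ is best and gives contribution at most $2\cdot 1+1\cdot\tfrac{1}{2}=\tfrac{5}{2}$.

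Combining the estimates, in general the total weight is at most $3+\tfrac{3}{7}=\tfrac{24}{7}<4$, and when $p(A)\leq 2$ it is at most $\tfrac{5}{2}+\tfrac{3}{7}=\tfrac{41}{14}<3$. Because $p_T(\overline{A})+p_T(\underline{A})$ is an integer, the first bound forces $p_T(\overline{A})+p_T(\underline{A})\leq 3$ unconditionally, while the second forces $p_T(\overline{A})+p_T(\underline{A})\leq 2$ whenever $p(A)\leq 2$; contrapositively, equality to $3$ implies $p(A)=3$, which is exactly the second assertion of the lemma.

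The main obstacle is less the integrality step than the tightness of the estimates: both conclusions must come out of a \emph{single} potential $\phi$, and the two margins $\tfrac{41}{14}<3$ and $\tfrac{24}{7}<4$ are narrow, so the case analysis for $G_0$ has to be carried out carefully to make sure no better distribution of the three pebbles of $G_0$ beats $\tfrac{5}{2}$ when $p(A)\leq 2$. One also has to check the edge condition $\phi(u)\leq 2\phi(v)$ for both horizontal and rung edges, and confirm that the strict rubbling inequality $\phi(u)\leq \phi(v)+\phi(w)$ follows automatically; after that the argument is routine.
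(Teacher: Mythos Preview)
Your proposal is correct and follows essentially the same approach as the paper: define the natural weight function on rungs to the left of $A$, partition those rungs into consecutive $P_3\square P_2$ blocks, bound each block's contribution using the hypothesis, and take the floor. The paper presents this more tersely, simply invoking the weight argument and writing the two estimates $\lfloor 3+\tfrac{3}{8}\sum_{i\ge 0}8^{-i}\rfloor=3$ and $\lfloor 2+\tfrac{1}{2}+\tfrac{3}{8}\sum_{i\ge 0}8^{-i}\rfloor=2$; your explicit potential $\phi$ and verification of the edge inequality $\phi(u)\le 2\phi(v)$ make the same computation rigorous, but add nothing new.
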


\begin{proof}
The proof is based on an idea similar to the one we used in the previous subsection. Partition the graph to disjoint $P_3\square P_2$ subgraphs. By the assumption, all of these disjoint subgraphs may contain at most 3 pebbles. When $p(A)=3$ we obtain the following estimate which completes the proof of this case:
$$p_T(\overline{A})+p_T(\underline{A})\leq \left\lfloor 3+\frac{3}{8}\sum_{i=0}^{\infty}\left(\frac{1}{8}\right )^i\right \rfloor=\left \lfloor 3+\frac{3}{7} \right \rfloor =3.$$
When $p(A)\leq 2$, then the third pebble of the subgraph is not on $A$, therefore its contribution is at most $\frac12$:
$$p_T(\overline{A})+p_T(\underline{A})\leq \left\lfloor 2+\frac{1}{2}+\frac{3}{8}\sum_{i=0}^{\infty}\left(\frac{1}{8}\right )^i\right \rfloor=\left \lfloor 2+\frac{1}{2}+\frac{3}{7} \right \rfloor =2.$$
\end{proof}

To continue the proof we have to find reduction methods for the cases when $R$ contains three pebbles. There are sixteen different cases, these are shown on Fig. \ref{3asok} with proper reduction methods.

\begin{figure}
\includegraphics[scale=0.45]{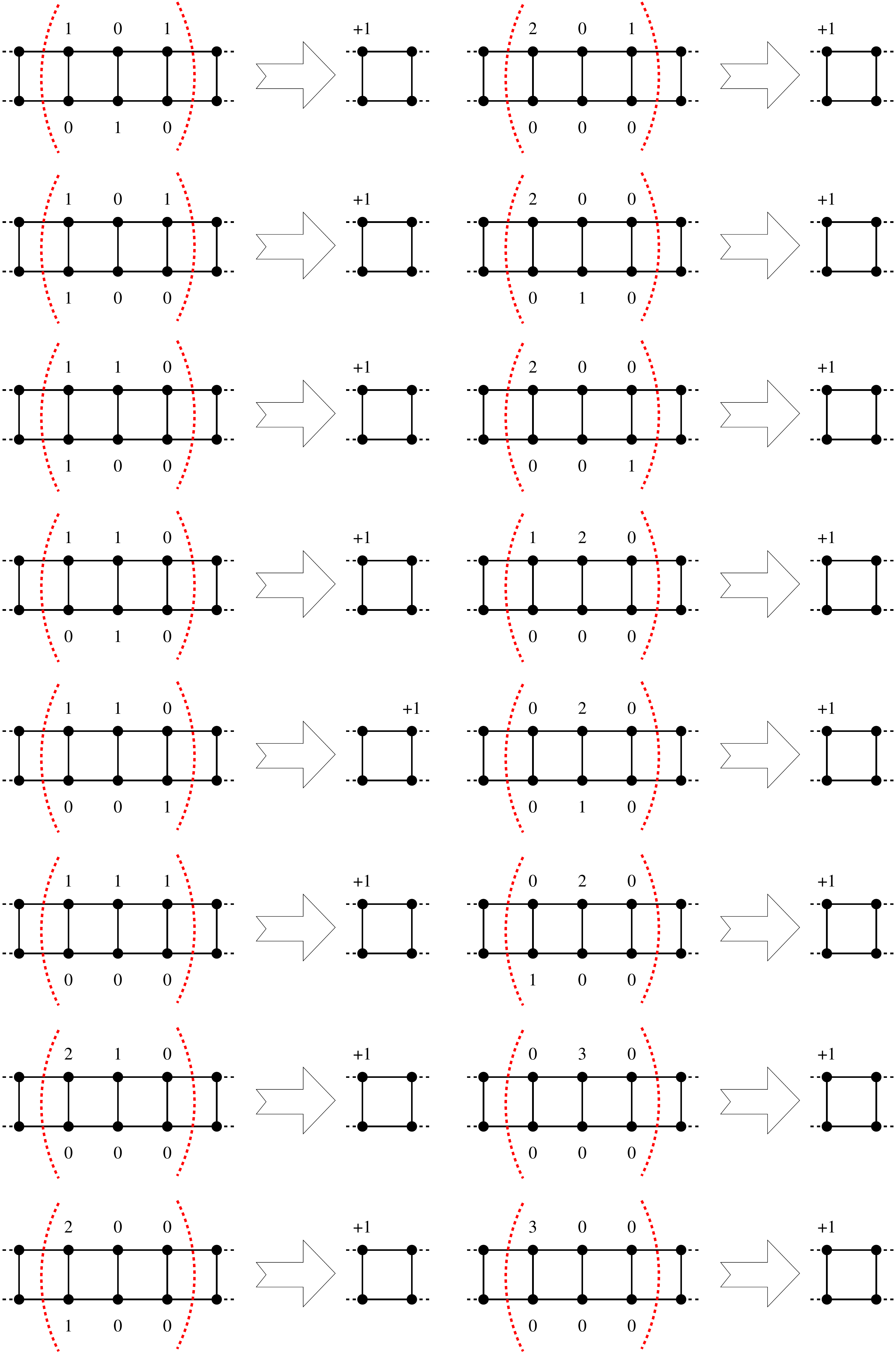}
\caption{Reduction methods when every $P_3\square P_2$ contains at most three pebbles.}
\label{3asok}
\end{figure}

 We prove the solvability of the reduced pebbling distribution for one case, and leave the remaining ones for the reader to check. It is enough to check that the right (left) reachability of the vertices of rung $A$ ($B$) are not decreasing, and all of them remain reachable.

\par Consider the case $p(\overline{l})=p(\underline{x})=p(\overline{r} )=1$ and $p(\underline{l})=p(\overline{x})=p(\underline{r})=0$. A proper reducing method is the following: Place a pebble at $\overline{A}$. 
Since in this case any $P_3\square P_2$ subgraph contains at most 3 pebbles, we have that $p(A)\leq 1$ and $p(B)\leq 1$ (otherwise the $P_3\square P_2$ subgraph shifted one to the left or to the right would contain $\ge 4$ pebbles). Lemma \ref{szummalemma} shows that $A$ is not left 3-reachable under $p$ (it can not have 3 pebbles under $p$). 
This result shows that rung $r$ is not left 2-reachable under $p$. Hence a vertex of $B$ can get only one pebble from $r$ by a strict rubbling move and not left 2-reachable. Similar statement holds for rungs $l$ and $A$ when we swap left and right directions. Now we can show that either there is a pebble on $B$ or its right neighbours are right reachable. 

 To show this, assume the contrary, so $p(B)=0$ and one of its right neighbour, $u$, is not right reachable. Then the other right neighbour cannot be right 2-reachable. This means that we can not reach the vertex of $B$ which is adjacent to $u$ without the use of the other vertex of $B$. But to move there a pebble, we consume all the pebbles which can be moved to the neighbourhood of $B$. So this vertex of $B$ cannot be reached. 

\par Moreover, the vertices of $B$ can be left reachable if and only if $p(B)=1$. There is also a similar fact for $A$. Now we need to check the reachability of the four vertices.
\begin{itemize}
\item $\overline{A}$: We place a pebble at this vertex, so it is right reachable.
\item $\underline{A}$: $l$ is not right 2-reachable, hence the extra pebble at $\overline{A}$ can act as a pebble of $\underline{l}$ when we want to reach $\underline{A}$ from $p^R$. 
\item $\overline{B}$:  If it is left reachable under $p$ then it is also left reachable under $p^R$ with the help of $\overline{A}$'s pebble. Otherwise this pebble assures reachability of $\overline{B}$ under $p^R$.
\item $\underline{B}$: $R_p(\underline{A})=0$, thus the addition of an extra pebble at $\overline{A}$ makes $\underline{A}$ left reachable. So if $B$ has a pebble then $\underline{B}$ is left reachable under $p^R$, otherwise simply reachable.   
\end{itemize} 
\par The proof of the solvability of the reduced distributions in the first 14 cases uses same tools and ideas. We can reduce Case 15 and Case 16 to Case 10 and Case 14 with the shifting technique.     

\subsection{Distributions where none of the $P_3\square P_2$ subgraphs contain more than two pebbles } \label{maxketto}

In this subsection we show that if $p$ is solvable and none of the $P_3\square P_2$ subgraphs contain more than two pebbles then all of them contain exactly two. 

\begin{lemma}
Let $p$ be a distribution such that every $P_3\square P_2$ subgraph contains at most 2 pebbles. If there is an $R=P_3\square P_2$ subgraph which satisfies $p(R)<2$, then $p$ is not solvable.
\label{keves}
\end{lemma}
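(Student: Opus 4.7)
I would show that $p$ is not solvable by exhibiting a vertex whose weight function (Lemma~\ref{weight-lemma}) is strictly below $1$. Let $R$ consist of rungs $i-1,i,i+1$, write $r_j:=p(v_j)+p(w_j)$, so the hypothesis reads $r_{j-1}+r_j+r_{j+1}\le 2$ for every $j$, together with $r_{i-1}+r_i+r_{i+1}\le 1$. The plan is to split on whether $p(R)=0$ or $p(R)=1$.

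The case $p(R)=0$ is handled by bounding the sum of weights at the middle vertices of $R$. For each rung $j$ the combined contribution to $w_p(v_i)+w_p(w_i)$ is $(1/2)^{|j-i|}+(1/2)^{|j-i|+1}=(3/2)(1/2)^{|j-i|}$ regardless of how the $r_j$ pebbles are split between the two rows, so
\[ w_p(v_i)+w_p(w_i)=\tfrac{3}{2}\sum_j r_j\bigl(\tfrac{1}{2}\bigr)^{|j-i|}. \]
Using $r_{i-1}=r_i=r_{i+1}=0$ together with the 3-consecutive constraint, the inner sum is maximised by the extremal pattern $r_{i\pm 2}=2,\ r_{i\pm 3}=r_{i\pm 4}=0,\ r_{i\pm 5}=2,\ldots$ (one checks by a local exchange argument that moving mass from the closer rung to a pair at distances $(2,3)$ strictly decreases the weighted sum, so the pattern with all mass at the closest rung of each period dominates). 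Each side of $i$ then contributes the geometric series $\tfrac{1}{2}+\tfrac{1}{16}+\tfrac{1}{128}+\cdots=\tfrac{4}{7}$, giving $w_p(v_i)+w_p(w_i)\le\tfrac{12}{7}<2$, and therefore $w_p(v_i)<1$ or $w_p(w_i)<1$; the corresponding vertex is unreachable, so $p$ is not solvable.

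The case $p(R)=1$ is where the real work lies. I would first try to shift $R$ by one or two rungs to find a new $P_3\square P_2$ subgraph $R'$ with $p(R')=0$; when such a shift exists, the previous case applied to $R'$ completes the proof. Otherwise the 3-consecutive constraint forces the local pattern around the lone pebble of $R$ to be one of a small collection of configurations (for example a pattern locally of the form $(\dots,2,0,0,\,1\,,0,0,2,\dots)$ with the lone pebble flanked by high-density triples). For each such configuration the sum bound $w_p(v_i)+w_p(w_i)$ can exceed $2$, so I would instead compute the sum for a shifted window $R'=\{j_0-2,j_0-1,j_0\}$ whose middle rung $j_0-1$ lies between the lone pebble and the nearest high-density triple: in the configurations that survive the shifting step, this shifted middle pulls the inner sum $\sum_j r_j(1/2)^{|j-(j_0-1)|}$ below $\tfrac{4}{3}$, restoring $w_p(v_{j_0-1})+w_p(w_{j_0-1})<2$. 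In the few remaining patterns where even this refined sum bound is too weak, one falls back on the reachability framework built in Corollary~\ref{p-kov}, using the $A$-biased-sequence machinery to show that the single pebble of $R$ together with the externally allowed pebbles cannot be routed by strict rubbling to cover both $v_i$ and $w_i$.

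The main obstacle is precisely this last layer of the $p(R)=1$ analysis: the clean sum bound from Case~$p(R)=0$ is no longer automatic, and one has to do a finite case check over the few admissible local rung-patterns (and, within each, over the upper/lower placement of the lone pebble) to verify that some vertex is unreachable either by a sharper weight estimate at a shifted middle or by a direct strict-rubbling obstruction. The finiteness of the case list is guaranteed by how tight the 3-consecutive constraint becomes around $R$ when no shift produces a zero subgraph.
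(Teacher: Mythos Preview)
Your $p(R)=0$ case is correct: the sum $w_p(v_i)+w_p(w_i)=\tfrac{3}{2}\sum_j r_j 2^{-|j-i|}$ together with the partition bound $\sum_{k\ge 2} r_{i+k}2^{-k}\le 2\sum_{m\ge 0}2^{-(2+3m)}=\tfrac{4}{7}$ does give $\tfrac{12}{7}<2$, and the local-exchange justification can be replaced by this one-line partition estimate.

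The $p(R)=1$ case, however, has a genuine gap. The weight bound is simply not strong enough there. Take the pebble on the middle rung, say $p(v_i)=1$, and try to show $w_i$ is unreachable. From $r_i=1$ you only get $r_{i\pm 2}\le 1$, and the adversary may place all outside pebbles on the $w$-row. A feasible pattern such as $r_{i+2},r_{i+3},r_{i+4},\dots=1,1,0,1,1,0,\dots$ already gives $\sum_{k\ge 2}r_{i+k}2^{-k}=\tfrac{3}{7}$ on each side, so $w_p(w_i)$ can be as large as $\tfrac{1}{2}+\tfrac{6}{7}>1$. Your shifted-middle idea rescues some configurations (e.g.\ the clean $\dots,2,0,0,1,0,0,2,\dots$ pattern), but you never verify it for the full list of patterns surviving the shift step, and your final fallback is misdirected: Corollary~\ref{p-kov} and the $A$-biased machinery are designed to certify that a \emph{reduced} distribution \emph{remains} solvable, not to prove non-solvability of $p$. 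They give no obstruction in the direction you need.

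The paper avoids all of this by first isolating the key fact (Proposition~\ref{keves2}): under the global hypothesis, a rung $g$ is $2$-reachable iff $p(g)=2$, and a $2$-reachable rung can receive no pebble from outside. This is proved by exactly the partition/weight estimate you used in the $p(R)=0$ case, but phrased at the rung level. With that in hand the lemma is three lines: in each of the three placements of the single pebble in $R$ one checks that the adjacent rungs $A,B$ have at most one pebble, hence are not $2$-reachable, hence cannot feed rung $x$ enough to reach its empty vertex. The moral is that the weight argument should be packaged once as a rung-level $2$-reachability criterion rather than re-run vertex by vertex.
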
  

\begin{prop}
Let $p$ be a distribution such that every $P_3\square P_2$ subgraph contains at most 2 pebbles. A rung $g$ is 2-reachable from $p$ if and only if $p(g)=2$. If a rung is 2-reachable then it can not get a pebble by a rubbling move.
\label{keves2}
\end{prop}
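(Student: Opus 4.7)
The 'if' direction of the biconditional is immediate: if $p(g)=2$, the empty rubbling sequence already witnesses $p_T(g)\ge 2$, so $g$ is 2-reachable.

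For the 'only if' direction together with the second assertion, my plan is to establish the uniform bound $p_T(g)\le p(g)$ when $p(g)\ge 1$ and $p_T(g)\le 1$ when $p(g)=0$, for every executable rubbling sequence $T$. The main tool is the weight invariant
\[
W_p(g) \;=\; \sum_{v\in V(G)} \bigl(\tfrac12\bigr)^{d(v,g)}\,p(v),\qquad d(v,g)\;=\;\min\bigl(d(v,\overline g),d(v,\underline g)\bigr),
\]
which, as in the proof of Lemma~\ref{weight-lemma}, is non-increasing along every rubbling move, so $p_T(g)\le W_{p_T}(g)\le W_p(g)$. The hypothesis $p(g_{k-1})+p(g_k)+p(g_{k+1})\le 2$ is maximised by distributions whose rung-sums $a_k=p(g_k)$ are period-$3$ and, up to cyclic shift, follow one of the patterns $(2,0,0)$ or $(1,1,0)$. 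Summing the resulting geometric series with ratio $1/8$ yields the explicit bounds $W_p(g)\le 12/7$ when $p(g)=0$, $W_p(g)\le 15/7$ when $p(g)=1$, and $W_p(g)\le 18/7$ when $p(g)=2$. Integrality immediately closes the case $p(g)=0$ (since $\lfloor 12/7\rfloor=1$, so $g$ is not 2-reachable) and the case $p(g)=2$ for the second assertion (since $\lfloor 18/7\rfloor=2$, the rung's pebble count cannot strictly exceed its initial value).

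The main obstacle is the intermediate case $p(g)=1$. The weight bound $15/7$ rounds down only to $2$, and the maximum $15/7$ is in fact attained precisely on the symmetric pattern with $p(g_0)=p(g_1)=1$ (or its reflection with $p(g_{-1})=1$). To close this case I plan to apply Lemma~\ref{egeszresz} to the two vertices of $g$ separately: an analogous period-$3$ calculation gives strict inequalities $W_p(\overline g)<2$ and $W_p(\underline g)<2$, hence $p_T(\overline g)\le 1$ and $p_T(\underline g)\le 1$ individually, and it remains to rule out the \emph{simultaneous} equality $p_T(\overline g)=p_T(\underline g)=1$. For this I would trace the single initial pebble on $g$: achieving $p_T(\overline g)=1$ either preserves the initial pebble on $\overline g$ or consumes it in a rubbling move landing on $\overline g$, and in each case the contributions needed to also reach $\underline g$ must reuse weight already counted toward $\overline g$, contradicting the non-increasing property of $W$. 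A short sub-case analysis on whether the initial pebble at $g$ sits on $\overline g$ or $\underline g$ and on the position of the adjacent $(1,1)$-block finishes the argument. The chief difficulty is precisely this near-tight case, where the weight inequality alone is insufficient and the period-$3$ structure forced by the hypothesis must be exploited combinatorially.
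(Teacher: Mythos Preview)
Your treatment of the cases $p(g)=0$ and $p(g)=2$ via the rung-distance weight $W_p(g)$ is correct and gives the sharp bounds $12/7$ and $18/7$, and this indeed matches the spirit of the paper's ``partition method'' hint.

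The gap is in the case $p(g)=1$. You claim that ``an analogous period-$3$ calculation gives strict inequalities $W_p(\overline g)<2$ and $W_p(\underline g)<2$''; this is false. Take the extremal period-$3$ pattern with $a_{-1}=0$, $a_0=a_1=1$, $a_2=0$, $a_3=a_4=1,\dots$ (and symmetrically on the left), and place \emph{all} pebbles on the top row. Then the ordinary vertex weight is
\[
w_p(\overline{g_0}) \;=\; \sum_{k}\Bigl(\tfrac12\Bigr)^{|k|}a_k \;=\; W_p(g_0)\;=\;\tfrac{15}{7}\;>\;2,
\]
so Lemma~\ref{egeszresz} (or its two-sided analogue) only yields $p_T(\overline{g_0})\le 2$, not $\le 1$. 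Your subsequent step ``hence $p_T(\overline g)\le 1$ and $p_T(\underline g)\le 1$ individually'' therefore does not follow, and the rest of the plan for this case collapses. The final ``trace the initial pebble'' sub-case analysis might be salvageable in principle, but it has to do essentially all the work by itself, and as sketched it is too vague to evaluate.

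What the paper's partition method actually gives cleanly is the \emph{one-sided} bound: with $g$ at the boundary of its $P_3\square P_2$ block one gets $Lw_p(g)\le a_0+\tfrac12(2-a_0)+\tfrac{2}{7}<2$ when $a_0\le 1$, hence $L_p(g)\le 1$ and $R_p(g)\le 1$. Closing the two-sided case then requires an additional short argument (e.g.\ collapse each rung to a vertex, reducing to a path, and check that when $a_0=1$ the constraint $a_{-1}+a_1\le 1$ forces one of $v_{\pm 1}$ to be unreachable from its own side, ruling out the strict-rubbling bonus). That is the missing idea your proposal needs in the $p(g)=1$ case.
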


It is easy to show the proposition with the partition method described in the proof of Lemma \ref{szummalemma}.

\begin{proofof}{Lemma \ref{keves}}
Let $R$ be a $P_3\square P_2$ subgraph satisfying $p(R)\leq 1$.  There are three essentially different cases (considering symmetry):
\begin{itemize}
\item $p(R)=0$: Rung $A$ and rung $B$ contains at most two pebbles, hence one of the vertices of rung $x$ is not reachable.
\item $p(x)=1$: The upper bound on $p(P_3\square P_2)$ implies that $p(A)$ and $p(B)$ is less than or equal to one. We can not move an extra pebble to rung $A$ and $B$ due to the previous proposition. Hence the vertex of rung $x$ which does not have a pebble is not reachable.
\item $p(l)=1$: Clearly $B$ neither contains 3 pebbles nor can get an additional pebble. Furthermore $A$ is not 2-reachable, thus one of the vertices of rung $x$ is not reachable again.   
\end{itemize}
\end{proofof}


Now we are prepared to complete the proof of the main result.


\begin{proofof}{Theorem \ref{letratetel}}\label{fobiz}
	Let $G$ be a counterexample where $V(G)$ is minimal, and let $p$ be the optimal distribution on $G$.
If there exists a subgraph $R=P_3\square P_2$ such that $p(R)\geq 3$ then we can apply one of the reduction methods described in the previous section and get a solvable distribution $p^R$ on graph $G^R$. $|V(G^R)|=|V(G)|-6$, and $\varrho_{\opt}(G^R)\leq p^R(G^R)=p(G)-2=\varrho_{\opt}(G)-2$. Thus  $G^R$ is also a counterexample, which contradicts the minimality of $G$. So we can assume that every $P_3\square P_2$ subgraph contains at most two pebbles. By Lemma \ref{keves} every $P_3\square P_2$ subgraph contains exactly two pebbles. The solvability of $p$ requires that the pattern has to start and end with two 1s or one 2 because of Proposition \ref{keves2}. Thus  number of pebbles on the rungs must have the pattern shown on Fig. \ref{pattern}. 
\begin{figure}[htb]
\centering
\includegraphics[scale=0.6]{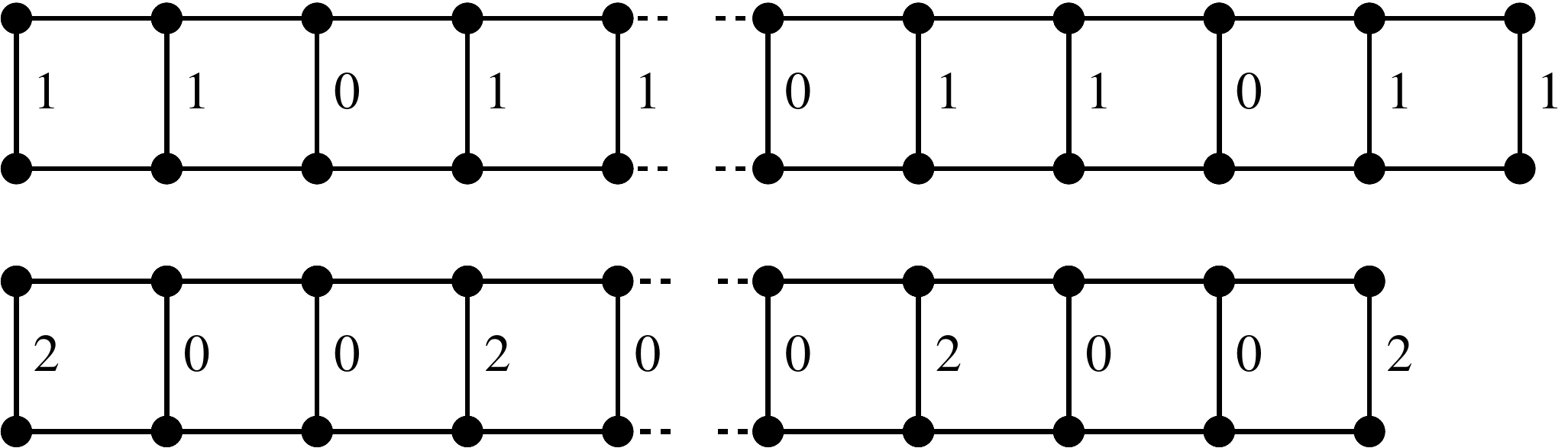}
\label{pattern}
\caption{Solvable distributions where each $P_3\square P_2$ subgraph contains exactly two pebbles, have to have one of these patterns. The numbers show that how many pebbles placed on the rungs.}
\end{figure}
 However, this means that $G$ is not a counterexample. 

Thus we proved the  lower bound, hence together with the upper bound (Lemma \ref{letratetelfelso}) the proof of  Theorem \ref{letratetel} is complete. 
 \end{proofof}

\section {The 2-optimal rubbling number of the circle}

In this section the 2-optimal rubbling number of the cycle is
determined. It is interesting on its own, but it will be needed in the
next section.

To do this we are going to use \emph{smoothing}. Smoothing is 
well known technique to determine optimal pebbling number. It was invented
in \cite{ladder}, and its modification \emph{rolling} is also used for
optimal rubbling in \cite{BelSie}. Smoothing utilizes the following
observation: It is not worth  putting many pebbles at a vertex of degree 2.

\begin{definition}
Let $p$ be a pebbling distribution on the graph $G$. Let $v$ be a vertex of degree 2 such that $p(v)\geq 3$. A \emph{smoothing move} from $v$ removes two pebbles from $v$ and adds one pebble at both neighbours of $v$.
\end{definition}

\begin{lemma}\cite{ladder}
Let $p$ be a pebbling distribution on $G$.  Let $u$ and $v$ be different vertices of $G$ such that $u$ is $k$-reachable and $p(v)\geq 3$. After the application of a smoothing move from vertex $v$, $v$ remains reachable and $u$ remains $k$-reachable.
\end{lemma}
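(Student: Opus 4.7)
The first assertion is immediate: after the smoothing move we have $p'(v) = p(v) - 2 \geq 1$, so $v$ is reachable from $p'$ via the empty rubbling sequence.

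For the second, let $T = (t_1, \ldots, t_r)$ be an executable rubbling sequence from $p$ with $p_T(u) \geq k$, let $a, b$ be the two neighbours of $v$, and write $\delta$ for the smoothing shift, so that $p' = p + \delta$ with $\delta(v) = -2$ and $\delta(a) = \delta(b) = +1$. The plan is to construct an executable sequence $T'$ from $p'$ that also reaches $u$ with at least $k$ pebbles, by simulating $T$ step by step and introducing local modifications that absorb the smoothing surplus at $a$ and $b$. The guiding intuition is that because $v$ has degree $2$, every pebble originally at $v$ that eventually contributes to the count at $u$ must pass through $a$ or $b$ during the execution of $T$; the two pebbles that smoothing has pre-deposited at $a,b$ therefore act as a ``head start'' that $T'$ can exploit to compensate for the deficit of $2$ at $v$.

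I process $T$ step by step from $p'$ and maintain the invariant that the running difference between the simulated $p'$-trajectory and the original $p$-trajectory is supported on $\{v, a, b\}$, has its non-positive part only at $v$ (of magnitude at most $2$), and has non-negative total. Because rubbling moves act linearly on pebble functions, if this invariant is preserved throughout and the simulation terminates, then $p'_{T'}(u) \geq p_T(u) \geq k$, since $u \neq v$. Several kinds of local modification preserve the invariant while resolving executability failures that would otherwise arise at $v$: replacing a pebbling move $(v, v \kisnyil w)$ in $T$ by the strict rubbling $(v, z \kisnyil w)$, which consumes the smoothing surplus at the other neighbour $z$; replacing two consecutive moves $(v, y \kisnyil w),(v, y' \kisnyil w)$ of $T$ by the single move $(y, y' \kisnyil w)$, absorbing the surplus that is already sitting at $w$; or, when a stretch of strict rubblings of $T$ from $v$ would exhaust the simulated trajectory at $v$, inserting the rubbling $(a, b \kisnyil v)$ to pool the two surpluses back into a single pebble at $v$. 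Each modification spends at least one of the two available surpluses and strictly decreases the deficit at $v$ or the number of $T$-moves still sourcing from $v$.

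The main obstacle is that no single fixed substitution rule handles every configuration, so the proof needs a careful case analysis that exploits the fact that $v$ communicates with the rest of $G$ only through $a$ and $b$, which drastically restricts the patterns of $T$-moves that can interact with $v$. A clean formal write-up proceeds by induction on the length of $T$ (or on the number of $T$-moves sourcing from $v$), handling the first problematic move by the appropriate one of the modifications above and invoking the inductive hypothesis on the tail. Because only two smoothing surpluses exist and each modification consumes at least one of them, the procedure terminates after finitely many steps with the invariant intact, yielding $p'_{T'}(u) \geq p_T(u) \geq k$ as required.
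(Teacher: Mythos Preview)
The paper does not prove this lemma: it is quoted from \cite{ladder} with no argument beyond the one-line remark that it ``works for rubbling, too'', so there is no in-paper proof to compare your attempt against.

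Your first assertion is correct. Your outline for the second has the right intuition---since $d(v)=2$, every pebble that leaves $v$ must exit through $a$ or $b$, so the smoothing surplus there should compensate---but the specific modifications you list do not all work as written. The first one replaces the pebbling move $(v,v\kisnyil w)$ by the strict rubbling $(v,z\kisnyil w)$ where $z$ is the \emph{other} neighbour of $v$; that move is only legal if $z$ is adjacent to $w$, i.e.\ if $a$ and $b$ are adjacent, which is nowhere assumed. The third one, inserting $(a,b\kisnyil v)$, leaves the running difference equal to $-e_v$, whose total is $-1$; this breaks your stated invariant of ``non-negative total'', and more concretely, once the whole deficit sits at $v$ with both surpluses already spent, any later step of $T$ that drains the $p$-trajectory at $v$ to zero will send the simulated $p'$-trajectory negative with nothing left to absorb it. You explicitly defer the ``careful case analysis'' that would show some admissible modification is always available and preserves executability, so what you have is a plausible plan rather than a proof. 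If you want to complete it, the standard route (as in \cite{ladder}) is to first pass to an acyclic or irredundant sequence, so that on each of the two edges at $v$ pebbles flow in at most one direction; the few resulting patterns of moves incident to $v$ can then be handled one by one.
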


This lemma is used for pebbling, but it works for rubbling, too. Also, we can state something stronger in case of rubbling: 
\begin{lemma} If $p(v)\geq 3$, $d(v)=2$ and $u$ is 2-reachable under $p$, then $u$ is 2-reachable under $q$ which is obtained from $p$ by making a smoothing move from $v$. 
\end{lemma}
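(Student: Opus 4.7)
The plan is to split into two cases based on whether $u=v$ or not, and observe that the only real content beyond the previous lemma is the case $u=v$.

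For $u\neq v$: I will invoke the previous lemma (the pebbling smoothing lemma, which the paper asserts carries over to rubbling) with $k=2$. This immediately yields that $u$ remains $2$-reachable under $q$, so there is nothing further to do here.

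For $u=v$: this is the case where Lemma 2 genuinely strengthens the previous lemma, which would only guarantee that $v$ remains (1-)reachable after smoothing, rather than $2$-reachable. Let $n_1,n_2$ denote the two neighbors of $v$ (using $d(v)=2$). After smoothing we have $q(v)=p(v)-2\geq 1$ and $q(n_i)=p(n_i)+1\geq 1$ for $i=1,2$. I would then execute the single strict rubbling move $(n_1,n_2\kisnyil v)$, which is legal under $q$ since $q(n_1)\geq 1$ and $q(n_2)\geq 1$, and produces a pebble at $v$ on top of the one already there, giving a distribution with at least $2$ pebbles at $v$.

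There is no real obstacle: the entire content of the lemma beyond the previous one is the observation that the extra single pebbles deposited at $n_1$ and $n_2$ by the smoothing move can be combined via a \emph{strict} rubbling move to yield a second pebble at $v$. This is precisely why the statement fails to extend verbatim to pebbling: in the pure pebbling setting, when $p(n_1)=p(n_2)=0$ the smoothed values $q(n_1)=q(n_2)=1$ are insufficient to execute any pebbling move toward $v$, and only one of the two extra pebbles at the neighbors can be routed back to $v$ (and only after a longer detour). The availability of strict rubbling moves removes this obstruction at zero cost in pebbles and in one step, yielding the stronger conclusion.
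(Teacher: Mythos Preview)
Your proof is correct and follows essentially the same approach as the paper: the case $u\neq v$ is deferred to the previous smoothing lemma with $k=2$, and for $u=v$ you observe that $q(v)\geq 1$ while both neighbors receive a pebble, so a single strict rubbling move yields a second pebble at $v$. Your additional commentary on why the analogous statement fails for pure pebbling is accurate but goes beyond what the paper records.
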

\begin{proof}
The only part which is not straightforward from the smoothing lemma is why $v$ is 2-reachable. In turn, this is easy: Both neighbours of $v$ have a pebble, hence we can move a pebble to $v$ by a strict rubbling move. Furthermore $v$ had a pebble before the rubbling move. 
\end{proof}
\begin{theorem}
The $2$-optimal rubbling number of $C_n$ is $n$. 
\label{kor2tetel}
\end{theorem}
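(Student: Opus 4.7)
The plan is to prove the upper bound and lower bound on the $2$-optimal rubbling number of $C_n$ separately.

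\emph{Upper bound.} I would place one pebble on each of the $n$ vertices of $C_n$. For any vertex $v$, both of its neighbours carry one pebble, so the strict rubbling move from those two neighbours places an additional pebble at $v$, making a total of two pebbles at $v$. Hence every vertex is $2$-reachable and the $2$-optimal rubbling number of $C_n$ is at most $n$.

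\emph{Lower bound.} I argue by contradiction. Suppose there exists a $2$-solvable distribution $p$ on $C_n$ with $|p|\le n-1$. Every vertex of $C_n$ has degree $2$, so the strengthened smoothing lemma for rubbling just above applies at any vertex carrying at least $3$ pebbles. Each smoothing move preserves $2$-solvability and leaves $|p|$ unchanged (two pebbles leave $v$, one is added at each of its two neighbours). After finitely many smoothings we may therefore assume $p(v)\in\{0,1,2\}$ for every $v$, the distribution is still $2$-solvable, and $|p|\le n-1$. Writing $a$, $b$, $c$ for the numbers of vertices with $p$-value $0$, $1$, $2$ respectively, we have $a+b+c=n$ and $b+2c=|p|\le n-1$, hence $a\ge c+1$; in particular the set $Z:=\{v:p(v)=0\}$ is nonempty.

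The key necessary condition I plan to exploit is that rubbling moves never increase the weight function $w_p(x)=\sum_u p(u)(1/2)^{d(u,x)}$ at a fixed target $x$, so any $2$-reachable vertex satisfies $w_p(x)\ge 2$. By cycle symmetry the constant $S_n:=\sum_u(1/2)^{d(u,x)}$ is independent of $x$, and an elementary sum yields $S_n<3$ for every $n\ge 3$. My aim is to exhibit a $0$-vertex $x^*$ with $w_p(x^*)<2$. When $c=0$ the argument is immediate: for any $x^*\in Z$, $w_p(x^*)\le\sum_{u\neq x^*}(1/2)^{d(u,x^*)}=S_n-1<2$, contradicting $2$-reachability.

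When $c\ge 1$ the contributions of nearby $2$-vertices may lift $w_p$ above $2$ at certain $0$-vertices, so $x^*$ must be chosen carefully. Letting $T$ denote the set of $2$-vertices and splitting the weight sum by pebble value, one rewrites
\[w_p(x^*)=(S_n-1)+\sum_{w\in T}(1/2)^{d(w,x^*)}-\sum_{y\in Z\setminus\{x^*\}}(1/2)^{d(y,x^*)},\]
so $w_p(x^*)<2$ reduces to $\sum_{w\in T}(1/2)^{d(w,x^*)}<(3-S_n)+\sum_{y\in Z\setminus\{x^*\}}(1/2)^{d(y,x^*)}$. The main obstacle will be producing such an $x^*$. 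I expect to succeed by picking $x^*$ to be a $0$-vertex deep inside the longest arc between two consecutive $2$-vertices on $C_n$: the inequality $a\ge c+1$ together with pigeonhole guarantees that this arc contains at least two $0$-vertices, giving a nonnegligible term in $\sum_{y\in Z\setminus\{x^*\}}(1/2)^{d(y,x^*)}$, while $x^*$ sits at distance at least roughly $n/(2c)$ from every $2$-vertex, making $\sum_{w\in T}(1/2)^{d(w,x^*)}$ correspondingly small. Careful bookkeeping should then give $w_p(x^*)<2$ and hence the contradiction.
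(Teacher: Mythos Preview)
Your upper bound and the smoothing reduction to a distribution with values in $\{0,1,2\}$ and $a\ge c+1$ are correct and match the paper. The gap is in the lower-bound step where you try to produce a $0$-vertex $x^*$ with $w_p(x^*)<2$: this cannot be done in general from the information you retain. Take $n\ge 8$ and the smoothed distribution $p=(2,0,1,1,\ldots,1,0)$ on $C_n$ (so $c=1$, $a=2$, $|p|=n-1$). Using your own identity,
\[
w_p(1)=w_p(n-1)=(S_n-1)+\tfrac12-\tfrac14=S_n-\tfrac34,
\]
and since $S_n\ge 3-3\cdot 2^{-4}$ for $n\ge 8$ this is $\ge 33/16>2$. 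Thus \emph{neither} $0$-vertex has weight below $2$, and your pigeonhole heuristic breaks down completely here: with a single $2$-vertex the only ``arc between consecutive $2$-vertices'' is the whole cycle minus one point, and its two $0$-vertices sit at its extremities, not ``deep inside'' and certainly not at distance $\approx n/(2c)$ from the $2$-vertex. The distribution $p$ is in fact not $2$-solvable (vertex $1$ is not $2$-reachable), but the weight inequality cannot see this, so your argument would stall without importing a genuinely new idea.

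The paper closes exactly this gap by replacing the weight bound with a combinatorial \emph{friends} lemma: in a smoothed $2$-solvable distribution, every unoccupied vertex $v$ must have, along each of the two arcs leaving $v$, an occupied path terminating in a vertex carrying two pebbles. Since each $2$-vertex can serve as such a terminal for at most two unoccupied vertices (one on each side), a double count gives $a\le c$, contradicting $a=c+1$. This is precisely the finer structural fact that the weight function misses.
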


Clearly, if $p(v)\geq 3$, $d(v)=2$ and $u$ is 2-reachable under $p$, then $u$ is 2-reachable under $q$ which is obtained from $p$ by making a smoothing move from $v$. When no smoothing move is available, we say that the distribution is smooth.

\begin{proofof}{Theorem \ref{kor2tetel}}
Consider vertices $v$ and $w$ of the circle. There are exactly two paths in the circle which have these two vertices as end vertices. When every inner vertex of one of these paths is occupied, we say that $v$ and $w$ are \textit{friends}. 
Assume that $\varrho_{\text{2-opt}}(C_n)<n$, so there is a 2-solvable distribution $p$ with size $n-1$. Apply smoothing moves repeatedly for every vertex which contains at least three pebbles. $p$ has less than $n$ vertices, thus this process ends in finitely many steps. Now we have a 2-solvable pebbling distribution $q$ such that $q(v)\leq 2$ for every vertex $v$. Denote the number of vertices containing $i$ pebbles with $x_i$. We have the following two equalities:
$$x_2+x_1+x_0=n,$$
$$2x_2+x_1=n-1,$$  
which imply that $x_0=x_2+1$. We also know that
$q$ is solvable and $|q|=n-1$.  Now we  show that it has the following property:
\begin{lemma}
An unoccupied vertex in distribution $q$ must have two different friends such that each of them contains two pebbles.
\end{lemma}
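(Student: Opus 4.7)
The plan is to prove the contrapositive: if $v$ is unoccupied and has at most one friend $w$ with $q(w)=2$, then $v$ is not $2$-reachable from $q$, contradicting $2$-solvability. Let $u_{-1}$ and $u_1$ denote the two neighbors of $v$ in $C_n$. Removing $v$ yields a path, which I split into a \emph{left arc} (containing $u_{-1}$) and a \emph{right arc} (containing $u_1$); write $\hat L$ (resp.\ $\hat R$) for the maximum number of pebbles one can place at $u_{-1}$ (resp.\ $u_1$) using only rubbling moves confined to the left (resp.\ right) arc. Since each arc is a path, the greedy-rubbling proof of Lemma~\ref{egeszresz} shows that $\hat L=\lfloor Lw_q(u_{-1})\rfloor$ and $\hat R=\lfloor Rw_q(u_1)\rfloor$, where $Lw_q(u_{-1})$ and $Rw_q(u_1)$ are the weights taken along the respective arcs.

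First I will bound $\hat L+\hat R\le 3$. On a side without a $2$-pebble friend of $v$—say the left—the consecutive occupied vertices $u_{-1},\dots,u_{-k_L}$ each have $q=1$, the next vertex is unoccupied (contributing $0$), and every further ``beyond'' vertex contributes at most $2\cdot(1/2)^d$. Summing the geometric series gives
\[
Lw_q(u_{-1}) \;\le\; \bigl(2-(1/2)^{k_L-1}\bigr) + (1/2)^{k_L-1}\bigl(1-(1/2)^{m_L}\bigr) \;=\; 2-(1/2)^{k_L+m_L-1} \;<\; 2,
\]
where $m_L\ge 1$ is the finite number of beyond positions on the arc, so $\hat L\le 1$. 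If the (unique) $2$-pebble friend lies on the opposite side at distance $d_0$ from $v$, an analogous computation produces $Rw_q(u_1)\le 2+(1/2)^{d_0-1}<3$, giving $\hat R\le 2$. In Case~1 (no $2$-pebble friend) this gives $\hat L,\hat R\le 1$; in Case~2 (exactly one, on one chosen side) the two bounds are $1$ and $2$. Either way $\hat L+\hat R\le 3$.

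Next, consider any executable sequence $T$ with $q_T(v)\ge 2$. After a standard normalization—iteratively pair each move that sources from $v$ with the earliest earlier move that deposited the pebble used, and delete the pair; this process terminates and does not decrease $q_T(v)$—we may assume that no move of $T$ takes a pebble from $v$. Every move that alters $q(v)$ is then a pebbling $u_{-1}\to v$ (count $a$), a pebbling $u_1\to v$ (count $b$), or a strict rubbling $(u_{-1},u_1\to v)$ (count $c$), so $q_T(v)=a+b+c$. The remaining moves split into left-arc-only and right-arc-only moves on disjoint vertex sets, and so commute and may be rescheduled to precede every move targeting $v$. By definition of $\hat L$ and $\hat R$, after this ``Phase~$1$'' the pebbles at $u_{-1}$ and $u_1$ are at most $\hat L$ and $\hat R$; the $v$-targeting moves of ``Phase~$2$'' then consume $2a+c$ pebbles from $u_{-1}$ and $2b+c$ from $u_1$. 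Hence
\[
2(a+b+c) \;=\; (2a+c)+(2b+c) \;\le\; \hat L+\hat R \;\le\; 3,
\]
so $a+b+c\le 1$, contradicting $q_T(v)\ge 2$.

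The main difficulty will be in carrying out the Phase-$1$-then-Phase-$2$ rearrangement cleanly: the claim that $\hat L$ and $\hat R$ control the \emph{total} consumption at $u_{-1}$ and $u_1$ by $v$-targeting moves (and not only their simultaneous load) rests on the commutativity of left- and right-arc moves together with the weight lemma for paths, and one must rule out wasteful ``cycling'' of pebbles through $u_{\pm 1}$ and back.
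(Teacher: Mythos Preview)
Your overall strategy---bound $2a+c$ and $2b+c$ separately by weight arguments and conclude $a+b+c\le 1$---is sound and in fact cleaner than the paper's case analysis. However, the specific mechanism you propose has a genuine gap.

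The problem is the decomposition of $C_n\setminus\{v\}$ into ``a left arc (containing $u_{-1}$) and a right arc (containing $u_1$)''. Removing $v$ from $C_n$ yields a \emph{single} path $P$ with endpoints $u_{-1}$ and $u_1$; you never say where to cut $P$ into two sub-arcs, and whatever interior point $s$ you choose, rubbling moves through $s$ can carry pebbles from one side to the other. Hence the claim that the non-$v$ moves ``split into left-arc-only and right-arc-only moves on disjoint vertex sets'' is false, and commutativity does not yield $2a+c\le\hat L$, $2b+c\le\hat R$ when $\hat L,\hat R$ are computed over disjoint sub-arcs.

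The repair is to \emph{not} split $P$ at all. Take $Lw_q(u_{-1})$ over the \emph{entire} path $P$; your computation $Lw_q(u_{-1})<2$ still holds (the ``beyond'' tail only extends further and still fits under the missing $(1/2)^{k_L-1}$). Now, in any normalized executable $T$, the weight centred at $u_{-1}$ is non-increasing under every type~(ii) move on $P$ and drops by $1$ (resp.\ $2^{-(n-2)}$) per pebble removed from $u_{-1}$ (resp.\ $u_1$) by a type~(i) move. Non-negativity of the final weight gives
\[
(2a+c)\cdot 1+(2b+c)\cdot 2^{-(n-2)}\le Lw_q(u_{-1})<2,
\]
hence $2a+c\le 1$; symmetrically $2b+c\le 2$. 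No rescheduling or disjointness is needed.

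A smaller point: your normalization (``pair each move that sources from $v$ with the earliest earlier depositing move and delete the pair'') is not obviously executability-preserving when strict rubbling moves are involved, since $(v,w\to z)$ also consumes a pebble at $w$. It is safer to invoke the no-cycle/acyclicity lemma for rubbling from Belford--Sieben to obtain a sequence that never removes a pebble from $v$.

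For comparison, the paper \emph{does} cut the cycle---but at carefully chosen points: the two nearest \emph{unoccupied} friends $u,w$ of $v$ (when at least three unoccupied vertices exist). Those empty vertices act as bottlenecks: the far segment $P_3$ can deliver at most one pebble to each of $u,w$, and that lone pebble cannot fuel any further pebbling move inward. That is what makes the paper's split legitimate. Your weight-only argument, once corrected as above, sidesteps this case analysis entirely.
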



\begin{proof}
Let $v$ be an unoccupied vertex which does not have two friends having two pebbles. 

If there are no more unoccupied vertices then $q$ is the distribution where there is an unoccupied vertex, and any other vertex has one pebble. It is clearly not 2-solvable, proving our claim in this case.

If there are exactly two unoccupied vertices, then 
there is only one vertex in the graph which has two pebbles. 
If we can move two pebbles to a vertex of degree two then we can move one of them by a pebbling move. 
A pebbling move requires that a vertex has at least two pebbles. 
It is straightforward that it is not worth moving a pebble to a vertex of  degree two  by a strict rubbling move (except if we move it to $v$), so we can only use pebbling moves except at the last move. 
So, to  move 2 pebbles to $v$, we need to pass the second pebble of the vertex having 2 pebbles to $v$ by a sequence of pebbling moves. But after we apply these pebbling moves, the neighbour of $v$ which was involved in these moves is unoccupied. No more pebbling moves are available hence we can not reach this vertex without using  the pebble of $v$. We still need to move one more pebble to $v$ but we can not do it by either a pebbling move or a strict rubbling move.
This shows that $v$ is not 2-reachable.  

Finally consider the case when there are at least three unoccupied vertices. We show that a similar contradiction will arise like in the previous case. Let $u$ and $w$ be the two unoccupied friends of $v$. These three vertices cut the graph to three paths. Denote them $P_1$, $P_2$, $P_3$, such that $v$ is between $P_1$ and $P_2$. Thus the two endvertices of $P_3$ are $u$ and $w$. Either $P_1$ or $P_2$ does not contain a vertex having 2 pebbles, we can assume it is  $P_1$, whose ends are   $v$ and $u$. Also $P_2$ contains at most one vertex having 2 pebbles. (All such vertices would be friends of $v$). 

The two ends of $P_3$ are unoccupied, all other vertices contain $\le 2$ pebbles. By the weight argument at most $\lfloor\sum_{i=1}^{n}\frac{2}{2^i}\rfloor=1$ pebble can be moved to the ends of $P_3$. However, this pebble cannot be used in any further pebbling moves, so the number of pebbles inside $P_1$ and $P_2$ cannot increase. Therefore the situation is similar to the previous case. It is possible to move 1 pebble to $v$ using the pebbles inside $P_2$, but it is not possible to move another one using pebbles of $P_1$.
\end{proof}


Each unoccupied vertex has at least two friends who have two pebbles, but each vertex having 2 pebbles has at most two unoccupied friends. Thus the number of unoccupied vertices cannot be larger than the number of the vertices having 2 pebbles. This contradicts $x_0=x_2+1$, hence every 2-solvable pebbling distribution contains at least $n$ pebbles. 

On the other hand, $\varrho_{\text{2-opt}}(C_n)\le n$, because we can place 1 pebble at each vertex to obtain a 2-solvable distribution.
\end{proofof}

\section{Optimal rubbling number of the $n$-prism}
\begin{prop}
If $k\ge 2$ then
$$\varrho_{\opt}(C_{3k-1} \square P_2)\leq\varrho_{\opt}(P_{3k-2}\square P_2)\leq 2k,$$
$$\varrho_{\opt}(C_{3k} \square P_2)\leq\varrho_{\opt}(P_{3k-1} \square P_2)\leq 2k,$$
$$\varrho_{\opt}(C_{3k+1} \square P_2)\leq\varrho_{\opt}(P_{3k} \square P_2)\leq 2k+1.$$

\label{all}

\end{prop}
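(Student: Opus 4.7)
The right inequality in each line is immediate from Theorem~\ref{letratetel}: writing $3k-2=3(k-1)+1$ gives $\varrho_{\opt}(P_{3k-2}\square P_2)=2+2(k-1)=2k$; writing $3k-1=3(k-1)+2$ gives $\varrho_{\opt}(P_{3k-1}\square P_2)=2+2(k-1)=2k$; and $3k=3k+0$ gives $\varrho_{\opt}(P_{3k}\square P_2)=1+2k=2k+1$. So only the left inequalities $\varrho_{\opt}(C_n\square P_2)\le\varrho_{\opt}(P_{n-1}\square P_2)$ require work.

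The plan for each left inequality is to lift an optimal ladder distribution to a solvable prism distribution of the same size. View $C_n\square P_2$ as the graph obtained from the ladder $P_{n-1}\square P_2$ by adjoining one new rung $E=\{\overline{E},\underline{E}\}$, with $\overline{E}$ joined to the two upper endpoint vertices of the ladder and $\underline{E}$ joined to the two lower endpoints. Pick an optimal ladder distribution $p$ as exhibited in Fig.~\ref{felso}, and define $\tilde p$ on the prism by $\tilde p(v)=p(v)$ for every ladder vertex and $\tilde p(\overline{E})=\tilde p(\underline{E})=0$. Then $|\tilde p|=|p|$ equals the claimed right-hand side, so it suffices to prove that $\tilde p$ is solvable on $C_n\square P_2$.

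Since $P_{n-1}\square P_2$ sits inside $C_n\square P_2$ as a subgraph, every ladder vertex is still reachable from $\tilde p$ by the rubbling sequence witnessing its reachability under $p$. The only remaining task is to reach $\overline{E}$ and $\underline{E}$. The natural route is to produce pebbles simultaneously at the two upper endpoint vertices of the ladder and apply a strict rubbling move targeting $\overline{E}$, and likewise for $\underline{E}$; equivalently, two pebbles may be amassed at a single endpoint and moved to $E$ by a pebbling move.

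The main obstacle is verifying this \emph{independent simultaneous} reachability of the endpoint vertices, since the same pebble of $p$ cannot be spent twice. I would handle this case by case for $r\in\{0,1,2\}$, exploiting the periodic structure of the distributions from Fig.~\ref{felso} and, where convenient, the cyclic symmetry of $C_n\square P_2$ (which lets one rotate the ladder distribution so that its surplus pebbles sit near the new rung $E$). Once this short case analysis is completed, the three bounds $\varrho_{\opt}(C_{3k-1}\square P_2)\le 2k$, $\varrho_{\opt}(C_{3k}\square P_2)\le 2k$ and $\varrho_{\opt}(C_{3k+1}\square P_2)\le 2k+1$ follow, completing the proposition.
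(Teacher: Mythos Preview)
Your proposal is correct and follows essentially the same approach as the paper: both view $C_n\square P_2$ as a ladder $P_{n-1}\square P_2$ with one extra rung, transfer the optimal Fig.~\ref{felso} distribution unchanged, and then argue that the two endpoints of the ladder are independently (``in parallel'') reachable so that the new rung can be reached by a strict rubbling move. The paper likewise defers the verification of this independent reachability to inspection of the explicit Fig.~\ref{felso} patterns, so your level of detail matches the paper's.
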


\begin{proof}
Notice that if  an arbitrary rung $A$ of $C_{n}\square P_2$ is deleted then we obtain $P_{n-1}\square P_2$.
It is easy to see that if $k\geq 2$ then opposite ends of the ladder are reachable ``in parallel'' from the distributions shown in Fig.~\ref{felso} such that every pebble contributes to only one end. Since $A$ is adjacent to both ends of the ladder, both vertex of $A$ can be reached, too. In one case this idea works even if we delete two adjacent rungs from the prism. So the optimal distributions of $P_{n-1}\square P_2$ or $P_{n-2}\square P_2$ gives a solvable distributions of the circle in the following way:
\begin{itemize}
\item If $n\equiv 0 \mbox{ mod } 6$ then use the distribution of $P_{3k+2}\square P_2$.
\item If $n\equiv 3 \mbox{ mod } 6$ then use the distribution of $P_{3k+1}\square P_2$.
\item If $n\equiv 1 \mbox{ mod } 3$ then use the distribution of $P_{3k}\square P_2$.
\item If $n\equiv 2 \mbox{ mod } 3$ then use the distribution of $P_{3k+1}\square P_2$.
\end{itemize}
\end{proof}

For the optimal distributions when $k=1$ see Fig. \ref{C_nxP_2}\protect\footnotemark.

\footnotetext{This result is going to appear in Discrete Applied Mathematics. Unfortunately, there is an error in that version. It states that the optimal distribution of the cube ($C_3\square P_2$) contains $4$ pebbles, but the right value is $3$. This error slightly changes the statement of Theorem \ref{Mobiustetel}.}

\begin{figure}[htb]
\centering
\includegraphics[scale=0.6]{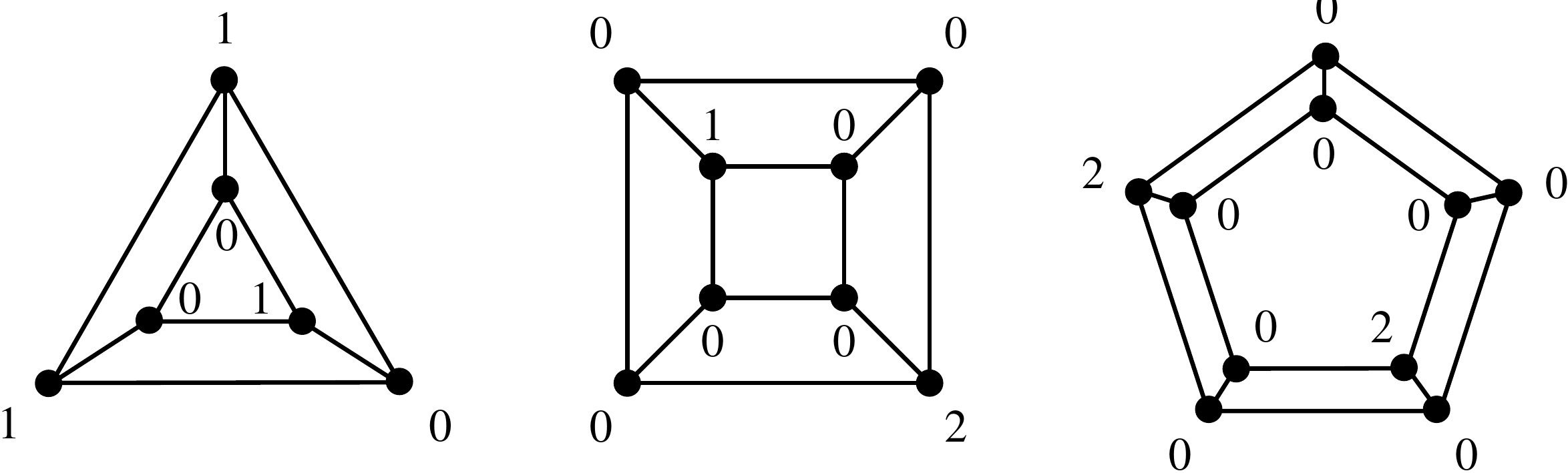}
\caption{Optimal distributions of the n-prism when $n\leq5$.}
\label{C_nxP_2}
\end{figure}

To prove the lower bound we show the following: If $p$ is an optimal distribution of $C_n\square P_2$, then we can delete one or two adjacent rungs and slightly modify the distribution such that the obtained distribution is solvable. Thus we get a lower bound by \ref{letratetel}. We will show this using the assumption that one of the deleted rungs is not 2-reachable. Therefore we start with Lemma \ref{nem2elerheto}, which guarantees that such a rung is exists.    

\begin{lemma}
Let $p$ be an optimal distribution of $C_n\square P_2$ ($n\geq 5$). Then there exists a rung which is not 2-reachable.
\label{nem2elerheto}
\end{lemma}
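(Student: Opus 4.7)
The plan is to argue by contradiction using a projection onto the base cycle $C_n$ together with Theorem~\ref{kor2tetel}.

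Define the natural projection $\pi\colon V(C_n\square P_2)\to V(C_n)$ that identifies each rung with its corresponding cycle vertex; writing $\overline{v},\underline{v}$ for the two endpoints of rung $v$, put $\pi(p)(v):=p(\overline{v})+p(\underline{v})$. Then $|\pi(p)|=|p|$. The key step is the following claim: if rung $v$ is $2$-reachable under $p$ on $C_n\square P_2$, then vertex $v$ is $2$-reachable under $\pi(p)$ on $C_n$.

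To prove the claim, take an executable sequence $T=(t_1,\dots,t_k)$ in $C_n\square P_2$ with $p_T(\overline{v})+p_T(\underline{v})\ge 2$ and build a sequence $T'$ on $C_n$ by processing $T$ step by step. Any move whose sources and target all lie in the top copy of $C_n$, or all lie in the bottom copy, is translated directly to the analogous move on $C_n$; every move that uses a rung edge (a rung pebbling, or a strict rubbling with one source from the top and one from the bottom) is simply discarded. A short induction on $i$ establishes the invariant
\[
Q_i(u)\ \ge\ p_{(t_1,\dots,t_i)}(\overline{u})+p_{(t_1,\dots,t_i)}(\underline{u})\qquad\text{for every }u\in V(C_n),
\]
where $Q_i$ denotes the distribution on $C_n$ after processing the first $i$ moves of $T$. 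A translated move changes both sides by the same amount and preserves the inequality; a discarded rung-using move leaves $Q_i$ unchanged, but a direct case check shows it decreases the right-hand side by exactly $1$ at a single vertex of $C_n$ and leaves the others fixed, so the invariant still holds. Executability of $T'$ in $C_n$ is preserved as well, because any translated pebbling step with source $a$ needs two pebbles at $\overline{a}$ or $\underline{a}$ in $C_n\square P_2$, which by the invariant is dominated by $Q_{i-1}(a)$. Taking $i=k$ yields $Q_k(v)\ge 2$.

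Now assume, for contradiction, that every rung is $2$-reachable under $p$. By the key step every vertex of $C_n$ is $2$-reachable under $\pi(p)$, so $\pi(p)$ is $2$-solvable and Theorem~\ref{kor2tetel} forces $|p|=|\pi(p)|\ge n$. On the other hand, $p$ is optimal, so by Proposition~\ref{all} we have $|p|\le 2k$ when $n\in\{3k-1,3k\}$ and $|p|\le 2k+1$ when $n=3k+1$; since $n\ge 5$ we have $k\ge 2$ in each case, and then $2k<3k-1$, $2k<3k$ and $2k+1<3k+1$ all hold. Hence $|p|<n$, contradicting $|p|\ge n$, and so some rung must fail to be $2$-reachable.

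The main obstacle is the projection lemma: one has to verify that every move type in $C_n\square P_2$ interacts correctly with the stated invariant, and in particular that simply discarding every rung-using move is harmless. This ultimately reduces to the observation that every such move strictly decreases the projected pebble mass at some vertex of $C_n$; once this is in place, the contradiction is a short size comparison via Theorem~\ref{kor2tetel} and Proposition~\ref{all}.
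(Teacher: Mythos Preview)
Your proof is correct and follows essentially the same route as the paper: your projection $\pi$ is precisely the paper's ``collapsing'' of each rung to a single vertex, and both arguments finish by combining Theorem~\ref{kor2tetel} with the upper bounds of Proposition~\ref{all} to contradict $|p|\ge n$. The only difference is that you spell out the inductive invariant justifying why $2$-reachability of a rung implies $2$-reachability of the collapsed vertex, whereas the paper simply asserts this step as easy; your executability argument should also mention translated strict rubbling moves (not only pebbling moves), but the same invariant handles them immediately.
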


We are going to use the \emph{collapsing technique} of \cite{ladder}, which is the following method. Let $S$ be a subset of $V(G)$. Then the collapsing  $S$ creates a new graph $H$. The vertex set of $H$ is $\{u\}\cup V(G)\setminus S$,  where $u$ is a vertex which plays the role of $S$. More precisely, $u$ is connected with a vertex $v$ if $v$ and a vertex of $S$ are adjacent in $G$. Moreover, the subgraph induced by $V(G)\backslash S$ is the same in both graphs. Let $p$ be a pebbling distribution on $G$. Then we also define a \textit{collapsed} distribution $q$ on the collapsed graph $H$. The definition is simple: $q(u)=p(S)$ and $q(v)=p(v)$ for all $v\notin S$.      

\begin{proof}
Let $q$ be a pebble distribution of $C_n$ obtained from a solvable pebble distribution of $C_n\square P_2$ by applying collapsing operations for each rungs independently. It is easy to see, that the 2-reachability of a rung implies that the vertex produced by its collapse is 2-reachable from $q$. Assume that each rung is 2-reachable from $p$. Then each vertex is 2-reachable from $q$ and Lemma \ref{kor2tetel} implies that $q(C_n)\geq n$.  However, Proposition \ref{all} implies that $p(C_n\square P_2)<n$ and the definition of the collapsed distribution shows us that $q(C_n)=p(C_n\square P_2)$, which is a contradiction.
\end{proof}

We stated that the optimal distributions of $P_n\square P_2$ which we inspected has the property that the opposite ends of the ladder are reachable in ``in parallel''. Unfortunately, it is not true for all solvable distributions. To handle this, we invent a new definition which catches an even stronger property.

\begin{definition}
Let $p$ be a distribution of $P_n\square P_2$. Let $l$ and $r$ be different vertices, such that $l$ is located left from $r$. If $l$ is right $k_l$-reachable and $r$ is left $k_r$-reachable, but not independently, then we say that $l$ and $r$ are $p$-dependent.
\end{definition}

\begin{lemma}
\label{nemfugg}
 Let $p$ be a distribution of $P_n\square P_2$. Let $l$ and $r$ be two different vertices, such that they belong to different rungs. If $l$ and $r$ are $p$-dependent, then all vertices located between their rungs are reachable.
\end{lemma}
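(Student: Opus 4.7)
The plan is to argue by contradiction: assume some vertex $v$ on a rung $C$ strictly between $A$ (the rung of $l$) and $B$ (the rung of $r$) is not reachable from $p$, and derive that $l$ and $r$ must be independently reachable, contradicting $p$-dependence.

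The first step is to extract strong local constraints from the unreachability of $v$. Let $v'$ denote the rung-mate of $v$, and let $v_L, v_R$ denote $v$'s same-row neighbors on the rungs immediately left and right of $C$ (both exist since $C$ is strictly between $A$ and $B$). Since no executable sequence from $p$ ever places a pebble on $v$, at every instant during any executable sequence we must avoid both (a) any single neighbor of $v$ holding $\ge 2$ pebbles (which would allow a pebbling move to $v$) and (b) any two distinct neighbors of $v$ each holding $\ge 1$ pebble (which would allow a strict rubbling move to $v$). Equivalently, $p(v') + p(v_L) + p(v_R) \le 1$ throughout any executable sequence, and in particular $p(v) = 0$ initially and no pebble ever lands on $v$ or on $v$'s row at $C$.

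The second step is to analyze the witnesses $S_l$ (right-reaching $l$) and $S_r$ (left-reaching $r$) under this constraint. Because the row of $v$ is effectively blocked at $C$, every pebble that $S_l$ transports from the right of $C$ toward $A$ must cross $C$ through $v'$, and likewise $S_r$ must cross through $v'$ in the opposite direction. Moreover, since $v'$ can hold at most one pebble at any moment, each crossing consists of a one-step deposit at $v'$ followed promptly by a one-step withdrawal toward the opposite side, and the moves of $S_l$ and $S_r$ away from $C$ pull pebbles from disjoint halves of the graph.

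The third step is to interleave $S_l$ and $S_r$ into a single executable sequence that reaches $l$ and $r$ simultaneously. Opposite-direction crossings through $v'$ admit a pairing: whenever $S_l$ is about to push a pebble onto $v'$ from the right so as to later push it leftward, we can instead take a pebble arriving at $v'$ from the left side (produced by $S_r$'s crossing) and use it toward $l$, while the pebble $S_l$ was bringing proceeds toward $r$. Moves of $S_l$ and $S_r$ acting on disjoint vertices commute freely, so by induction on the number of crossings through $v'$ the two sequences merge into one executable sequence producing a pebble at both $l$ and $r$, contradicting $p$-dependence.

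The main obstacle is the third step: rigorously pairing the opposite-direction crossings at $v'$ and verifying that the merged sequence remains nonnegative at every partial state, subject both to the one-pebble-buffer behaviour of $v'$ and to the global constraint $p(v') + p(v_L) + p(v_R) \le 1$. This will likely require a careful induction on the number of crossings, exploiting the commutativity of moves on disjoint vertices and the fact that each side of $C$ supplies its own pebble budget to its own crossings.
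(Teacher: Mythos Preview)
Your approach is genuinely different from the paper's, and the difference is instructive. The paper argues \emph{directly}: from $p$-dependence it extracts minimal witnesses $T_l,T_r$ and observes that they must share a vertex (otherwise $T_lT_r$ would witness independent reachability). For any rung $V$ strictly between the rungs of $l$ and $r$, minimality together with the ladder structure then forces at least one of $T_l,T_r$ to act on $V$; the paper finishes by a one-move \emph{redirection}: whatever move of (say) $T_r$ first removes a pebble from a vertex of $V$ can be rerouted so that its destination is $v$. Your plan instead proceeds by contradiction, trying to manufacture independent reachability from the unreachability of $v$.

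The gap is in your third step, and it is not merely a matter of missing rigor. Your pairing scheme presumes that every crossing of $S_l$ through $v'$ (from right to left) can be matched with a crossing of $S_r$ (from left to right). But nothing forces the two sequences to have the same number of crossings, or any crossings at all on one side. For instance, $S_r$ may use only pebbles already lying between $C$ and $r$'s rung and never touch $C$, while $S_l$ may cross $C$ several times; then there is nothing to pair, yet the two sequences can still conflict on pebbles lying strictly between $l$'s rung and $C$. Your claim that ``moves of $S_l$ and $S_r$ acting on disjoint vertices commute freely'' is true, but does not help, because the conflict lies precisely on the shared vertices between $l$'s rung and $r$'s rung, not just at $v'$. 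The one-pebble buffer at $v'$ constrains how traffic passes the rung $C$, but it does not by itself separate the pebble budgets that $S_l$ and $S_r$ draw on elsewhere.

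What makes the paper's argument short is that it never tries to run $T_l$ and $T_r$ simultaneously. It only needs that \emph{one} of them touches the rung $V$, and then modifies a single move of that one sequence to land on $v$. If you want to salvage a contradiction argument, the clean route is the contrapositive of exactly this: if neither minimal witness acts on $V$, then (by minimality and the fact that the ladder disconnects at $V$) each witness lives entirely on its own side of $V$, hence they are vertex-disjoint and can be concatenated to witness independence. That observation already gives you that one of $T_l,T_r$ acts on $V$, and then the redirection step finishes; the crossing-pairing machinery is unnecessary.
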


\begin{proof}
Without loss of generality we may assume that $l$ is located left from $r$.
The condition implies the following facts: 
\begin{itemize}
\item There is a rubbling sequence $T_r$ acting only on vertices not located to the right from $R$ (the rung containing $r$), such $p_{T_r}(r)=k_r$. 
\item If we consider a proper subsequence of $T'_r$ (i. e. we delete at least one move from $T_r$) then $p_{T'_r}(r)< k_r$ or $T'_{r}$ is not executable. 
\item We also have a $T_l$ with the same properties for $l$. 
\item There is a vertex $u$, such $T_r$ and $T_l$ both acts on this vertex.
\end{itemize}

\par 
Let $v$ be a vertex between the rungs of $l$ and $r$. We show that $v$ is reachable. Let $V$ be the rung which contains $v$. It is easy to see, that at least one of $T_r$ and $T_l$ is acting on $V$. Assume that it is $T_r$. If $T_r$ acts on $v$ then it has to receive a pebble at some point, so it is reachable. Otherwise, the other vertex of $V$ (denote it with $v'$) is reachable. $T_r$ moves the pebble of $v'$ towards $r$, there are two possibilities. First, when  $T_r$ uses a pebbling move to move this pebble from $v'$. In this case, we can change its destination vertex to $v$ and so it is reachable. The second case is when $T_r$ is using a strict rubbling move to move the pebble of $v'$ towards a neighbour. The structure of the ladder implies that this strict rubbling move removes the other pebble from an other neighbour of $v$. So we can reach $v$ by this strict rubbling move if we change its destination to $v$.
\end{proof}

We prove two lower bounds on $\varrho_{\opt}(C_n\square P_2)$ in the next lemma. One of them is stronger than the other. However, the stronger one is not always true, but we give a sufficient condition for it. 

\begin{lemma}
\label{korlatok}
Let $n\geq 5$ and $p$ be an optimal distribution of $C_n\square P_2$, and let $C$ be a rung of this graph such that it is not 2-reachable under $p$. Consider the following properties:
\begin{itemize}
\item $p(C)=0$
\item $L_p(\overline{L})=0$
\item $L_p(\underline{L})=1$
\item $R_p(\overline{R}\geq 2)$ or $R_p(\underline{R}\geq 2)$
\end{itemize}  
Let $\mbox{ref}_c()$ be a reflecting operator, which reflects the whole $p$ distribution across rung $C$. Furthermore, let $\mbox{ref}_h()$ be a similar operator which reflects the whole $p$ distribution horizontally.   
If none of the $\{p$,
$\mbox{ref}_c(p)$,
$\mbox{ref}_v(p)$, $\mbox{ref}_c(\mbox{ref}_v(p))\}$ distributions fulfills all of the properties given above, then $|p|\geq \varrho(P_{n-1}\square P_2)$. Otherwise $|p|\geq \varrho(P_{n-2}\square P_2)$.  
\end{lemma}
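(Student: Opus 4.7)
The plan is, starting from $p$ on $C_n\square P_2$, to produce a solvable distribution on a shorter ladder whose size equals $|p|$. In the generic case the target ladder is $G' = P_{n-1}\square P_2$, obtained by deleting the not-2-reachable rung $C$; in the exceptional case (one of the four reflections satisfies all four listed properties) we instead delete $C$ together with the rung $L$, giving $P_{n-2}\square P_2$. Since the total number of pebbles is preserved by the redistribution, solvability of the resulting ladder distribution immediately yields the stated lower bound on $|p|$ via Theorem~\ref{letratetel}.

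First I would reduce to a ladder. Delete $C$ to obtain $G'= P_{n-1}\square P_2$ with left end $L$ and right end $R$ (the rungs formerly adjacent to $C$ in the cycle). Define $p'(v)=p(v)$ for $v\in V(G')$ and move the at most one pebble ever present at $C$ onto one of the four endpoint vertices $\overline{L},\underline{L},\overline{R},\underline{R}$, chosen to maximize the margin in the weight inequalities. Because $C$ is not 2-reachable under $p$, at most one pebble can ever pass through $C$ in any executable rubbling sequence on the cycle, so the ``bridging'' role of $C$ in reaching sequences is captured by the single redistributed pebble on an endpoint.

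Next I would verify solvability of $p'$. For each vertex $v\in V(G')$ choose flanking rungs $A$ and $B$ as in Corollary~\ref{p-kov} and check the four weight inequalities. The comparison reduces to a direct calculation in the values $p(\overline{L}),p(\underline{L}),p(\overline{R}),p(\underline{R})$ and $p(C)$, in the style of the computations of Subsection~\ref{minnegy}. Outside the exceptional regime this calculation succeeds and Corollary~\ref{p-kov} yields reachability of every interior vertex of $G'$; the four endpoint vertices are reachable directly from the pebbles inherited ``from the $C$ side'' of the cycle, using that $C$ itself is at most $1$-reachable.

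The conditions $p(C)=0$, $L_p(\overline{L})=0$ and $L_p(\underline{L})=1$ together force $p(\overline{L})=0$, $p(\underline{L})\le 1$, and severely limit the total pebble mass on the left portion of the cycle up to and including $L$; the fourth condition says the right side is simultaneously strong enough to be right-$2$-reachable at $R$. Under these hypotheses the weight inequality at $\overline{L}$ fails and there is no pebble on $C$ to compensate, so the one-rung deletion is insufficient. In this case I would instead delete both $C$ and $L$, obtaining $P_{n-2}\square P_2$; push the single pebble on $\underline{L}$ onto the new left endpoint $L'$ (the rung that used to lie just left of $L$ in the cycle), and leave the rest of $p$ unchanged. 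The inequalities now close at $L'$ because $L'$ is a full edge closer to the pebble-rich right side, while $R$ is left intact, and Corollary~\ref{p-kov} again delivers reachability everywhere. The main obstacle is the case analysis underlying the generic step: enumerating all small-pebble configurations on $C\cup L\cup R$ under which the one-rung deletion fails. The four listed properties are, by design, exactly the failure modes, up to the symmetries generated by $\mbox{ref}_c$ and $\mbox{ref}_v$; every other configuration can be absorbed by a suitable choice of where to place the pebble from $C$ among $\overline{L},\underline{L},\overline{R},\underline{R}$.
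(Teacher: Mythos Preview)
Your plan has two genuine gaps.

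First, Corollary~\ref{p-kov} is not the right tool here. That corollary compares a ladder distribution $p$ with a reduced distribution $p^R$ on a \emph{shorter ladder} obtained by cutting out an \emph{interior} $P_3\square P_2$ block and re-gluing; the four inequalities compare right/left reachabilities at the same vertices $\overline{A},\underline{A},\overline{B},\underline{B}$ computed in $G$ versus in $G^R$. In your setting $G$ is the prism and $G'$ is a ladder obtained by deleting a single end rung; there is no pair of rungs $A,B$ flanking a deleted block, and the notion of $R_p(\overline{A})$ in $G$ makes no sense (there is no ``right'' on the cycle). So the weight-inequality machinery of Subsection~\ref{minnegy} simply does not apply, and the sentence ``the comparison reduces to a direct calculation'' is doing no work.

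Second, the paper's proof is not a weight computation at all; it is a structural case analysis that your sketch bypasses. The paper sometimes deletes $C$, sometimes the adjacent rung $L$, and in the exceptional case both. When $L$ is deleted, the pebbles of $L$ are redistributed to the rung $L_2$ with a \emph{cross-swap} ($\underline{L}\to\overline{L_2}$, $\overline{L}\to\underline{L_2}$), and the justification that this preserves reachability uses the fact that only a strict rubbling move can carry a pebble from $C$ to $L$ (since $C$ is not $2$-reachable). The remaining cases are handled via Lemma~\ref{nemfugg}: if, after cutting the $L$--$C$ edges, some vertex of $L$ and some vertex of $R$ are $p$-dependent, then every vertex between them is automatically reachable; otherwise one argues directly that no pebble needs to cross $C$. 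Your proposal does not invoke $p$-dependence at all, nor does it explain why ``at most one pebble passes through $C$'' suffices to keep every vertex of $G'$ reachable---that is precisely where the paper's case split (Case~1 vs.\ Case~2, and the subcases on $p(C)$, $L_p(\overline{L})$, $R_p(\overline{R})$, etc.) does the real work and isolates the exceptional configuration.
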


\begin{proof} We construct
 solvable pebbling distributions for the $P_{n-1}\square P_2$ and $P_{n-2}\square P_2$ ladders from the optimal distribution of $C_{n}\square P_2$.
Consider the solvable distribution $p$ on $C_{n}\square P_2$ and the rung $C$ which is not 2-reachable.
It is trivial that each of the distributions  $\{\mbox{ref}_c(p)$,
$\mbox{ref}_v(p)$, $\mbox{ref}_c(\mbox{ref}_v(p))\}$ are solvable. If we delete the rung  $L$ or rung $C$ from $C_{n}\square P_2$, then the remaining graph is $P_{n-1}\square P_2$. If we delete both  $L$ and $C$ then the remaining graph is $P_{n-2}\square P_2$. We show that a slight modification of the induced distribution is a solvable distribution of these graphs.

This modification is the following: Let $L_2$ be the left neighbour of $L$. If we delete $L$, then we place all  pebbles of $\underline{L}$ to $\overline{L_2}$, and place all the pebbles of $\overline{L}$ to $\underline{L_2}$. 

\par $C$ is not 2-reachable, thus  no pebbling move can move a pebble from $C$ to $L$. 
Thus only a strict rubbling move can move a pebble from $C$ to $L$.
The modification does not increase the distance of the pebbles from the vertices located left from $L$.
Furthermore, the strict rubbling move from $C$ is not needed, because the result of this move is the same as if we swap the pebbles of the vertices of $L$, but in the modified distribution these pebbles are placed closer to the remaining vertices. Hence if $T$ is an executable rubbling sequence  acting on $L_2,L$ and $C$ then we can construct $T'$,  such it is also executable, $p_T'(\overline{L_2})=p_T(\overline{L_2}) $ and $p_T'(\underline{L_2})=p_T(\underline{L_2}) $. The construction is easy: delete the strict rubbling move which uses a pebble from $C$, replace each occurrence of $\overline{L}$ with $\underline{L_2}$, and do the same with the $\underline{L}$, $\overline{L_2}$ pair. 

\par Using this we can modify every executable rubbling sequence which acts on $L$.
     
\par  Notice that if we delete the edges between $L$ and $C$, and  we have a vertex $l$ of $L$ and an $r$ of $R$ such they are $p$-dependent
, then by Lemma \ref{nemfugg} all vertices between $L$ and $R$ are still reachable.

\subsubsection*{Case 1:}
First we handle the case when there is a rubbling sequence from the optimal distribution to each vertex of $C$ that non of them use either the two edges between $L$, $C$ or the two edges between $C$, $R$. We can assume that we have the first case, otherwise apply $\mbox{ref}_c()$. Now we delete $L$ and show that the remaining part is solvable under a modified version of $p$. 

If we delete the edges between $L$ and $C$ and a vertex of $L$ and a vertex of $R$ is $p$-dependent, then all vertices between $L$ and $R$ are reachable. Furthermore $C$ is reachable our assumption, hence $R$ is also. Therefore, the modified distribution  is  a solvable distribution in the graph obtained by deleting these edges, a $P_{n-1}\square P_2$.   Thus we may assume that the vertices of $L$ and $R$ are not $p$-dependent.

First we show, that there is no need to move a pebble from $L$ across an edge between $L$ and $C$ to reach a vertex located right from $C$. A vertex of $L$ cannot get two pebbles from the left, because it would violate the condition that $C$ is not 
2-reachable ($C$ remains reachable from $R$ since $l\in V(L)$ and $r\in V(R)$ are not $p$-dependent ). So there is only a strict rubbling move available which requires a pebble at $C$.

\par If $C$ does not have a pebble then first we need to move one there. This requires two pebbles at $R$ and after two other moves  we are able to move it back to $R$ with the help of a pebble of $L$. Both vertices of $R$ were reachable, we consumed two pebbles of $R$ and moved one to it, so we just wasted the pebbles. Hence it is pointless to move a pebble from $L$ to $R$.

\par  If $C$ has a pebble then we may assume that $\overline{C}$ has it. We can apply $\mbox{ref}_h()$ to achieve this. Now we can use only the pebble of $\underline{L}$. Our assumption that both vertices of $C$ are reachable without the use of the edges between $L$ and $C$ implies that $\underline{R}$ is right-reachable. Hence \underline{L} can not be left-reachable. 

\par This completes the proof of this case. 

\subsubsection*{Case 2:}
Now we assume the opposite, that there are moves through the edges between $L$, $C$ and $R$ to reach the vertices of $C$.

\par When $C$ has a pebble, then assume that it is placed on $\overline{C}$. The reachability of $\underline{C}$ implies that $\underline{R}$ is right reachable or $\underline{L}$ is left reachable. However these are the cases we covered in Case 1. Hence $C$ can not contain a pebble. 

\par If there is a left 2-reachable $l$ of $L$ and a right 2-reachable $r$ of $R$ then we have two possibilities. The first is that they are $p$-dependent. This means that if we delete $C$ then $p$ remains solvable on the graph because of Lemma \ref{nemfugg}. The second case, when they are not $p$-dependent, contradicts with the fact that $C$ is not 2-reachable.

\par If both vertices of $L$ are left reachable and both vertices of $R$ are right reachable but none of them are left or right 2-reachable, then it is easy to see that we can not move a pebble from $L$ to $R$ through $C$ and the same is true for the opposite direction. So in this case  and everything remains reachable after the deletion of $C$.  

\par It is possible that one of them can get more pebbles, so assume that $\overline{R}$ is right 2-reachable. In this case $\underline{R}$ and $\overline{R}$ are not reachable independently. The same is true for $\overline{L}$ and $\underline{L}$. Hence after we have used the two pebbles of $\overline{R}$ and have moved it to $\overline{C}$, we can not use the pebble at $\overline{C}$ by a strict rubbling move to increase the number of pebbles at $\overline{L}$ to 2.  
So we can delete $C$ without any problem and the distribution remains solvable. 

\par Now we have checked almost all the cases except the ones which gives bound with $P_{n-2}\square P_2$. In this case at most 3 of the vertices of $L$ and $R$ are reachable from the proper direction. Otherwise, we can not reach both vertices of $C$ or we have a case which we have already covered above. 

Applying $\mbox{ref}_c()$ and $\mbox{ref}_h()$ we can guarantee that $\overline{L}$ is not left-reachable.
Then $\underline{L}$ is left reachable, and both vertices of $R$ are right reachable. The fact that $\overline{L}$ is not left reachable implies that we can move a pebble from $R$ to $\overline{C}$ or to $\underline{L}$. $C$ does not have a pebble so $R_p(R)\geq 2$. Now if we delete $L$, $C$ and modify the distribution in the above way then we obtain a solvable distribution of $P_{n-2}\square P_2$ which gives us the desired bound.

It remains to show that  $R_p(\overline{R})=R_p(\underline{R})=1$ leads to a contradiction. 
The only possible way to move a pebble with the use of $R$'s pebbles to the neighbourhood of $\overline{L}$ is to consume a pebble from $\underline{L}$. On the other hand,  $\underline{L}$ is a neighbour of $\overline{L}$, so we are not able to increase the number of pebbles at $\overline{L}$'s neighbourhood. So $\overline{L}$ is not reachable under $p$ in the $n$-prism, which contradicts that $p$ is an optimal distribution of that graph. 
\end{proof}

Finally we prove the exact values. We use the previous two lower bounds. In the second and third cases we make some tricky constructions to show that the sufficient condition of the stronger bound holds.

\begin{theorem} The optimal rubbling numbers for the $n$-prisms are:
$$\varrho_{\opt}(C_{3k-1} \square P_2)=\varrho_{\opt}(P_{3k-2}\square P_2)=2k,$$
$$\varrho_{\opt}(C_{3k} \square P_2)=\varrho_{\opt}(P_{3k-1} \square P_2)=2k,$$
$$\varrho_{\opt}(C_{3k+1} \square P_2)=\varrho_{\opt}(P_{3k} \square P_2)=2k+1,$$

\begin{center}
except $\varrho_{\opt}(C_{3} \square P_2)=3.$
\end{center}

\label{kortetel}
\end{theorem}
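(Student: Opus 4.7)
The upper bounds come directly from Proposition~\ref{all} combined with Theorem~\ref{letratetel} for $k \geq 2$, and from Fig.~\ref{C_nxP_2} for the three small values $n \in \{3,4,5\}$, so the real work is the matching lower bound. My plan is to leverage the two lemmas already proved.

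Fix an optimal distribution $p$ on $C_n \square P_2$ with $n \geq 5$. By Lemma~\ref{nem2elerheto} some rung $C$ is not $2$-reachable under $p$, and Lemma~\ref{korlatok} applied to $C$ gives either the strong inequality $|p| \geq \varrho_{\opt}(P_{n-1} \square P_2)$ or the weak inequality $|p| \geq \varrho_{\opt}(P_{n-2} \square P_2)$. Plugging Theorem~\ref{letratetel} into each branch produces three scenarios:
\begin{itemize}
\item $n = 3k$: both branches evaluate to $2k$, matching the target, so Lemma~\ref{korlatok} alone finishes the case.
\item $n = 3k - 1$: the target $2k$ is attained by the strong branch, but the weak branch gives only $2k - 1$.
\item $n = 3k + 1$: the target $2k + 1$ is attained by the strong branch, but the weak branch gives only $2k$.
\end{itemize}
Therefore the heart of the proof is to certify the strong branch in the residues $n \equiv 1, 2 \pmod{3}$.

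This is where I expect the main obstacle. The strong branch fails only when some reflection of $p$ simultaneously satisfies the four properties of Lemma~\ref{korlatok}: $p(C) = 0$, $L_p(\overline{L}) = 0$, $L_p(\underline{L}) = 1$, and $R_p(\overline{R}) \geq 2$ or $R_p(\underline{R}) \geq 2$. These force a very rigid local picture near $C$: an empty rung with the entire left weight concentrated on a single vertex of $L$ and substantial right weight at $R$. My plan is to dispose of such configurations by the ``tricky constructions'' alluded to in the paper. Concretely, I would first try to locate a \emph{different} non-$2$-reachable rung $C'$ for which no reflection triggers the failure mode and rerun Lemma~\ref{korlatok}; failing that, I would construct by hand a solvable distribution $p^{\ast}$ of size $|p|$ on $P_{n-1} \square P_2$, obtained by deleting only $C$ and relocating the surplus right-side pebbles across $C$ into $L$ along the lines of Case~2 of Lemma~\ref{korlatok}, exploiting the rigidity to argue that no strict rubbling move actually crossing $C$ was essential. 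This finite but delicate case analysis is where the real work lies.

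Finally, the small cases $n \in \{3, 4, 5\}$ are handled by direct inspection: the upper bounds come from Fig.~\ref{C_nxP_2}, and the lower bounds follow from short case analyses of all distributions below the claimed threshold, using Lemma~\ref{weight-lemma} to rule them out. In particular, for the exception $\varrho_{\opt}(C_3 \square P_2) = 3$, any two-pebble distribution on the six-vertex prism will be shown to leave some vertex unreachable by enumerating the few non-isomorphic placements of two pebbles.
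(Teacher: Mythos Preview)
Your overall scaffolding is right: the upper bounds come from Proposition~\ref{all} and Theorem~\ref{letratetel}, the case $n\equiv 0\pmod 3$ is settled by the weak branch of Lemma~\ref{korlatok} alone, and the remaining two residue classes require ruling out the weak branch. You also correctly identify the rigid local picture forced by the four failure conditions.

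The gap is in how you propose to dispose of that failure picture. Neither of your two plans---hunting for a second non-$2$-reachable rung $C'$, or directly manufacturing a solvable distribution of size $|p|$ on $P_{n-1}\square P_2$---is what the paper does, and you have not shown either one actually goes through. The paper's trick is different and more indirect: it argues by contradiction, assuming the weak bound is \emph{tight}, i.e.\ $|p|=\varrho_{\opt}(P_{n-2}\square P_2)$. Then, instead of deleting a rung, it merely cuts the two edges between $C$ and $R$, obtaining a copy of $P_n\square P_2$ with all $n$ rungs intact. The forced local data ($p(C)=0$, $L_p(\overline L)=0$, $L_p(\underline L)=1$, and two pebbles available at $R$) let one place a single extra pebble at $\overline C$ and obtain a solvable distribution of size $|p|+1$ on $P_n\square P_2$. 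For $n=3k+1$ this yields $2k+2=\varrho_{\opt}(P_{3k+1}\square P_2)\le |p|+1=2k+1$, a contradiction. For $n\equiv 2\pmod 3$ the same cut is made, but now the ladder is \emph{augmented} by two further rungs on the $R$-side (carrying one extra pebble) together with one extra pebble at $\overline C$, producing a solvable distribution of size $|p|+2$ on $P_{n+2}\square P_2$; comparing with Theorem~\ref{letratetel} again yields a numerical contradiction.

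So the missing idea is: do not try to realise the strong inequality $|p|\ge\varrho_{\opt}(P_{n-1}\square P_2)$ directly, but rather feed the assumed equality with the weak bound back into the ladder theorem on a \emph{longer} ladder (length $n$ or $n+2$), where the gain of one or two rungs outpaces the one or two pebbles you have to add.
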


\begin{proof}
Let $p$ be an optimal pebbling distribution of $C_n\square P_2$, and let $C$ be a rung which is not 2-reachable under $p$. Denote the neighbouring rungs of $C$ with $L$ and $R$.

\begin{figure}[ht]
\centering
\scalebox{0.6}{\input{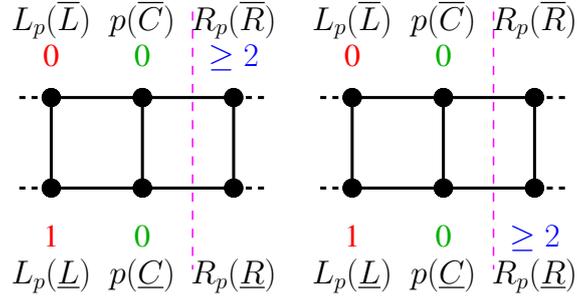}}
\caption{Each number is the value of the expression written next to it.}
\label{letra2}
\end{figure}

Let $n=3k+r$ where $n,k,r \in \mathbb{N}, 0\le r <3$. Now we show for each $r$ that the theorem holds.
We apply Lemma \ref{korlatok} in each case.

\par When $r=0$ then we use the general bound of Lemma $\ref{korlatok}$, which always holds: 
$$\varrho_{\opt}(C_n \square P_2)\geq \varrho_{\opt}(P_{n-2} \square P_2)=2(k-1)+2=2k.$$

\par When $r=1$ we show that in Lemma \ref{korlatok} the better bound holds, since at least one of the conditions fail to hold. Indirectly, assume that $\varrho_{\opt}(C_n \square P_2)=\varrho_{\opt}(P_{n-2}\square P_2)$ . This implies that all conditions in the Lemma hold, so we have one of the cases shown on Fig. \ref{letra2}. Delete the edges between $R$ and $C$ to obtain $P_n\square P_2$, place one more pebble at $\overline{C}$.  This modification of $p$ is clearly solvable on $P_n\square P_2$. This implies:
\begin{align*}
2k+2&=\varrho_{\opt}(P_n\square P_2)\leq \varrho_{\opt}(C_n \square P_2)+1 =\\
&=\varrho_{\opt}(P_{n-2}\square P_2)+1=2(k-1)+2+1=2k+1,
\end{align*}
which is a contradiction. Hence: $$\varrho_{\opt}(C_n \square P_2)\geq \varrho_{\opt}(P_{n-1} \square P_2)=2k+1.$$ 

\begin{figure}[htb]
\centering
\scalebox{0.6}{\input{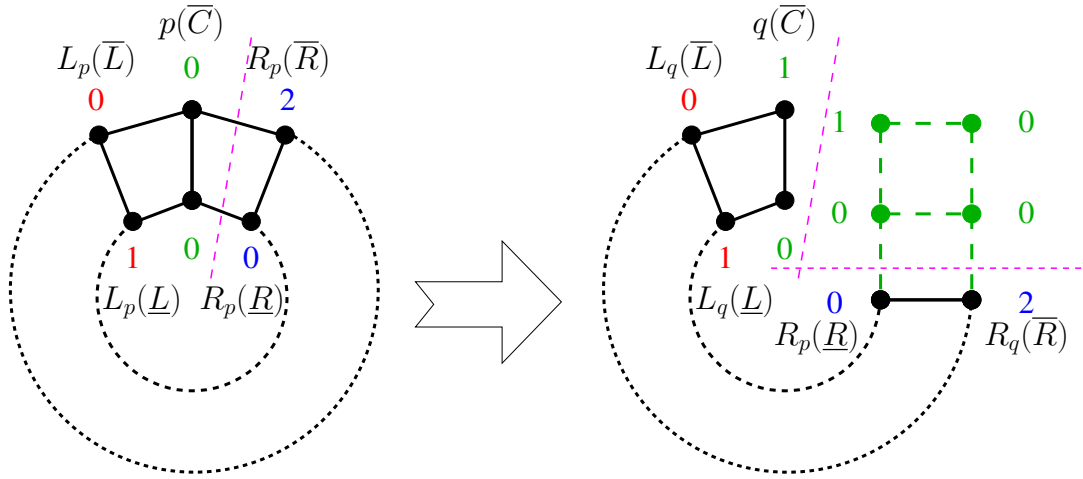}}
\caption{The numbers near rung $C$ and near the augmented part, which has dashed edges,  show how many pebbles we place at the corresponding vertex to get a solvable distribution.}
\label{kiegeszit}
\end{figure}

When $r=2$ assume for a contradiction that $\varrho_{\opt}(C_n \square P_2)=\varrho_{\opt}(P_{n-2}\square P_2)$, then $p$ fulfills the all properties. Delete the edges between $C$ and $R$ again, but now  augment the graph as shown see Fig. \ref{kiegeszit}. We place an extra pebble at $\overline{C}$ and another at the end of the new part. The augmented $P_{n+2}\square P_2$ graph is solvable under this distribution. Using Theorem \ref{letratetel} these imply:
\begin{align*}
2k+4=2(k+1)+2=\varrho_{\opt}(P_{n+2}\square P_2)&\leq \varrho_{\opt}(C_n \square P_2)+2= \\
&=\varrho_{\opt}(P_{n-2}\square P_2)+2=2k+3
\end{align*}
which is a contradiction. Hence 
$$\varrho_{\opt}(C_n \square P_2)\geq \varrho_{\opt}(P_{n-1} \square P_2)=2k.$$
\end{proof}

\section{Optimal rubbling number of the M\"obius-ladder}

\begin{theorem}
\label{Mobiustetel}
The M\"obius-ladder with length $n$ and the $n$-prism has the same optimal rubbling number. 
\end{theorem}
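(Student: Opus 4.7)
The proof splits into an upper and a lower bound; the plan is to re-run on the Möbius-ladder the same arguments that were used for $C_n\square P_2$ in Proposition \ref{all} and Theorem \ref{kortetel}.

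For the upper bound I would take the optimal distributions of the prism (which live on a ladder $P_{n-1}\square P_2$ or $P_{n-2}\square P_2$ obtained by deleting one or two rungs from $C_n\square P_2$) and place them unchanged on the Möbius-ladder. The sub-ladder sits inside the Möbius-ladder just as it sits inside $C_n\square P_2$, so every vertex of the sub-ladder is reachable by Theorem \ref{letratetel}. The only difference is the way the deleted rung(s) are reattached: in the prism the connection is parallel, in the Möbius-ladder it is twisted. The parallel-reachability property of the distributions of Figure \ref{felso}, already used in Proposition \ref{all}, in fact yields independent reachability of each left-end vertex with each right-end vertex, which is exactly what is needed to execute the strict rubbling moves across the twisted wraparound edges and reach the remaining rung(s).

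For the lower bound I would follow Theorem \ref{kortetel} step by step. First, an analog of Lemma \ref{nem2elerheto}: collapsing each rung of the Möbius-ladder produces $C_n$ (the resulting doubled edges do not affect rubbling), so if every rung were $2$-reachable we would get a $2$-solvable distribution of $C_n$ contradicting Theorem \ref{kor2tetel}, once the upper bound gives that the optimal rubbling number of the Möbius-ladder is less than $n$. Hence some rung $C$ is not $2$-reachable. Using the vertex-transitivity of the Möbius-ladder I can rotate the distribution until $C$ lies away from the twist, so that both neighboring rungs are joined to $C$ by ordinary parallel edges and the local picture near $C$ is identical to the prism case. Deleting $C$ (or both $C$ and $L$) then yields $P_{n-1}\square P_2$ (or $P_{n-2}\square P_2$), because the twisted wraparound edges disappear together with their endpoints, and the pebble-modification step of Lemma \ref{korlatok} (moving the pebbles of $\overline L$ and $\underline L$ to $\underline{L_2}$ and $\overline{L_2}$) is purely local and transfers unchanged.

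Hence the analog of Lemma \ref{korlatok} holds for the Möbius-ladder with the same statement, and the residue-class refinements of Theorem \ref{kortetel} -- the single-pebble-extension trick for $n\equiv 1\pmod 3$ and the $P_2\square P_2$-augmentation trick for $n\equiv 2\pmod 3$ -- are also local around $C$ and $R$ and apply verbatim. I expect the main obstacle to lie in the upper bound in the two-rung case: the two extra rungs of the Möbius-ladder link one corner of the sub-ladder to the diagonally opposite corner, so one must check that the explicit distributions of Figure \ref{felso} are symmetric enough that the relevant crossed pairs of corner vertices are independently reachable. This is a finite case analysis, but it is the only non-routine ingredient of the whole proof.
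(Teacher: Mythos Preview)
Your approach is essentially the paper's own: it simply asserts that the lower-bound argument of Theorem~\ref{kortetel} ``works without any change'' for the M\"obius-ladder and that the upper bound comes from the same ladder distributions as in the prism case, with the small cases ($n<6$) handled by an explicit figure.

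One minor confusion worth correcting: there is no distinguished ``twist location'' in the M\"obius-ladder. The graph is vertex-transitive, and \emph{every} rung is joined to its two neighboring rungs by parallel edges; the twist is a global, not local, phenomenon. So your sentence ``the twisted wraparound edges disappear together with their endpoints'' contradicts your earlier choice to place $C$ \emph{away} from the twist. What actually happens is that deleting any rung leaves two ladder pieces joined by the (surviving) twisted edges, and this graph is isomorphic to $P_{n-1}\square P_2$ by flipping one of the pieces upside down. Once that isomorphism is in hand, Lemma~\ref{korlatok} and the residue-class refinements transfer exactly as you claim, and your identification of the crossed-corner independent-reachability check as the only non-routine point in the upper bound is accurate.
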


\begin{proof}
Similarly like in the pebbling case \cite{ladder}, the proof for the
M\"obius-ladder works almost the same as in the $n$-prism case.

For the optimal distributions of small graphs see Fig.~\ref{M_kicsi}, it is not too difficult to show with a short case analysis that these are optimal.
The proof given for the lower bound of Theorem \ref{kortetel} works
for this theorem without any change.  The upper bound also comes from
the optimal pebbling distributions of $P_n\square P_2$ like in the prism case.
\end{proof}

\begin{figure}[htb]
\includegraphics[scale=0.6]{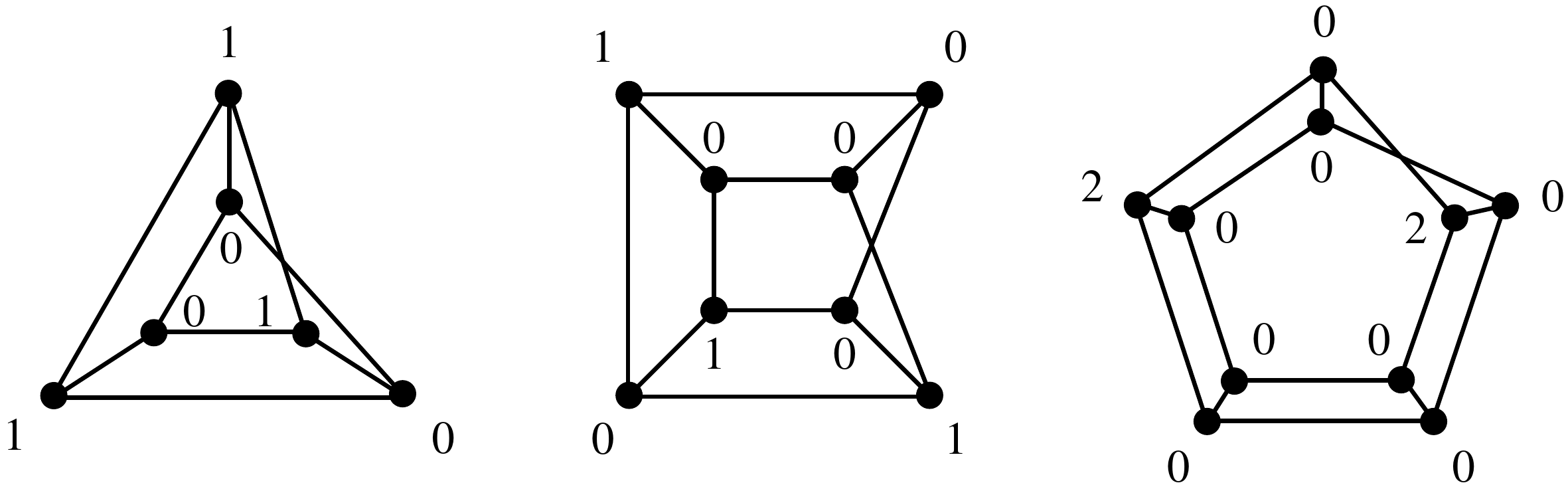}
\centering
\caption{Optimal pebbling distributions of the M\"obius-ladder when $n<6$.}
\label{M_kicsi}
\end{figure}


\newpage

\section*{Appendix: Reduction methods for the $p(R)\geq 4$ cases}
\setcounter{section}{0}

These reduction methods are used when the $R$ subgraph has at least 4 pebbles. We have shown a method already in the article for the case $p(\overline{l})+p(\overline{x})\geq 4$. Therefore we assume that $p(R)\geq 4$ and $p(\overline{l})+p(\overline{x})\leq 3$. We also suppose that the following inequalities hold:
\begin{itemize}
\item $p(\overline{l})\geq p(\overline{r})$
\item $p(\overline{l}) + p(\overline{x}) \geq p(\underline{l}) + p(\underline{x})$
 \item $p(\overline{l}) + p(\overline{x}) \geq p(\underline{r}) + p(\underline{x})$
\item $\mbox{max}(p(\overline{l}),p(\overline{x}))\geq\mbox{max}(p(\underline{l}),p(\underline{x}))  \mbox{ if }p(\overline{l}) + p(\overline{x}) = p(\underline{l}) + p(\underline{x})$
\end{itemize}
These inequalities simply state that none of the images of the $\{\overline{l},\overline{x}\}$ subgraph in $R$ contains more pebbles when we apply a nontrivial symmetry transformation.

\par During this appendix we show calculations that prove reduction methods. For simplicity  instead of $p(v)$ (the number of pebbles at vertex $v$) we only write $v$.

\subsection*{Reduction method I.}
Assume the following constraints:
\begin{itemize}
\item
$\overline{x}+\overline{l}\geq 2$ 
\item $\underline{x}+\underline{r}\geq 2$
\end{itemize}

\begin{figure}[hb]
\centering
\scalebox{0.8}{\input{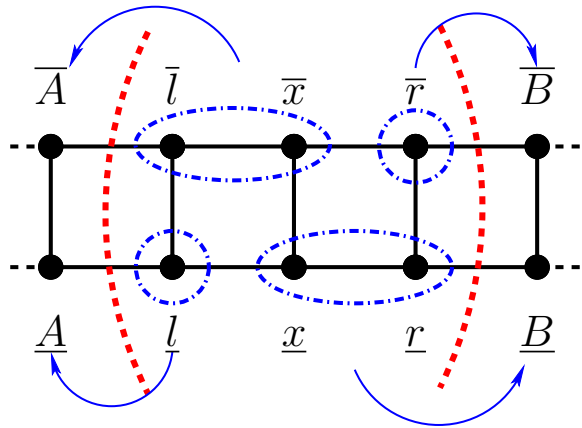}}
\caption{Reduction method I.}
\label{method_1}
\end{figure}

\par The method: 
\par Move the pebbles as shown on Fig.~\ref{method_1} then remove a pebble from $\overline{A}$ and an other one from $\underline{B}$. The fallowing calculations show that this is a good reduction:
$$R_{p^r}(\overline{A})-R_p(\overline{A})\geq \frac{1}{2}\overline{l}+\frac{3}{4}\overline{x}+\frac{3}{8}\overline{r}+\frac{1}{4}\underline{l}+\frac{1}{8}\underline{x}+\frac{3}{16}\underline{r}-1-\frac{1}{4}\geq$$
$$\geq \frac{1}{2}\underbrace{(\overline{l}+\overline{x}-2)}_{\geq 0}+\frac{1}{8}\underbrace{(\underline{x}+\underline{r}-2)}_{\geq 0}+\frac{1}{4}\underline{l}+\frac{3}{8}\overline{r}\geq 0$$  

$$R_{p^r}(\underline{A})-R_p(\underline{A})\geq \frac{1}{4}\overline{l}+\frac{3}{8}\overline{x}+\frac{3}{16}\overline{r}+\frac{1}{2}\underline{l}+\frac{1}{4}\underline{x}+\frac{3}{8}\underline{r}-\frac{1}{2}-\frac{1}{2}\geq$$
$$\geq \frac{1}{4}\underbrace{(\overline{l}+\overline{x}-2)}_{\geq 0}+\frac{1}{4}\underbrace{(\underline{x}+\underline{r}-2)}_{\geq 0}+\frac{1}{2}\underline{l}+\frac{3}{16}\overline{r}\geq 0$$ 

It is easy to see that the calculations for vertices $\overline{A}$ and $\underline{B}$ are the same, because the method is symmetric. The same holds for $\underline{A}$ and $\overline{B}$. 

\subsection*{Reduction method II.}
Assume that not all of the constrains of Method I. hold, but the following constraints hold:
\begin{itemize}
\item $p(R)\geq 5$
\item $\overline{x}+\overline{l}=3$
\item $\underline{x}+\underline{r}\geq 1$
\end{itemize}

\begin{figure}
\centering
\scalebox{0.8}{\input{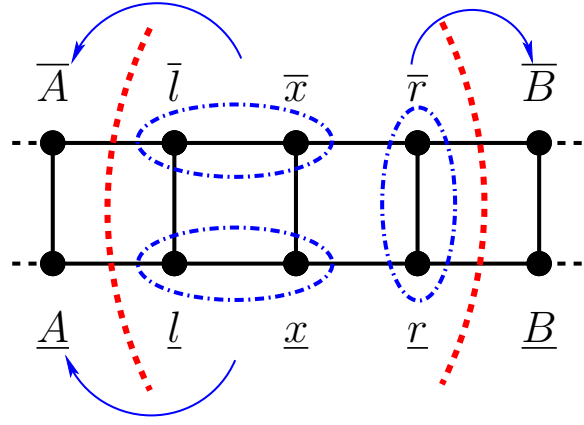}}
\caption{Reduction method II.}
\label{method_2}
\end{figure}

\par The method:
\par Move the pebbles as shown on Fig.~\ref{method_2} then remove two pebbles from $\overline{A}$.
$$R_{p^r}(\overline{A})-R_p(\overline{A})\geq \frac{1}{2}\overline{l}+\frac{3}{4}\overline{x}+\frac{3}{8}\overline{r}+\frac{1}{4}\underline{l}+\frac{3}{8}\underline{x}+\frac{7}{16}\underline{r}-2\geq$$
$$\geq \frac{1}{2}\underbrace{(\overline{l}+\overline{x}-3)}_{\geq 0}+\frac{1}{4}\underbrace{(\overline{r}+\underline{l}+\underline{x}+\underline{r}-2)}_{\geq 0}\geq 0$$  

$$R_{p^r}(\underline{A})-R_p(\underline{A})\geq \frac{1}{4}\overline{l}+\frac{3}{8}\overline{x}+\frac{3}{16}\overline{r}+\frac{1}{2}\underline{l}+\frac{3}{4}\underline{x}+\frac{1}{8}\underline{r}-1\geq$$
$$\geq \frac{1}{4}\underbrace{(\overline{l}+\overline{x}-3)}_{\geq 0}+\frac{1}{8}\underbrace{(\overline{r}+\underline{l}+\underline{x}+\underline{r}-2)}_{\geq 0}\geq 0$$ 

$$R_{p^r}(\overline{B})-R_p(\overline{B})\geq \frac{3}{8}\overline{l}+\frac{1}{4}\overline{x}+\frac{1}{2}\overline{r}+\frac{3}{16}\underline{l}+\frac{1}{8}\underline{x}+\frac{3}{4}\underline{r}-1\geq$$
$$\geq \frac{1}{4}\underbrace{(\overline{l}+\overline{x}-3)}_{\geq 0}+\frac{1}{8}\underbrace{(\overline{r}+\underline{l}+\underline{x}+\underline{r}-2)}_{\geq 0}\geq 0$$  

$$R_{p^r}(\underline{B})-R_p(\underline{B})\geq \frac{3}{16}\overline{l}+\frac{1}{8}\overline{x}+\frac{1}{4}\overline{r}+\frac{3}{8}\underline{l}+\frac{1}{4}\underline{x}-\frac{1}{2}\geq$$
$$\geq \frac{1}{8}\underbrace{(\overline{l}+\overline{x}-3)}_{\geq 0}+\frac{1}{4}\underbrace{(\overline{r}+\underline{l}+\underline{x}-1)}_{\geq 0}\geq 0$$  
\par
To show the last inequality: Since the contstrains of Method I. cannot hold now, we have $\underline{x}+\underline{r}\le 1$ which gives $\underline{r}\leq 1$. Now the pigeonhole principle implies that at least one of the vertices $\overline{r}$, $\underline{l}$, $\underline{x}$ contains a pebble under $p$.

\subsection*{Reduction method III.}
Assume that not all of the constrains of Method I. hold, not all of the constrains of Method II. hold, but the following constraints hold:
\begin{itemize}
\item $\overline{l}+\overline{x}=2$
\item $\underline{r}+\underline{x}\leq 1$
\item $\underline{l}+\underline{x}\geq 2$
\end{itemize}

\par
The method:
\par Move the pebbles as in method II (shown on Fig.~\ref{method_2}), but now we remove a pebble from both vertices of rung $A$.

$$R_{p^r}(\overline{A})-R_p(\overline{A})\geq \frac{1}{2}\overline{l}+\frac{3}{4}\overline{x}+\frac{3}{8}\overline{r}+\frac{1}{4}\underline{l}+\frac{3}{8}\underline{x}+\frac{7}{16}\underline{r}-1-\frac{1}{2}\geq$$
$$\geq \frac{1}{2}\underbrace{(\overline{l}+\overline{x}-2)}_{\geq 0}+\frac{1}{4}\underbrace{(\underline{l}+\underline{x}-2)}_{\geq 0}\geq 0$$  

$$R_{p^r}(\underline{A})-R_p(\underline{A})\geq \frac{1}{4}\overline{l}+\frac{3}{8}\overline{x}+\frac{3}{16}\overline{r}+\frac{1}{2}\underline{l}+\frac{3}{4}\underline{x}+\frac{1}{8}\underline{r}-1-\frac{1}{2}\geq$$
$$\geq \frac{1}{4}\underbrace{(\overline{l}+\overline{x}-2)}_{\geq 0}+\frac{1}{2}\underbrace{(\underline{l}+\underline{x}-2)}_{\geq 0}\geq 0$$ 

$$R_{p^r}(\overline{B})-R_p(\overline{B})\geq \frac{3}{8}\overline{l}+\frac{1}{4}\overline{x}+\frac{1}{2}\overline{r}+\frac{3}{16}\underline{l}+\frac{1}{8}\underline{x}+\frac{3}{4}\underline{r}-\frac{1}{2}-\frac{1}{4}\geq$$
$$\geq \frac{1}{4}\underbrace{(\overline{l}+\overline{x}-2)}_{\geq 0}+\frac{1}{8}\underbrace{(\underline{l}+\underline{x}-2)}_{\geq 0}\geq 0$$  

$$R_{p^r}(\underline{B})-R_p(\underline{B})\geq \frac{3}{16}\overline{l}+\frac{1}{8}\overline{x}+\frac{1}{4}\overline{r}+\frac{3}{8}\underline{l}+\frac{1}{4}\underline{x}-\frac{1}{2}-\frac{1}{4}\geq$$
$$\geq \frac{1}{8}\underbrace{(\overline{l}+\overline{x}-2)}_{\geq 0}+\frac{1}{4}\underbrace{(\underline{l}+\underline{x}-2)}_{\geq 0}\geq 0$$  

\subsection*{Reduction method IV.}
Assume that not all of the constrains of Method I. hold, not all of the constrains of Method II. hold, not all of the constrains of Method III. hold, but the following constraints hold:
\begin{itemize}
\item $\overline{l}+\overline{x}=2$
\item $\underline{r}+\underline{x}\leq 1$
\item $\overline{r}+\underline{r}\geq 2$
\item $p(R)\geq 5$
\end{itemize}

\par The method:
\par
Move the pebbles as in method II (shown on Fig.~\ref{method_2}),  remove one pebble from $\overline{A}$ and one pebble from $\overline{B}$.

$$R_{p^r}(\overline{A})-R_p(\overline{A})\geq \frac{1}{2}\overline{l}+\frac{3}{4}\overline{x}+\frac{3}{8}\overline{r}+\frac{1}{4}\underline{l}+\frac{3}{8}\underline{x}+\frac{7}{16}\underline{r}-1-\frac{1}{2}\geq$$
$$\geq \frac{1}{2}\underbrace{(\overline{l}+\overline{x}-2)}_{\geq 0}+\frac{3}{8}\underbrace{(\overline{r}+\underline{r}-2)}_{\geq 0}\geq 0$$  

$$R_{p^r}(\underline{A})-R_p(\underline{A})\geq \frac{1}{4}\overline{l}+\frac{3}{8}\overline{x}+\frac{3}{16}\overline{r}+\frac{1}{2}\underline{l}+\frac{3}{4}\underline{x}+\frac{1}{8}\underline{r}-\frac{1}{2}-\frac{1}{4}\geq$$
$$\geq \frac{1}{4}\underbrace{(\overline{l}+\overline{x}-2)}_{\geq 0}+\frac{1}{8}\underbrace{(\overline{r}+\underline{r}-2)}_{\geq 0}\geq 0$$ 

$$R_{p^r}(\overline{B})-R_p(\overline{B})\geq \frac{3}{8}\overline{l}+\frac{1}{4}\overline{x}+\frac{1}{2}\overline{r}+\frac{3}{16}\underline{l}+\frac{1}{8}\underline{x}+\frac{3}{4}\underline{r}-1-\frac{1}{2}\geq$$
$$\geq \frac{1}{4}\underbrace{(\overline{l}+\overline{x}-2)}_{\geq 0}+\frac{1}{2}\underbrace{(\overline{r}+\underline{r}-2)}_{\geq 0}\geq 0$$  

$$R_{p^r}(\underline{B})-R_p(\underline{B})\geq \frac{3}{16}\overline{l}+\frac{1}{8}\overline{x}+\frac{1}{4}\overline{r}+\frac{3}{8}\underline{l}+\frac{1}{4}\underline{x}-\frac{1}{2}-\frac{1}{4}\geq$$
$$\geq \frac{1}{8}\underbrace{(\overline{l}+\overline{x}-3)}_{\geq 0}+\frac{1}{4}\underbrace{(\underline{r}-1)}_{\geq 0}\geq 0$$  

The last inequality holds for the same reason as in the proof of method II.

\subsection*{Analysis of the case set where one of the methods works}
\begin{statement2}
The descirbed methods handle all cases when $p(R)\geq 5$. 
\end{statement2}
The inequalities of section 1 result that $3\geq \overline{l}+\overline{x}\geq 2$. Method I and II cover the cases when $\overline{l}+\overline{x}=3$. Furthermore Method I, III and IV cover the cases when $\overline{l}+\overline{x}=2$. 
\par The methods also work for many of cases when $p(R)=4$, but unfortunately not for all. 
\begin{figure}[ht]
\centering
\scalebox{0.45}{
\input{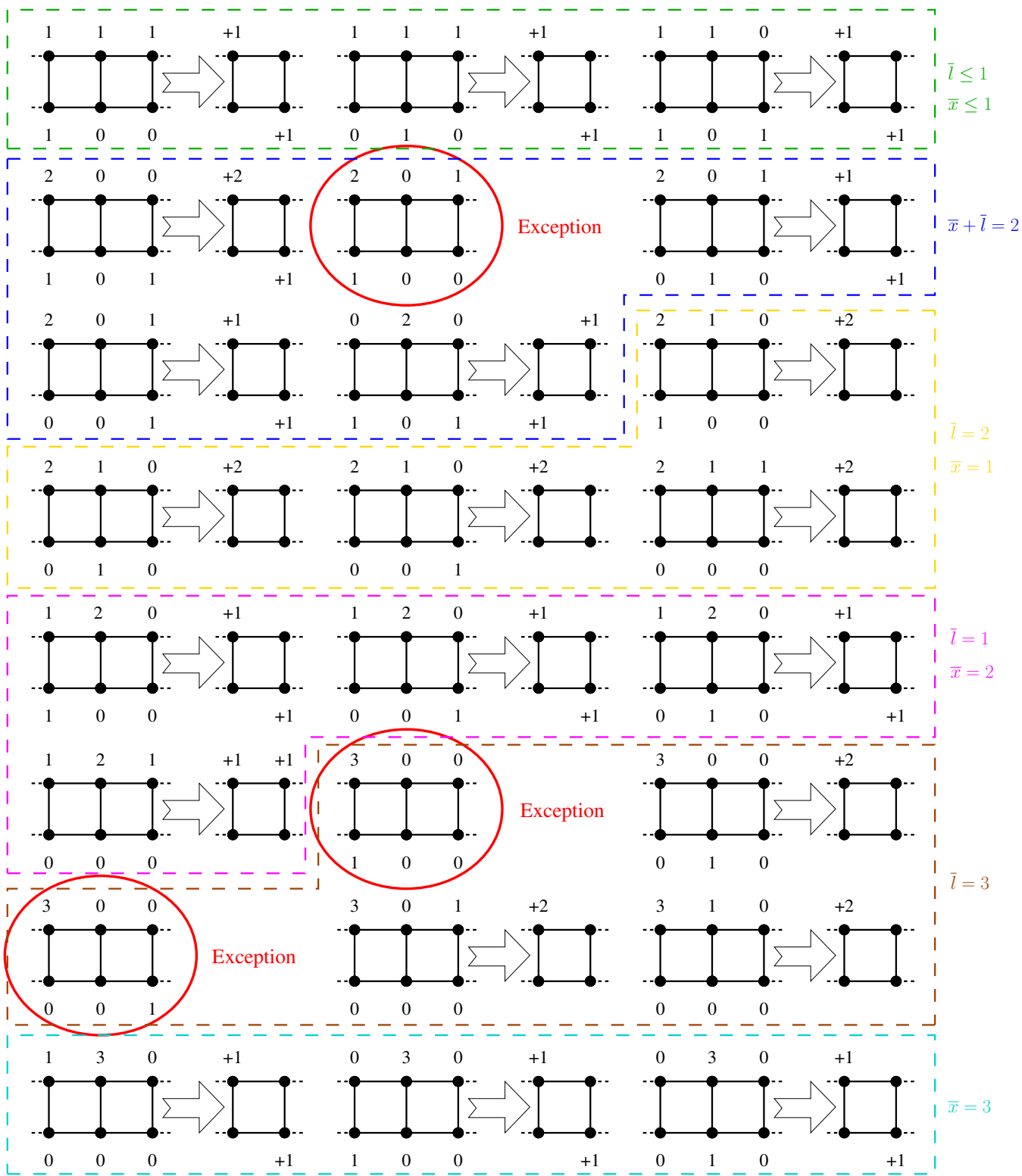}
}
\label{remaining}
\caption{Reduction methods for the remaining cases. The three case in the red circles are exceptions. The $P_3\square K_2$ graphs are the $R$ subgraphs before the reduction. The square graphs are the subgraphs conatining rungs $A$ and $B$.}

\end{figure}
\subsection*{Reduction methods for the remaining cases}

The remaining cases with their reduction method are shown on Fig.~\ref{remaining}. Three of these cases are exceptions, which doesn't have a reduction method that fulfills the modified inequalities for all the inspected four vertices. The figure shows the logic of the enumeration, hence it is easy to check that none of the possible cases are missing.


\section*{Acknowledgment}
Research is supported by the Hungarian National Research Fund (grant
number K108947). We thank the anonymous referee for the valuable
suggestions to improve the presentation of the paper.  \clearpage

\end{document}